\documentclass[12pt,reqno]{amsart}
\usepackage{bm}
\usepackage{graphicx}
\usepackage{adjustbox}
\usepackage{amsthm,amssymb,amsmath}
\usepackage[T1]{fontenc}
\usepackage[utf8]{inputenc}
\usepackage[colorlinks=true, linkcolor=blue, urlcolor=blue, citecolor=blue]{hyperref}
\usepackage{xcolor}
\usepackage{mathrsfs}
\usepackage{enumitem}
\usepackage{setspace}
\usepackage{float}
\usepackage{mathtools}
\usepackage[all,cmtip]{xy}
\usepackage{tikz-cd}
\usepackage{relsize}
\usepackage{extarrows}

\tikzcdset{row sep/normal=50pt, column sep/normal=50pt}

\newtheorem{lemma}{Lemma}[section]
\newtheorem{theorem}[lemma]{Theorem}
\newtheorem{proposition}[lemma]{Proposition}
\newtheorem{corollary}[lemma]{Corollary}
\theoremstyle{definition}
\newtheorem{definition}[lemma]{Definition}
\newtheorem{remark}[lemma]{Remark}
\newtheorem{example}[lemma]{Example}
\newtheorem{convention}[lemma]{Convention}
\numberwithin{equation}{subsection}

\DeclareMathOperator{\Gr}{Gr}
\DeclareMathOperator{\rk}{rk}

\DeclareMathOperator{\NE}{NE}
\DeclareMathOperator{\Nef}{Nef}

\newcommand{\Eff}{\mathrm{Eff}}

\newcommand{\R}{\mathbb{R}}

\newcommand{\cO}{\mathcal{O}}

\newcommand{\Aut}{\text{\rm Aut}}
\newcommand{\Gal}{\text{\rm Gal}}

\newcommand{\C}{\mathbb{C}}
\newcommand{\e}{E_{\ast}}

\newcommand{\p}{\widetilde{p}}

\newcommand{\Optwoo}{\mathcal{O}_{\mathrm{Gr}(r_{2}, E_{2\ast})}(1)}
\newcommand{\Oponee}{\mathcal{O}_{\mathrm{Gr}(r_{1}, E_{1\ast})}(1)}

\newcommand{\w}{\widetilde}
\newcommand{\grt}{{\textrm{Gr}}(r, \widetilde{E})}

\newcommand{\grp}{{\textrm{Gr}}(r, E_{\ast})}
\newcommand{\grpp}{{\emph{Gr}}(r, E_{\ast})}
\newcommand{\grtq}{{\textrm{Gr}}(q, \widetilde{E})}

\newcommand{\pS}{\Gr(r_1, E_1{_*})\,\times_X\, \Gr(r_2,E_2{_*})}
\newcommand{\wS}{\Gr(r_1,\widetilde E_1)\,\times_Y\, \Gr(r_2,\widetilde E_2)}
\newcommand{\we}{\Gr(r_1,\widetilde E_1)}
\newcommand{\wee}{\Gr(r_2,\widetilde E_2)}
\newcommand{\pae}{\Gr(r_1, E_1{_{\ast}})}
\newcommand{\paee}{\Gr(r_2, E_2{_{\ast}})}

\title{Positive Cones of Parabolic Grassmann Bundle over a curve}
\author{Ashima Bansal and Shivam Vats}
\date{}

\subjclass[2010]{14M15, 14C20, 14H60, 14F05}

\keywords {Parabolic Grassmann bundles, Parabolic vector bundles, Orbifold bundles, Nef cone, Mori cone.}

\begin{document}

\begin{abstract}
In this article, we define the parabolic Grassmann bundle associated to a parabolic vector bundle over a smooth projective variety, generalizing the construction of parabolic projective bundles developed in \cite{BL}. We determine its Néron--Severi group and compute its nef, pseudoeffective, and Mori cones over smooth projective curves. We also compute the corresponding cones for the fiber product of two parabolic Grassmann bundles over a smooth projective curve.
\end{abstract}

\maketitle

\section{Introduction}
The study of cones of divisors has become a fundamental topic in algebraic geometry. For a smooth irreducible complex projective variety $X$, two of the most important numerical invariants are the nef cone $\textrm{Nef}^{\,1}(X)$ and the pseudoeffective cone $\overline{\textrm{Eff}}^{1}(X)$. These cones encode significant information about the geometry of $X$ and have played a central role in the development of modern birational geometry. Moreover, they have important applications to positivity questions, interpolation problems, and Seshadri constants.

Miyaoka initiated the study of nef and pseudoeffective divisors on the projectivization $\mathbb{P}(E)$ of a vector bundle $E$ over a smooth projective curve \cite{Mi}. More generally, Fulger described the cone of effective $k$-cycles on $\mathbb{P}(E)$ in terms of the numerical data arising from the Harder--Narasimhan filtration of $E$  \cite{Ful}. In particular, his results recover and generalize Miyaoka's description of the nef and pseudoeffective cones of divisors.

Let $E$ be a vector bundle on an irreducible smooth projective curve $X$ defined over $\mathbb{C}$. The Grassmann bundle $\Gr(r,E)$, parametrizing all $r$-dimensional quotients of the fibres of $E$, provides a natural generalization of the projective bundle $\mathbb{P}(E)$. The nef cone and the pseudoeffective cone of $\Gr(r,E)$, for $1\,\le\, r\,\le \,\operatorname{rk}(E)-1,$ were subsequently computed in \cite{BP} and \cite{BHP}, respectively. 

More recently, in \cite{BHNN}, the authors  computed the Mori cone of $\Gr(r,E)$, for suitably chosen $r$. As an application they studied the Seshadri constants of ample line bundles on $\Gr(r,E)$. Under certain numerical assumptions on the Harder--Narasimhan filtration of $E$, they obtained explicit descriptions of these Seshadri constants.

The study of positivity properties was further extended in \cite{MR} to Grassmann bundles over higher-dimensional smooth projective varieties. Moreover, the authors determined the nef and pseudoeffective cones of the fiber product of two Grassmann bundles over a smooth projective curve.

Mehta and Seshadri introduced the notion of a parabolic bundle on a Reimann surface \cite{MS}. Maruyama and Yokogawa generalized it to higher dimensional varieties \cite{MY}. Then Biswas established a two-way relationship between the parabolic vector bundles and the orbifold vector bundles \cite{Bi1}. Given a parabolic vector bundle $E_{\ast}$, Biswas and Laytimi defined $\mathbb{P}(E_{\ast})$ in \cite{BL}, a parabolic analog of the projective bundle. This is done using the notion of ramified principal bundles (see \cite{BBN} and \cite{Bi2} for ramified principal bundles). They also construct the tautological line bundle $\mathcal{O}_{\mathbb{P}(E_{\ast})}(1)$ on $\mathbb{P}(E_{\ast})$. These provide important examples of finite group quotients which arise as finite quotients of the projectivization of orbifold bundles. Using the quotient description of parabolic projective bundles, the numerical groups of higher cycles were determined and explicit descriptions of the pseudoeffective and nef cones of $k$--cycles on parabolic projective bundles over smooth complex projective curves were obtained in \cite{BBM2}.

In view of these developments, it is natural to study the parabolic analogue of the Grassmann bundle and investigate its positivity properties. In Section~3, we generalize the construction in \cite{BL} to define parabolic Grassmann bundle $\textrm{Gr}(r, \e)$, for $r\,\in\,[1, \,\textrm{rank}(\e)-1]$, together with their associated tautological line bundle. As in the case of parabolic projective bundles, these varieties also arise as finite quotients of the Grassmann bundles associated to the corresponding orbifold bundles. Unlike classical Grassmann bundles, however, parabolic Grassmann bundles need not be smooth (see, Example~\ref{example}). Consequently, the quotient morphism from the corresponding orbifold Grassmann bundle need not be flat, so the usual flat pullback of cycles is therefore not available. Nevertheless, since parabolic Grassmann bundles arise as finite quotients of smooth varieties, we apply Fulton's notion of special pullbacks for finite quotient morphisms \cite{Fu}.

Our first main result provides an explicit description of the Néron--Severi group of a parabolic Grassmann bundle over a smooth projective curve,  Proposition \ref{prop:N1}.
Using only the numerical data associated to the Harder--Narasimhan filtration of $\e$, we compute the a rational number $\theta_{\e, r}$ (see \eqref{thetap}), where $r\,\in\, [1, \textrm{rank}(\e)\,-\,1]$. The Lemma \ref{positivity}, shows that $\theta_{\e ,r}$ controls the positivity of the tautological line bundle $\mathcal{O}_{\textrm{Gr}(r, \e)}(1)$. Using this $\theta_{\e, r}$, we compute the Nef cone of $\textrm{Gr}(r, \e)$, Theorem \ref{thm:nef-cone}.

Further, in Section~3, we determine the pseudoeffective cone of the parabolic Grassmann bundle \textrm{Gr}(r, $E_{\ast}$). In contrast to the description obtained in \cite{BHNN}, we establish another description of the Mori cone of the Grassmann bundle \textrm{Gr}($q, \widetilde E$) over a smooth projective curve, under a suitable assumption on the Harder--Narasimhan filtration of $\widetilde E$; see Lemma~\ref{lem:orbifold-mori}.
This description plays a crucial role in our computation of the Mori cone of the parabolic Grassmann bundle \textrm{Gr}$(q,E_{\ast})$, which is carried out in Theorem~\ref{thm:mori-parabolic}.

In Section~4, we study the nef and pseudoeffective cones of the fiber product of two parabolic Grassmann bundles over a smooth complex projective curve $X$. Via the correspondence between parabolic and orbifold bundles, this variety arises as a finite quotient of the fiber product of the corresponding Grassmann bundles equipped with the diagonal action of a finite group. Let $E_{1_{\ast}}$ and $E_{2_{\ast}}$ be parabolic bundles on $X$, and let $\widetilde E_1$ and $\widetilde E_2$ denote the corresponding orbifold bundles. Further we compute the Mori cone of the fiber product $\textrm{Gr}(r_1, \widetilde {E}_1)\,\times\,
\textrm{Gr}(r_2, \widetilde E_2),$ for suitably chosen integers $r_1$ and $r_2$ (Proposition~\ref{morif}).  As an application, we determine the Mori cone of the fiber product of the corresponding parabolic Grassmann bundles; see Theorem~\ref{morifp}.

\section{Preliminaries}
This section recalls Chow groups, numerical groups, the associated closed cones, and the theory of orbifold and parabolic bundles that will be used throughout the paper (see \cite{Fu,FL2, Bi1} for further details).

\subsection{Chow groups and Numerical groups}

Throughout this subsection, $X$ is a complex projective variety of dimension $n$ not necessary smooth.

\subsubsection{Cycles and Chow groups}

Let $Z_{k}(X)$ denote the group of $k$--cycles on $X$ with coefficients in $\mathbb R$. Any subscheme $Z\,
\subset\, 
X$ of dimension $k$ has a fundamental cycle denoted by $[Z]\,\in\,{Z}_{k}(X)$ defined as in \cite[\S~1.5]{Fu}. 
To study the geometry of cycles on $X$, various equivalence relations have been 
introduced on $Z_{k}(X)$. One example is rational equivalence; see \cite[\S~1.3 and \S~1.6]{Fu}.
The Chow group $CH_{k}(X)$ is the quotient of ${Z}_{k}(X)$ modulo rational equivalence, which may
still have infinite rank. When $X$ is smooth, denote $CH^{k}(X) \,=\, CH_{n-k}(X)$.
There is a graded ring structure on $CH^{\ast}(X)\,:=\, \bigoplus_{k\geq 0} CH^{k}(X)$ \cite[\S~8]{Fu}. 

Take a vector bundle $E$ on $X$. For every $k$ and $m$, there exists a linear map
$$CH_{m}(X)\ \, \xrightarrow{\,\,\,c_{k}(E)\,\cap\,_{-}\,\,\,}\ \,CH_{m-k}(X),$$ 
where $c_{k}(E)$ denote the $k$-th Chern class \cite[\S~3.2]{Fu}.

Let 
$
A^{\ast}(X)\,=\,\bigoplus_{k\geq 0} A^{k}(X)
$ denote the operational Chow ring of $X$ as defined in
\cite[Chapter 17, Definition 17.3]{Fu}. Throughout this paper, we work with operational Chow groups with real coefficients; that is, 
$
A^{k}(X)\,:=\,A^{k}(X)_{\mathbb R}
\,=\,
A^{k}(X)\,\otimes_{\mathbb Z}\,\mathbb R.
$ The operational Chow ring $A^{\ast}(X)$ is an associative graded ring with unity $1$, whose degree $k$ component is the $k$--th operational Chow group $A^{k}(X)$. If $X$ is a smooth projective variety, then there is a canonical isomorphism $
A^{\ast}(X)\,\cong\,CH^{\ast}(X).$

\subsubsection{Chow ring of a finite group quotient}
Let $G$ be a finite group acting on a smooth projective variety $X$, and let $Y\,=\,X/G$ be the quotient variety. Denote by $\pi\,:\,X\,\longrightarrow\,Y$ the finite quotient map.
Let ${CH}_{\ast}(X)^{G}$ denote the ring of $G$-invariants of ${CH}_{\ast}(X)$. Then there is a canonical group isomorphism ${CH}_{\ast}(Y)\, =\, {CH}_{\ast}(X)^{G}$.
For any subvariety $W$ of $X$, let 
\begin{equation*}
I_{W}\ =\ \{g\,\in\,G\,\,\big\vert\,\, g_{|_W}\,=\, id_{W}\}    
\end{equation*}
be the inertia group, and let 
\begin{equation*}
e_{W}\,=\, \textrm{card}(I_{W})/\textrm{deg}_{i}(W/V),    
\end{equation*}
where $V\,=\, \pi(W)$, and ${\rm deg}_{i}(W/V)$ is the degree of inseparability of $K(W)$ over $K(V)$, the function fields of $W$ and $V$, respectively. We recall from \cite[Example 1.7.6]{Fu} that, for a subvariety $V$ of $Y$, the special pullback
\begin{equation}\label{special}
\pi_{\textrm{sp}}^{\ast}[V]\,=\, \sum\, e_{W} [W],    
\end{equation}
where the sum over all irreducible components $W$ of $\pi^{-1}(V)$. This determines an isomorphism $Z_{\ast}(Y)\,=\, Z_{\ast}(X)^{G}$, and ${CH}_{\ast}(X)^{G}$ is the quotient of $Z_{\ast}(X)^{G}$ modulo the subspace generated by
\begin{equation*}
\left\{\sum_{g\,\in\,G}g_{\ast}[\textrm{div}(r)]\,\,\big\vert\,\,r\,
\in\,K(W)^{\ast},\,\,\, W\,\subset\,X\right\}.    
\end{equation*}
Note that the composition of maps
\begin{equation}\label{comp}
{CH}_{\ast}(Y)\,\xlongrightarrow{\,\,\pi_{\textrm{sp}}^{\ast}\,\,}\, {CH}_{\ast}(X)^{G}\,
\hookrightarrow\,{CH}_{\ast}(X)\,\xlongrightarrow{\,\,\pi_{\ast}\,\,}\, {CH}_{\ast}(Y)     
\end{equation}
is multiplication by card($G$). In addition, ${CH}_{\ast}(Y)$ may also be made into a ring. Indeed, in
this case, one has an isomorphism
\begin{equation*}
 {CH}_{\ast}(Y)\ =\ {\rm CH}_{\ast}(X)^{G},   
\end{equation*}
so ${CH}_{\ast}(Y)$ is the ring of $G$-invariants of ${CH}_{\ast}(X)$.

In fact, if $V,W$ are subvarieties of $Y$, one may construct a refined intersection class $V\cdot W$ in ${CH}_{\ast}(V\,\cap\,W)$ defined as 
\begin{equation*}
V\cdot W \ =\ (1/|G|)\eta_{\ast}(\pi_{\textrm{sp}}^{\ast}[V]\cdot \pi_{\textrm{sp}}^{\ast}[W]),   
\end{equation*}
where $\eta$ is the projection from $\pi^{-1}(V\,\cap\,W)$ to $V\,\cap\, W$. Note that $\pi_{\textrm{sp}}^{\ast}(a\cdot b)\,=\, \pi_{\textrm{sp}}^{\ast}(a)\cdot \pi_{\textrm{sp}}^{\ast}(b)$,\,\, \,\textrm{and}\,\,\,$\pi_{\ast}(\pi_{\textrm{sp}}^{\ast}(a)\cdot c)\,=\, a\cdot \pi_{\ast}(c)$, for cycles $a,b$ on $Y$, and cycle $c$ on $X$.

The canonical homomorphism 
\begin{equation*}
A^{\ast}(Y)\,\xlongrightarrow{\cap\,[Y]}\, CH_{\ast}(Y)    
\end{equation*}
is an isomorphism of rings. This shows in particular that the ring structure on $CH_{\ast}(Y)$ is independent of $X$, so produces a  pull-back homomorphism for arbitrary morphisms of such varieties. See \cite[Examples 8.3.12 and 17.4.10]{Fu} for more details.
\subsubsection{Numerical equivalence}
To define numerical equivalence, we work with an equivalence relation that is coarser than rational equivalence.

\textbf{Smooth case.}
When $X$ is smooth, there is an intersection pairing
\begin{equation}\label{e1}
CH_{k}(X)\,\times\, CH^{k}(X)
\,\longrightarrow\,
\mathbb{R}
\end{equation}
determined by the ring structure on $CH^{\ast}(X)$ and the natural point counting degree function $\deg\,:\,CH_{0}(X)\,\longrightarrow\, \mathbb{R}.$ The \textit{numerical group} $N_{k}(X)$ is defined as the quotient of $CH_{k}(X)$ by the kernel of this pairing; denote $N^{k}(X)\,:=\,N_{n-k}(X)$. The pairing in \eqref{e1} induces a perfect pairing $N_{k}(X)\,\times\, N^{k}(X)\,\longrightarrow\, \mathbb{R},$ in particular, we have $N^{k}(X)\,\cong\, (N_{k}(X))^{\vee}.$

\medskip

\textbf{Singular case.}
When $X$ is singular, we do not have an intersection pairing. Instead, the Chern class action can be used. A $k$--cycle
$Z$ is called \emph{numerically trivial} --- denoted by $Z\,\equiv\, 0$ --- if
\[
\deg(P\,\cap\, [Z]_{\mathrm{Chow}})\,=\,0
\]
for any weight $k$ polynomial $P$ in Chern classes of vector bundles on $X$.
The \textit{numerical group} is then defined by
\[
N_{k}(X)\,:=\,CH_{k}(X)/\equiv.
\]
It is a real vector space of finite dimension, and it is nonzero only when $0\,\leq\, k\,\leq\, \textrm{dim}\,X\,=\, n.$ The class in $N_{k}(X)$ of a real $k$--cycle $Z$ is denoted by $[Z]$. Clearly, both
$N_{0}(X)$ and $N_{n}(X)$ are isomorphic to $\mathbb{R}$.

The \textit{dual numerical group} is defined by $N^{k}(X)\,:=\,(N_{k}(X))^{\vee}$ is no longer isomorphic to $N_{n-k}(X)$. Equivalently, it can be defined as follows:

\begin{equation}\label{upper numerical groups}
{N}^{k}(X)\ = \ \frac{\text{Homogeneous Chern }\, \mathbb{R}\text{--polynomials $P$ of
weight }\, k}{\text{Chern polynomials }\, P\, \text{ such that }\, P\,\cap\,
\alpha \,=\,0\, \text{ for all}\, \alpha\,\in\, N_{k}(X)}.
\end{equation}
The multiplication of Chern polynomials endows $N^{\ast}(X)$ with the structure of a graded ring.
Moreover, the action of Chern classes induces maps
\[
N^{k}(X)\times N_{m}(X)
\longrightarrow
N_{m-k}(X),
\qquad
(P,\alpha)\longmapsto P\cap\alpha,
\]
or simply,
\[
P\cdot\alpha.
\]
The action on the fundamental class $[X]$ yields a natural map
\begin{equation}\label{cyclefication}
\phi\,:\,
N^{k}(X)
\,\longrightarrow\,
N_{n-k}(X),
\qquad
P\,\longmapsto\, P\,\cap\, [X],
\end{equation}
which is, in general, not an isomorphism.
However, for $k\,=\,1$, the map $\phi\,:\,N^{1}(X)\,\longrightarrow\, N_{n-1}(X)$ is injective; see \cite[Example 19.3.3]{Fu}. Dually, $N^{n-1}(X)\,\longrightarrow\, N_{1}(X)$ is onto. More generally, $N_{\ast}(X)$ is a module over $N^{\ast}(X)$.

For notational convenience, whenever no confusion is likely to arise, we shall denote
\[
P\,\cap\,[X]\,\in\, N_{n-k}(X)
\]
simply by $P$.

\begin{remark}\cite[Example~2.1]{FL2}.
The space $N^{1}(X)$ is the real vector space of first Chern classes of invertible sheaves modulo those having vanishing intersection against every curve. Throughout, we identify an invertible sheaf with the corresponding rational equivalence class of Cartier divisors. 
\end{remark}

\begin{remark}\label{pdecent}
The relation of numerical equivalence is coarser than rational equivalence. Hence the natural quotient map $CH_{\ast}(X)\,\longrightarrow\, N_{\ast}(X)$ is well defined.

Furthermore, if
\[
\pi\,:\,X\,\longrightarrow\, Y
\]
is a proper morphism of projective varieties, then the proper pushforward on Chow groups descends to numerical groups, giving a commutative diagram
\begin{equation}\label{decent}
\begin{tikzcd}
CH_{\ast}(X) \arrow[r] \arrow[d,"\pi_{\ast}"']
&
N_{\ast}(X) \arrow[d,"\pi_{\ast}"]
\\
CH_{\ast}(Y) \arrow[r]
&
N_{\ast}(Y).
\end{tikzcd}
\end{equation}
\end{remark}

\subsection{Positive cones}

\subsubsection{The pseudoeffective cone}

A say that a class $\alpha\,\in\, N_{k}(X)$ is \emph{effective} if $\alpha\,=\,[Z]$ for some effective cycle $Z$. This notion is closed under positive linear combinations, and hence it is natural to consider the following:

\begin{definition}
The closure of the convex cone generated by effective $k$--cycles in $N_{k}(X)$ is denoted by $\overline{\Eff}_{k}(X).$ It is called the \emph{pseudoeffective cone}. A class $\alpha\,\in\, N_{k}(X)$ is called \emph{pseudoeffective} if $\alpha\,\in\,\overline{\Eff}_{k}(X).$ 
\end{definition}

Note that $\overline{\Eff}_{1}(X)$ is referred to as the \emph{Mori cone} (or the \emph{closed cone of curves}), which is also denoted by $\overline{\NE}(X)$ in the literature.

\begin{definition}
A class $\beta\,\in\, N^{k}(X)$ is called \emph{pseudoeffective} if $\phi(\beta)\in \overline{\Eff}_{n-k}(X),$ where $\phi$ is the map in \eqref{cyclefication}. The pseudoeffective dual classes form a closed cone in $N^{k}(X)$, and denoted by $\overline{\Eff}^{\,k}(X).$
\end{definition}

\subsubsection{The nef cone}

Recall the perfect pairing  $N^{k}(X)\,\times\, N_{k}(X)
\,\longrightarrow\,
\mathbb{R}$ defined by $(P,\alpha)\,\longmapsto\, P\,\cap\,\alpha.$

\begin{definition}
The \emph{nef cone} $\Nef^{\,k}(X)\,\subset\, N^{k}(X)$ is the dual cone of $\overline{\Eff}_{k}(X)\,\subset\, N_{k}(X)$ with respect to the above pairing.
\end{definition}
By definition, nefness is preserved under proper pullbacks. If $X$ is smooth, a cycle $\alpha\,\in\,N^{k}(X)$ is nef if and only if $(\alpha\cdot \beta)\,\geq\, 0$ for all effective cyles $\beta$ of dimension $k$.

For the purposes of this article, we only require $\textrm{Nef}^{1}(X)$. For simplicity, we denote $\textrm{Nef}^{1}(X)$ by $\textrm{Nef}(X)$.

\subsection{Orbifold bundle}
Let $Y$ be a smooth complex projective variety of dimension $n$. Its group of algebraic automorphisms will be 
denoted by $\textrm{Aut}(Y).$ Let $\Gamma\, \subset\, \textrm{Aut}(Y)$ be a finite subgroup. So $\Gamma$
acts on $Y$ through algebraic automorphisms.

An \textit{orbifold bundle} on $Y$, with $\Gamma$ as the \textit{orbifold group}, is a vector bundle $V$ on 
$Y$ together with a lift of the action of $\Gamma$ on $Y$ to $V$, i.e., $\Gamma$ acts on the total space of 
$V$ such that the action of any $g\,\in \,\Gamma$ gives a vector bundle isomorphism between $V$ to 
$(g^{-1})^{\ast}V$. A subsheaf $F$ of an orbifold bundle $V$ is called an \textit{orbifold subsheaf} if 
the action of $\Gamma$ on $V$ preserves $F$.

Fix a polarization on $Y$ that is preserved by the action of $\Gamma$.
An orbifold bundle $V$ on $Y$ is called \textit{orbifold semistable} (respectively, \textit{orbifold 
stable}) if for any orbifold subsheaf $F$ of $V$, with $0\,<\,\textrm{rank(F)}\,<\,\textrm{rank(V)}$, the
following inequality holds:
\begin{equation*}
\frac{\textrm{deg}(F)}{\textrm{rank} (F)} \,\leq\, \frac{\textrm{deg} (V)}{\textrm{rank} (V)}\,\, \
\left(\textrm{respectively,}\,\, \frac{\textrm{deg} (F)}{\textrm{rank} (F)} \,<\,
\frac{\textrm{deg} (V)}{\textrm{rank} (V)}\right).
\end{equation*}

\begin{proposition}[{\cite[Lemma 2.7]{Bi1}}]\label{orbss}
An orbifold bundle $V$ is orbifold semistable if and only if it is semistable in the usual sense.
\end{proposition}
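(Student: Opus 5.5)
The implication ``semistable $\Rightarrow$ orbifold semistable'' is immediate. An orbifold subsheaf $F\subset V$ is in particular a coherent subsheaf of $V$. So if $V$ is semistable in the usual sense, then $\deg(F)/\mathrm{rank}(F)\,\le\,\deg(V)/\mathrm{rank}(V)$ holds for every subsheaf, and a fortiori for every $\Gamma$-invariant one. Here degrees are computed with respect to the fixed $\Gamma$-invariant polarization, which is the same polarization used in both notions.

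For the converse, assume $V$ is orbifold semistable and suppose, for contradiction, that $V$ is not semistable. I would use the Harder--Narasimhan filtration of $V$, and more specifically its first term, the maximal destabilizing subsheaf $V_1\subset V$. This is the unique subsheaf of maximal slope $\mu_{\max}(V)$ that is maximal with respect to inclusion among subsheaves of that slope, and it satisfies $\mu(V_1)\,>\,\mu(V)$ and $0<\mathrm{rank}(V_1)<\mathrm{rank}(V)$. The key step is to show that $V_1$ is $\Gamma$-invariant. For $g\in\Gamma$, the lifted action gives an isomorphism $V\cong (g^{-1})^{\ast}V$, and under it $V_1$ corresponds to a subsheaf $g\cdot V_1:=(g^{-1})^{\ast}V_1\subset V$.

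Since $g$ is an automorphism of $Y$ preserving the polarization $H$, pullback by $g^{-1}$ preserves ranks. It also preserves degrees, because $c_1((g^{-1})^{\ast}F)\cdot H^{n-1} = (g^{-1})^{\ast}(c_1(F)\cdot H^{n-1})$, and degrees of zero-cycles are invariant under automorphisms. Hence $g\cdot V_1$ has the same rank and slope as $V_1$. For the same reason, pullback by $g^{-1}$ preserves the property of being maximal among subsheaves of maximal slope, since it is a bijection on subsheaves of $V$ preserving slopes and inclusions. By uniqueness of the maximal destabilizing subsheaf, $g\cdot V_1=V_1$. Thus $V_1$ is an orbifold subsheaf with $\mu(V_1)>\mu(V)$, contradicting orbifold semistability.

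The main point needing care is this invariance argument, specifically that the polarization is $\Gamma$-invariant, so that slopes are preserved. Once that is in place, uniqueness of the Harder--Narasimhan filtration does the rest. If one prefers, the uniqueness step can be replaced by an alternative argument: if $V_1$ and $g\cdot V_1$ were distinct, then the image of $V_1\oplus g\cdot V_1\to V$ would be a subsheaf whose slope is at least $\mu_{\max}$ but which strictly contains $V_1$, contradicting maximality. This is the standard argument behind uniqueness anyway.
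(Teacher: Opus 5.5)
Your proof is correct. The paper gives no argument of its own and only cites \cite[Lemma 2.7]{Bi1}; your route is the standard argument behind that citation: the easy direction by restricting the slope inequality to $\Gamma$-invariant subsheaves, and the converse by using uniqueness of the maximal destabilizing subsheaf together with $\Gamma$-invariance of the polarization to show that this subsheaf is an orbifold subsheaf. (In your optional alternative argument, the image of $V_1\oplus g\cdot V_1$ need not strictly contain $V_1$ when $g\cdot V_1\subsetneq V_1$; that case is excluded because then $V_1\subsetneq g^{-1}\cdot V_1$, which contradicts maximality, so your main uniqueness argument is unaffected.)
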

\subsection{Parabolic vector bundles.} 
Let $X$ be an irreducible smooth projective variety defined over $\C$. A \textit{parabolic vector bundle} on $X$ with a divisor $D$ is a vector bundle $E$ on $X$ together with filtrations of its restrictions to the components of $D$, each equipped with strictly increasing weights in $[0,1)$ (see \cite{MY} and \cite{Se}).

The notions of subsheaf, quotient, direct sum, tensor product, dual, symmetric product, and exterior powers for vector bundles extend naturally to parabolic vector bundles. In addition, we have the notions of semistability 
and stability for parabolic vector bundles (see \cite{MY}, \cite{Bi2}, \cite{Yo}).

Every parabolic sheaf $\e$ has the \textit{Harder-Narasimhan filtration}, i.e., there exists a unique filtration 
\begin{equation}\label{HN}
E\, =\, E^{m}\,\supsetneq\, E^{m-1}\, \supsetneq\, \cdots\,\supsetneq\, E^{0}\, = \,0
\end{equation}
such that all $(E^{i}/E^{i-1})_{\ast}$ with the induced quotient
parabolic structure are parabolic semistable and \textrm{par-}$\mu((E^{i}/E^{i-1})_{\ast})\, >\, $ \textrm{par-}$\mu((E^{i+1}/E^{i})_{\ast})$ for all $i\,\in\, \{1,\, 2,\, \cdots,\, m\}$. Let
\begin{equation*}
Q^{i}_\ast\ \,:=\, \ \left( \frac{E^{i}}{E^{i-1}} \right)_{\ast}
\textnormal{be the quotient parabolic sheaf}.
\end{equation*}
Let
\begin{equation}\label{parrank}
r_i\, =\, \operatorname{rk}(Q^{i}_{\ast}), \quad d_i = \textrm{par-}\textrm{deg} \,(Q^{i}_{\ast}),\quad \mu_i\, :=\, \textrm{par-}\mu(Q^{i}_{\ast})\, =\, \frac{d_i}{r_i}
\end{equation}
be the rank, parabolic degree and parabolic slope of $Q^{i}_{\ast}$, respectively.
Hence, we have
\begin{equation*}
\mu_{m}\,<\, \mu_{2}\,<\, \cdots < \mu_{1}.
\end{equation*}
\subsection{Correspondence between  orbifold bundles and parabolic bundles }

Let $Y$ be a smooth projective variety equipped with a faithful action of a finite group $\Gamma$, and let
\begin{equation*}
p\ :\ Y \ \longrightarrow \ Y/\Gamma\ \,=:\,\ X
\end{equation*}
be the quotient map, such that $X$ is smooth. As described in \cite{Bi1}, there is an equivalence of categories between parabolic vector bundles on $X$ (with rational weights determined by the ramification data of $p$) and $\Gamma$--equivariant vector bundles on $Y$. Under this correspondence, a parabolic bundle $\e$ on $X$ corresponds to a $\Gamma$--bundle $\widetilde{E}$ on $Y$, and we have:
\begin{equation*}
\textrm{deg}(\widetilde{E})\, =\, |\Gamma|. \textrm{par-}\textrm{deg}(\e) \qquad \text{ and } \qquad \textrm{rk}(E_{\ast})\, =\, \textrm{rk}(\widetilde{E}), 
\end{equation*}
where $|\Gamma|$ denotes the cardinality of the finite group $\Gamma$.

\begin{proposition}[{\cite[Lemma 3.16]{Bi1}}]\label{semistable} The orbifold bundle
$\widetilde{E}$ is orbifold semistable if and only if $\e$ is parabolic semistable.
\end{proposition}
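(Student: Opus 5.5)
The plan is to pass to the $\Gamma$-invariant part via the correspondence of \cite{Bi1}, and then apply Proposition~\ref{orbss}. First note that Biswas's equivalence between parabolic bundles on $X$ and $\Gamma$-bundles on $Y$ is an equivalence of exact-type categories. It is compatible with subobjects: parabolic subsheaves $F_{\ast}\subset \e$, with the induced parabolic structure, correspond bijectively to $\Gamma$-invariant (orbifold) subsheaves $\widetilde F\subset\widetilde E$. In one direction this is $F\mapsto (p^{\ast}F)$ saturated and twisted according to the weights. In the other it is $\widetilde F\mapsto (p_{\ast}\widetilde F)^{\Gamma}$ with the parabolic filtrations read off from the isotropy actions along the ramification divisor.

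Next, compare numerical invariants under this correspondence. Ranks agree trivially. For degrees, $\deg(\widetilde F)=|\Gamma|\cdot\textrm{par-}\deg(F_{\ast})$, as recalled above for $\e$; this holds equally for any parabolic sheaf, computed with respect to the polarization $p^{\ast}H$ on $Y$, which is $\Gamma$-invariant. Consequently
\[
\mu(\widetilde F)\,=\,|\Gamma|\cdot \textrm{par-}\mu(F_{\ast}),
\]
and the same identity holds for $\widetilde E$ versus $\e$. Since $|\Gamma|>0$ is a common constant, the inequality $\mu(\widetilde F)\le\mu(\widetilde E)$ (resp.\ strict) is equivalent to $\textrm{par-}\mu(F_{\ast})\le \textrm{par-}\mu(\e)$ (resp.\ strict). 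Combined with the bijection of subobjects, this shows that $\widetilde E$ is orbifold semistable if and only if $\e$ is parabolic semistable.

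The main obstacle is the first step. One must check that the bijection really respects saturation and induced parabolic structures, so that every orbifold subsheaf arises from a parabolic subsheaf and vice versa. One must also check that the degree formula holds for subsheaves that are not locally free, at least in codimension one. I would handle this by reducing to saturated subsheaves, which are reflexive and hence locally free off codimension two. Since degrees only see codimension one, the computation then localizes to a neighbourhood of the generic points of the components of $D$. There the local model $z\mapsto z^{N}$ lets one verify directly that the invariant direct image of $\widetilde F$ carries the weights $\alpha_j=k_j/N$, and that the degree contribution is $\sum \alpha_j\dim(\textrm{graded pieces})$, times $|\Gamma|$ after pulling back. If desired, one can finally append Proposition~\ref{orbss} to conclude that parabolic semistability of $\e$ is also equivalent to ordinary semistability of $\widetilde E$.
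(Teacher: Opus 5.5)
The paper gives no proof of its own for this statement; it simply cites \cite[Lemma 3.16]{Bi1}. Your argument is correct and is essentially the standard proof behind that citation: you match saturated $\Gamma$-invariant subsheaves $\widetilde F\subset\widetilde E$ with saturated subsheaves $F\subset E$ carrying the induced parabolic structure, and use $\deg_{p^{\ast}H}(\widetilde F)=|\Gamma|\cdot\textrm{par-}\deg(F_{\ast})$. One correction: the subobject bijection you assert in your first step holds only for saturated subsheaves, since distinct non-saturated $\Gamma$-subsheaves can have the same invariant direct image; this is harmless because saturating only raises degree (respectively parabolic degree) on both sides, and you do restrict to saturated subsheaves later, while Proposition~\ref{orbss} plays no role in this particular equivalence.
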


\begin{remark}\label{allss}
By Proposition \ref{orbss} and Proposition \ref{semistable} we have $\w{E}$ is semistable in the usual sense if and only if $\e$ is parabolic semistable. 
\end{remark}

In fact, the above correspondence between orbifold vector bundles and parabolic vector bundles preserves the Harder--Narasimhan filtration. Consider the Harder-Narasimhan filtration of $\e$ defined in \eqref{HN}, 
\begin{equation*}
E^m\, \supset\, E^{m-1}\, \supset\, \cdots\, \supset\, E^0\,=\,0
\end{equation*}
such that $(E_{i}/E_{i-1})_{\ast}$ is parabolic semistable for each $i\,\in\,[1, m]$, and parabolic slope 
\begin{equation*}
\textrm{par-}\mu(E_{i}/E_{i-1})\,>\, \textrm{par-}\mu(E_{i+1}/E_{i}).
\end{equation*}
Let $\widetilde{E}$ be the orbifold bundle corresponding to the parabolic bundle $E_{\ast}$. Correspondingly, $\widetilde{E}$ admits the Harder--Narasimhan filtration:
\begin{equation}\label{hno}
\widetilde{E} \,=\, \widetilde{E}^{m} \,\supset\, \widetilde{E}^{m-1}\, \supset \cdots \supset\, \widetilde{E}^{0} \,=\, 0,
\end{equation}
such that each quotient $\widetilde{E}^{i}/\widetilde{E}^{i-1}$ is semistable and
\[
\mu\big(\widetilde{E}^{i}/\widetilde{E}^{i-1}\big)
\,>\,
\mu\big(\widetilde{E}^{i+1}/\widetilde{E}^{i}\big).
\]
\section{Parabolic Grassmann Bundles}
Let $X$ be an irreducible smooth projective variety defined over $\mathbb{C}$, and let $E$ be a vector bundle of rank $k$ on $X$. Let $E_*$ be a parabolic vector bundle on $X$ with parabolic divisor $D$ and underlying vector bundle $E$. Fix an integer $1\, \leq\, r\, \leq\, k\,-\,1.$

Let
\begin{equation}\label{rp}
\phi\, \colon\, E_{\textrm{GL}(k,\C)} \,\longrightarrow\, X
\end{equation}
be the ramified principal $\textrm{GL}(k,\C)$--bundle corresponding to $E_*$ (see \cite{BBN}). Let $\Gr(r,\C^k)$ be the Grassmannian variety parametrizing all $r$-dimensional quotients of $\C^k$. The standard action of $\textrm{GL}(k,\C)$ on $\C^k$ produces an action of $\Gr(k, \mathbb{C})$ on $\Gr(r,\C^k)$. Let
\begin{equation}\label{action}
f \,\colon\, \textrm{GL}(k,\C) \,\longrightarrow\, \Aut(\Gr(r,\C^k))
\end{equation}
be the corresponding homomorphism.

\begin{definition}\label{pargras}
The \emph{parabolic Grassmann bundle} associated to the parabolic vector bundle $E_*$, denoted by $\Gr(r,E_*)$, is defined to be the associated (ramified) fiber bundle
\begin{equation}\label{pargrass}
\pi_r \,\colon\, \Gr(r,E_*) \,:=\, E_{\textrm{GL}(k,\C)} \,\times^{\textrm{GL}(k,\C)}\, \Gr(r,\C^k) \,\longrightarrow\, X.
\end{equation}
\end{definition}
In other words, $\Gr(r, E_{\ast})$ is the quotient of $E_{\textrm{GL}(k, \C)}\,\times \Gr(r, \C^{k})$ where two points $(z_{1},b_{1})$
and $(z_{2},b_{2})$, where $z_{1}, z_{2}\,\in\, E_{\Gr(k, \C)}$ and $b_{1}, b_{2}\,\in\,\Gr(r,\C^k)$, are identified if there is $A\,\in\, \textrm{GL}(k, \C)$ such that 
$z_{2}\,=\, z_{1}A$ and $b_{2}\,=\, A^{-1}(b_{1})$.

We now construct the tautological line bundle on $\Gr(r,E_*)$. 

Take a point $x \,\in\, D$; here $x$ need not be a smooth point of $X$. Let $z \,\in\, \phi^{-1}(x)$, where $\phi$ is the morphism in \eqref{rp}. Let $G_z \,\subset\, \textrm{GL}(k,\C)$ be the isotropy subgroup for the action of $\textrm{GL}(k,\C)$ on $E_{\textrm{GL}(k,\C)}$. We recall that $G_z$ is a finite group, see \cite{BL}. Let $n_x$ denote the order of $G_z$ (it is independent of the choice of $z$ because $\textrm{GL}(k,\C)$ acts transitively on the fiber $\phi^{-1}(x)$). The number of distinct integers $n_{x}$ as $x$ varies over $D$ is finite. Let
\begin{equation}\label{N}
N(E_*)\, :=\, \operatorname{l.c.m.}\{n_x  \, \,|\,\,x\,\in\, D\}
\end{equation}
be the least common multiple of all these integers $n_{x}$.
For simplicity, we write
\begin{equation}\label{n}
\lambda \,:=\, N(E_*).
\end{equation}

For any point $y\,\in\, \Gr(r,\C^k)$, let
\begin{equation}\label{H}
H_y\, \subset\, \textrm{GL}(k,\C)
\end{equation}
be the isotropy subgroup for the natural action of $\textrm{GL}(k, \C)$ on $\Gr(r,\C^k)$; 
defined by \eqref{action}. Then $H_y$ is a maximal parabolic subgroup of $\textrm{GL}(k,\C)$ (see \cite{Br}). The group $H_{y}$ acts on the fiber of the tautological line bundle bundle $\cO_{\Gr(r,\C^k)}(1)\, \longrightarrow\, \Gr(r,\C^k)$ over the point $y$. From the definition of $\lambda$ in \eqref{N} it follows immediately that for any $z \,\in\, \phi^{-1}(D)$, and any $y \,\in\, \Gr(r,\C^k)$, the finite group $G_z \,\cap\, H_y\,\subset\, \textrm{GL}(k, \mathbb{C})$ acts trivially on the fiber of the line bundle
\[
\cO_{\Gr(r,\C^k)}(\lambda)
\,:=\,
\cO_{\Gr(r,\C^k)}(1)^{\otimes \lambda}
\]
over the point $y$. Consider the action of $\textrm{GL}(k,\C)$ on the total space of the line bundle $\cO_{\Gr(r,\C^k)}(\lambda)$ constructed using the standard action of $\textrm{GL}(k,\C)$ on $\C^k$, and let
\begin{equation}\label{fb}
E_{\textrm{GL}(k,\C)}(\cO_{\Gr(r,\C^k)}(\lambda))\,:=\,
E_{\textrm{GL}(k,\C)} \,\times^{\textrm{GL}(k,\C)}\, \cO_{\Gr(r,\C^k)}(\lambda)
\,\longrightarrow\, X
\end{equation}
be the associated fiber bundle. Since the natural projection $\cO_{\Gr(r,\C^k)}(\lambda) \,\longrightarrow\, \Gr(r,\C^k)$ is $\textrm{GL}(k,\C)$--equivariant, it induces a projection 
\begin{equation}\label{taut}
E_{\textrm{GL}(k,\C)}(\cO_{\Gr(r,\C^k)}(\lambda))
\,\longrightarrow\,
\Gr(r,E_*),
\end{equation}
over the map $p\,:\,Y\,\longrightarrow\,X$.

Using the observation above that $G_z \,\cap\, H_y$ acts trivially on the fiber of $\cO_{\Gr(r,\C^k)}(\lambda)$ over $y$ it follows immediately that $E_{\textrm{GL}(k,\C)}(\cO_{\Gr(r,\C^k)}(\lambda))$ in \eqref{taut} is an algebraic line bundle over the variety $\Gr(r,E_*)$.

\begin{definition}
The line bundle
\begin{equation}\label{tauto}
E_{\textrm{GL}(k,\C)}(\cO_{\Gr(r,\C^k)}(\lambda))
\,\longrightarrow\,
\Gr(r,E_*)
\end{equation}
will be called the \emph{tautological line bundle} on $\Gr(r,E_*)$, and it will be denoted by $\cO_{\Gr(r,E_*)}(1).$

\end{definition}
Let $X$ be a smooth complex projective curve with $D$ being a reduced effective divisor on it. Let $E_{\ast}$ be a parabolic bundle on $X$ with parabolic structure on $D$ such that all the parabolic weights are rationals. Assume that $E_{*}$ satisfy \cite[Assumption~3.2]{Bi1}.  The ``covering lemma'' says that there exists a smooth projective curve $Y$ and a finite Galois morphism
\[
p\, \colon\, Y\, \longrightarrow\, X
\]
with Galois group
\[
\Gamma \,=\, \Gal(K(Y)/K(X)),
\]
such that $E_*$ corresponds to a unique orbifold vector bundle $\widetilde{E}$ on $Y$ (see Section~3 of \cite{Bi1}). The action of $\Gamma$ on $\w{E}$ produces a left action of $\Gamma$ on $\textrm{Gr}(r, \w{E})$. 
It can be noted, that the variety $\Gr(r, \e)$ in $\eqref{pargras}$ is the quotient 
$$\Gr(r, \w{E})/\Gamma\,=\, \Gr(r, \e).$$
We note that the quotient $\mathcal{O}_{\Gr(r, \w{E})}(\lambda)/\Gamma$ is a line bundle over $\grt/\Gamma$ because the isotropy subgroups, for the action of $\Gamma$ on $\grt$, act trivially on the corresponding fibers of $\mathcal{O}_{\grt}(\lambda)$. We have a natural isomorphism of the bundles 
\begin{equation}\label{cons}
\mathcal{O}_{\grt}(\lambda)/\Gamma\,=\,\mathcal{O}_{\grp}(1).
\end{equation}
Let
\[
\widetilde{\pi}_r \,\colon\, \Gr(r,\widetilde{E}) \,\longrightarrow\, Y
\]
be the Grassmann bundle over $Y$ parametrizing all $r$-dimensional quotients of the fibers of $\widetilde{E}$. Let $\w{p}_{r}$ be the quotient morphism
\[
\widetilde{p}_r \,\colon\, \Gr(r,\widetilde{E}) \,\longrightarrow\, \Gr(r,E_*).
\]
It follows from \eqref{cons} that
\begin{equation}\label{pullback}
 \w{p}_{r}^{\,\ast}(\mathcal{O}_{\grp}(1))\,=\, \mathcal{O}_{\grt}(\lambda).   
\end{equation}
Consider the following commuative diagram
\begin{equation}\label{0}
\begin{tikzcd}
\Gr(r,\widetilde{E}) \arrow[r, "\widetilde{\pi}_r"] \arrow[d, "\widetilde{p}_r"']
& Y \arrow[d, "p"] \\
\Gr(r,E_*) \arrow[r, "\pi_r"']
& X.
\end{tikzcd}
\end{equation}
\begin{remark}
In contrast to the usual Grassmann bundle $\textrm{Gr}(r, \widetilde{E})$, the bundle $\textrm{Gr}(r, {E}_{\ast})$ need not be smooth, as ramification may occur. We see this through the following example.
\end{remark}

\begin{example}\label{example}
Let \(Y\) be a smooth curve and let $E\,=\,\mathcal O_Y^{\oplus 2}\oplus \mathcal O_Y^{\oplus 2}.$ Write $E^{1}\,:=\,\mathcal O_Y^{\oplus 2},$ and $E^{2}\,:=\,\mathcal O_Y^{\oplus 2}.$ Let
\(\Gamma\,=\,\mathbb Z_{2}\,=\,\{1,\gamma\}\) act trivially on \(Y\) and on \(E\) it acts by
\[
\gamma\cdot(u,v)\,=\,(u,-v),
\qquad
u\in E^{1},\; v\in E^{2}.
\]
This induces an action of \(\Gamma\) on the  Grassmann bundle $\Gr(2,E)\,\longrightarrow\,Y.$ For \(y\,\in\,Y\), a point of \(\Gr(2,E)\) over \(y\) is a two-dimensional subspace $W\,\subset\, E_y\,=\,E_y^{1}\,\oplus\, E_y^{2}.$ Such a point is fixed by \(\gamma\) if and only if \(W\) is \(\gamma\)-invariant. Since \(\gamma\) acts by \(+1\) on \(E_y^{1}\) and by \(-1\) on \(E_y^{2}\), this is
equivalent to
\[
W\,=\,(W\cap E_y^{1})\,\oplus\, (W\,\cap\,E_y^{2}).
\]
Therefore the fixed locus is
\[
\Gr(2,E^{1})
\;\cup\;
\bigl(\Gr(1,E^{1})\times_Y \Gr(1,E^{2})\bigr)
\;\cup\;
\Gr(2,E^{2}).
\]
Since \(\operatorname{rk}(E^{1})\,=\,\operatorname{rk}(E^{2})\,=\,2\), we have
\[
\Gr(2,E^{1})\,\cong\, Y,
\qquad
\Gr(2,E^{2})\,\cong\, Y,
\]
and
\[
\Gr(1,E^{1})\,\times_Y\, \Gr(1,E^{2})
\]
has  dimension \(3\). On the other hand, $\Gr(2,E)\,\longrightarrow\, Y$ has dimension
$2(4\,-\,2)\,=\,5.$ Hence the components of the fixed locus have codimensions \(4,2,4\),
respectively. In particular, the fixed locus has codimension at least \(2\). So $\Gr(2,E)/\Gamma
$ is not smooth.
\end{example}

\subsection{Setup and Notations}\label{3.1}
From now on, let $X$ and $Y$ be smooth complex projective curves, and let ${E}_{\ast}$ be a parabolic bundle on $X$ of rank $k$ with parabolic divisor $D$, satisfies the additional hypotheses of \cite[Assumption~3.2]{Bi1}, and let $\widetilde{E}$ be the corresponding orbifold bundle. Let $1\,\leq\,r\,\leq\,k-1$, and consider the projection  
\[
\pi_r \,\colon\, \Gr(r,\,E_*)\, \longrightarrow\, X.
\]
Let $x \in X\setminus D$ be an unramified point, then we define
\begin{equation}\label{fiber}
\mathcal{L}_{r} \,:=\, \pi_r^*\mathcal{O}_X(x).
\end{equation}
Also for $y\,\in\,Y$ define 
\begin{equation}\label{fibert}
\widetilde{\mathcal{L}}_{r}\,:=\, \widetilde{\pi}_{r}^{\, \ast} \mathcal{O}_{Y}(y).
\end{equation}
If $p^{-1}(x)\,=\,\{y_1,\dots,y_{|\Gamma|}\}$,
then
\begin{equation}\label{capl}
\widetilde{L}_{r}\,=\, \widetilde{p}_r^{\,*}(\mathcal{L}_{r})
\,=\,
\widetilde{\pi}_r^{\,*}p^*\mathcal{O}_X(x)
\,=\,
\bigotimes_{i=1}^{|\Gamma|}\widetilde{\pi}_r^{\,*}\mathcal{O}_Y(y_i).
\end{equation}
Since $N^1(Y)\,=\,\R$, all points of $Y$ are numerically equivalent, and hence
\begin{equation}\label{eq:pullback-L}
c_1\bigl(\widetilde{L}_{r}\bigr)
\,=\,
|\Gamma|\, c_1\bigl( \widetilde{\mathcal{L}}_{r}\bigr).
\end{equation}
Similarly if $x^{\prime}\,\in\, D$ and let
\begin{equation}\label{fiberp}
\mathcal{L}^{'}_{r}\,:=\,\pi_r^*\mathcal{O}_X(x').\end{equation}

However, since $N^1(X)\,=\,\R$, so  $c_{1}(\mathcal{O}_{X}(x))$ and $c_{1}(\mathcal{O}_{X}(x'))$ are numerical equivalent classes in $ N^{1}(X)$. Therefore
\begin{equation}\label{equal}
c_{1}(\mathcal{L}_{r})\,=\, c_{1}(\mathcal{L}^{\prime}_{r})
\,\in\, N^1(\Gr(r,E_*)).
\end{equation}
Observe that for any unramified point $x \in X \setminus D$, the fiber $\pi_{r}^{-1}(x)\,=\,\mathrm{Gr}(r,\mathbb{C}^{k})$, and hence is smooth. In contrast, for $x' \in D$, the fiber $\pi_{r}^{-1}(x')$ may be a finite quotient of $\mathrm{Gr}(r,\mathbb{C}^{k})$. Thus, if $x\,\in\, X\setminus D$ and $y\,\in\, p^{-1}(x)$, then restricting \eqref{0} gives
\begin{equation}\label{eq:unramified-fiber}
\begin{tikzcd}
\widetilde{\pi}_r^{-1}(y)=\Gr(r,\C^k) \arrow[r, hook] \arrow[d, "\cong"']
& \Gr(r,\widetilde{E}) \arrow[d, "\widetilde{p}_r"] \\
\pi_r^{-1}(x) \arrow[r, hook]
& \Gr(r,E_*).
\end{tikzcd}
\end{equation}
In particular,
\begin{equation}\label{eq:taut-restrict-fiber}
\mathcal{O}_{\Gr(r,E_*)}(1)\big|_{\pi_r^{-1}(x)}
\,\cong\,
\mathcal{O}_{\Gr(r,\C^k)}(\lambda).
\end{equation}

\begin{convention}\label{con1}
We  adopt the following convention throughout the manuscript. Since $\grt$ is smooth, we will not distinguish between the groups
\begin{equation*}
N^{k}(\grt)\,=\,
N_{n-k}(\grt),
\end{equation*}
where $n\,=\,\dim(\grt)$. However, since $\grp$ need not be smooth (see Example~\ref{example}), we shall distinguish between the groups $N^{k}(\grp)$ and $N_{n-k}(\grp)$, where $n=\dim(\grp))$. More precisely, let $P\,\in\,N^{k}(\grp)$ be represented by a weighted degree $k$--Chern polynomial. Then the corresponding element of $N_{n-k}(\grp)$ will be denoted by $P\,\cap\,[\grp]$.
\end{convention}

\subsection{Nef Cone of the Parabolic Grassmann Bundle over a Curve.} Let $\e$ be a parabolic vector bundle over $X$ of rank at least two, say $k$ and parabolic divisor $D$.  Fix an integer $r\in [1,k-1]$. Let
\[
\Gr(r,E_{\ast}) \longrightarrow X
\]
be the parabolic Grassmann bundle defined in \eqref{pargrass}, and let
$\mathcal{O}_{\Gr(r,E_{\ast})}(1)$ be its tautological line bundle on $\textrm{Gr}(r, E_{\ast})$, defined in \eqref{tauto}.

We determine the numerical group $N^{1}(\Gr(r,E_{\ast}))$ and the nef cone $\Nef^{1}(\Gr(r,E_{\ast}))$.

\begin{proposition}\label{prop:N1}
The real vector space $N^1(\Gr(r,E_*))$ is two-dimensional and is generated by the classes
\[
N^{1}(\grpp)
\,=\,
\Big\langle
c_1\bigl(\mathcal{O}_{\Gr(r,E_*)}(1)\bigr),\,
c_1(\mathcal{L}_{r})
\Big\rangle,
\]
where $\mathcal{L}_{r}\,:=\, \pi_r^*\mathcal{O}_X(x)$ for some $x\,\in\, X\setminus D$.
\end{proposition}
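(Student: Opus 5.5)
We compare $N^1(\Gr(r,E_*))$ with $N^1(\Gr(r,\widetilde E))$ through the finite quotient map $\widetilde p_r$. For the smooth Grassmann bundle over the curve $Y$, the standard fact is that $N^1(\Gr(r,\widetilde E))$ is two-dimensional, with basis $c_1(\mathcal O_{\Gr(r,\widetilde E)}(1))$ and $c_1(\widetilde{\mathcal L}_r)$. This follows from the Plücker description, or from $\mathrm{Pic}(\Gr(r,\widetilde E))\cong \mathrm{Pic}(Y)\oplus\mathbb Z$ together with $N^1(Y)=\R$.

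The first step is to check that $\widetilde p_r^{\,*}\colon N^1(\Gr(r,E_*))\to N^1(\Gr(r,\widetilde E))$ is well defined and injective. Let $L$ be a line bundle on $\Gr(r,E_*)$ and suppose $\widetilde p_r^{\,*}L$ is numerically trivial. Let $C\subset\Gr(r,E_*)$ be an irreducible curve and $\widetilde C$ a component of $\widetilde p_r^{-1}(C)$. Then $\widetilde C\to C$ is finite surjective of some degree $d>0$, and the projection formula gives
\[
\deg(\widetilde p_r^{\,*}L|_{\widetilde C}) = d\,\deg(L|_C).
\]
Hence $L\cdot C=0$ for every curve $C$, so $L\equiv 0$. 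Equivalently, this injectivity is the statement that the composite \eqref{comp} is multiplication by $|\Gamma|$. It follows that $\dim N^1(\Gr(r,E_*))\le 2$.

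The second step is to show that the two proposed classes map to independent classes. By \eqref{pullback}, $\widetilde p_r^{\,*}c_1(\mathcal O_{\Gr(r,E_*)}(1))=\lambda\, c_1(\mathcal O_{\Gr(r,\widetilde E)}(1))$. By \eqref{capl} and \eqref{eq:pullback-L}, $\widetilde p_r^{\,*}c_1(\mathcal L_r)=|\Gamma|\,c_1(\widetilde{\mathcal L}_r)$. These are nonzero multiples of the two basis elements, so they are linearly independent. Since $\widetilde p_r^{\,*}$ is linear, the classes $c_1(\mathcal O_{\Gr(r,E_*)}(1))$ and $c_1(\mathcal L_r)$ are themselves independent in $N^1(\Gr(r,E_*))$. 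Combined with the bound from the first step, this gives dimension exactly two, and these two classes form a basis. The argument avoids needing surjectivity of $\widetilde p_r^{\,*}$ onto $N^1(\Gr(r,\widetilde E))$, though that surjectivity follows a posteriori.

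The step needing the most care is the standard input that $N^1(\Gr(r,\widetilde E))$ has rank two on a smooth curve. Independence of the two generators there can be checked directly. A curve $\ell$ that is a line in a fiber Grassmannian (in its Plücker embedding) has $\mathcal O(1)\cdot\ell=1$ and $\widetilde{\mathcal L}_r\cdot\ell=0$. A curve $C$ with $C\cdot\widetilde{\mathcal L}_r\neq0$ can be obtained as a section coming from a rank-$r$ quotient of $\widetilde E$. For the singular quotient, one must also make sure that $N^1$ there really means Chern classes of line bundles modulo numerical equivalence against curves, as in the Remark citing \cite{FL2}. That is exactly the notion the pullback argument uses.
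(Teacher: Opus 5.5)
Your proof is correct and follows essentially the same route as the paper. Both arguments get injectivity of $\widetilde p_r^{\,*}$ on $N^1$ from the projection formula for the finite surjective map $\widetilde p_r$, deduce $\dim N^1(\Gr(r,E_*))\le 2$ from the rank-two space $N^1(\Gr(r,\widetilde E))$, and obtain independence from $\widetilde p_r^{\,*}c_1(\mathcal O_{\Gr(r,E_*)}(1))=\lambda\,c_1(\mathcal O_{\Gr(r,\widetilde E)}(1))$ and $\widetilde p_r^{\,*}c_1(\mathcal L_r)=|\Gamma|\,c_1(\widetilde{\mathcal L}_r)$; your curve-by-curve justification of injectivity is just a more explicit version of the paper's one-line appeal to the projection formula.
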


\begin{proof}
Recall the notations in \S\ref{3.1}. We have the pullback map
\[
\widetilde{p}_r^{\,*}\,:\,
N^1(\grp)\,\longrightarrow\, N^1(\Gr(r,\widetilde{E})).
\] Using the projection formula and surjectivity of $ \widetilde{p}_r$, we see that $\widetilde{p}_r^{\,*} $  is injective.

It is well-known that $\textrm{dim}_{\R} N^1(\Gr(r,\widetilde{E}))\,=\,2,$ and that it is generated by the classes $c_1\bigl(\mathcal{O}_{\Gr(r,\widetilde{E})}(1)\bigr)
\,\text{and}\,
c_1\bigl(\widetilde{\pi}_r^{\,*}\mathcal{O}_Y(y)\bigr)$ for some $y\,\in\,Y$. Hence,
\[
\dim_{\R}N^1(\Gr(r,E_*))
\,\le\, 2.
\]
We now show that the classes $c_1\bigl(\mathcal{O}_{\Gr(r,E_*)}(1)\bigr)\,\,\text{and}\,\,
c_1(\mathcal{L}_{r})$ are linearly independent. Suppose that
\[
a\, c_1\bigl(\mathcal{O}_{\Gr(r,E_*)}(1)\bigr)
\,+\,
b\, c_1(\mathcal{L}_{r})
\,=\,0
\]
for some $a,b\,\in\, \R$. Pulling back via $\widetilde{p}_r^{\,*}$ and using \eqref{pullback} and \eqref{eq:pullback-L}, we obtain
\[
aN\, c_1\bigl(\mathcal{O}_{\Gr(r,\widetilde{E})}(1)\bigr)
\,+\,
b|\Gamma|\, c_1\bigl(\widetilde{\pi}_r^{\,*}\mathcal{O}_Y(y)\bigr)
\,=\,0
\]
in $N^1(\Gr(r,\widetilde{E}))$. Since these two classes are linearly independent, we conclude that
\[
a\,=\,b\,=\,0.
\]
Therefore,
\[
N^1(\Gr(r,E_*))
\,=\,
\R\cdot
c_1\bigl(\mathcal{O}_{\Gr(r,E_*)}(1)\bigr)
\oplus
\R\cdot
c_1(\mathcal{L}_{r}).
\]
This completes the proof.
\end{proof}
\begin{remark}\label{rem:Lx-Lxprime}
Let $x^{\prime}\,\in\,  D$, then by \eqref{equal}, we can write 
$$N^{1}(\grp)\,=\,\Big\langle c_1\bigl(\mathcal{O}_{\Gr(r,E_*)}(1)\bigr),\,\,    c_1(\mathcal{L}^{\prime}_{r}) \Big\rangle.$$
\end{remark}
We first prove the following lemma on the relative ampleness of $\mathcal{O}_{\Gr(r, E_{\ast})}(1)$.

\begin{lemma}\label{relamp}
The tautological line bundle $\mathcal{O}_{\Gr(r,E_*)}(1)$ on $\Gr(r,E_*)$ is relatively ample over $X$.
\end{lemma}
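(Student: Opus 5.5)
The plan is to reduce relative ampleness to ampleness on fibers, and then to pass to the smooth cover $\Gr(r,\widetilde{E})$ via the finite map $\widetilde{p}_r$. The morphism $\pi_r\colon \Gr(r,E_*)\to X$ is projective. The variety $\Gr(r,E_*)=\Gr(r,\widetilde E)/\Gamma$ is a quotient of a projective variety by a finite group, and $\pi_r$ is induced from $p\circ\widetilde\pi_r$. By the relative version of Kleiman's criterion (equivalently, the fiberwise criterion for ampleness of a line bundle on a proper morphism), it therefore suffices to show that for every closed point $x\in X$ the restriction $\mathcal{O}_{\Gr(r,E_*)}(1)|_{\pi_r^{-1}(x)}$ is ample on the fiber $\pi_r^{-1}(x)$.

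For an unramified point $x\in X\setminus D$, this is immediate from \eqref{eq:unramified-fiber} and \eqref{eq:taut-restrict-fiber}. The fiber is isomorphic to $\Gr(r,\C^k)$, and the restriction is $\mathcal{O}_{\Gr(r,\C^k)}(\lambda)$, which is ample because $\mathcal{O}_{\Gr(r,\C^k)}(1)$ is the Plücker polarization and $\lambda\geq 1$.

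For a point $x'\in D$, the fiber $\pi_r^{-1}(x')$ may be a finite quotient of $\Gr(r,\C^k)$. Here I will use the fact that ampleness descends along finite surjective morphisms: if $f\colon Z\to W$ is finite and surjective between proper schemes and $f^*L$ is ample, then $L$ is ample. To apply it, pick $y\in p^{-1}(x')$. The map $\widetilde p_r$ restricts to a finite surjective morphism $\widetilde\pi_r^{-1}(y)\cong\Gr(r,\C^k)\to\pi_r^{-1}(x')_{\mathrm{red}}$; this is the quotient by the stabilizer $\Gamma_y$, which preserves the fiber over $y$. By \eqref{pullback}, the pullback of $\mathcal{O}_{\Gr(r,E_*)}(1)$ is $\mathcal{O}_{\Gr(r,\widetilde E)}(\lambda)|_{\widetilde\pi_r^{-1}(y)}=\mathcal{O}_{\Gr(r,\C^k)}(\lambda)$, which is ample. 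Hence the restriction to $\pi_r^{-1}(x')_{\mathrm{red}}$ is ample. Since ampleness on a proper scheme can be checked on its reduction, it is ample on the fiber itself.

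Alternatively, one could bypass the fibers entirely. The bundle $\widetilde p_r^{\,*}\mathcal{O}_{\Gr(r,E_*)}(1)=\mathcal{O}_{\Gr(r,\widetilde E)}(\lambda)$ is relatively ample over $Y$, and $p$ is finite, so it is relatively ample over $X$ with respect to $p\circ\widetilde\pi_r=\pi_r\circ\widetilde p_r$. The same finite-descent statement, in its relative form, then gives relative ampleness over $X$.

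The main obstacle is the ramified fibers: they may be singular and non-reduced as scheme-theoretic fibers, and the identification $\mathcal{O}(\lambda)$ only holds after pullback. The finite-descent lemma for ampleness is what handles this, together with the compatibility of $\widetilde p_r$ with restriction to fibers, which follows from the commutative diagram \eqref{0}.
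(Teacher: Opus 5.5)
Your proposal is correct and follows essentially the same route as the paper. The paper's proof is your ``alternative'' paragraph: $\mathcal{O}_{\Gr(r,\widetilde{E})}(\lambda)$ is relatively ample over $Y$, $\widetilde{p}_r^{\,*}\mathcal{O}_{\Gr(r,E_*)}(1)=\mathcal{O}_{\Gr(r,\widetilde{E})}(\lambda)$, and ampleness descends along the finite surjective map $\widetilde{p}_r$. Your fiberwise version runs the same argument more carefully, making explicit two steps the paper leaves implicit: the passage from relative ampleness over $Y$ to relative ampleness over $X$, and the treatment of the possibly non-reduced fibers over points of $D$.
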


\begin{proof}
Let $x \in X$, and let $y \in p^{-1}(x)$ under the finite Galois morphism $p \,\colon\, Y \,\longrightarrow\, X.$ Then we have the following commutative diagram:
\begin{center}
\begin{tikzcd}
\widetilde{\pi}_r^{-1}(y)\, =\, \Gr(r,\C^k) \arrow[r, hook] \arrow[d] 
& \Gr(r,\widetilde{E}) \arrow[r, "\widetilde{\pi}_r"] \arrow[d, "\widetilde{p}_r"] 
& Y \arrow[d, "p"] \\
\pi_r^{-1}(x) \arrow[r, hook] 
& \Gr(r,E_*) \arrow[r, "\pi_r"] 
& X.
\end{tikzcd}
\end{center}
It is known that the line bundle $\mathcal{O}_{\Gr(r,\widetilde{E})}(1)$ is relatively ample over $Y$ (see \cite{BP}). Hence, $\mathcal{O}_{\Gr(r,\widetilde{E})}(\lambda)$ is also relatively ample over $Y$. By construction of $\mathcal{O}_{\textrm{Gr}(r, \e)}(1)$, we have
\[
\widetilde{p}_{r}^{\,*}\bigl(\mathcal{O}_{\Gr(r,E_*)}(1)\bigr)
\,=\,
\mathcal{O}_{\Gr(r,\widetilde{E})}(\lambda).
\]
Since $\widetilde{p}_{r}$ is a finite surjective morphism, it follows that 
$\mathcal{O}_{\Gr(r,E_*)}(1)$ is relatively ample over $X$.
\end{proof}
Consider the Harder--Narasimhan filtration of \(E_{\ast}\) defined in \eqref{HN}, and recall that \(m\) denotes its length. Let 
\[
t\,\in\, [1, m]
\]
be the unique largest integer such that
\begin{equation}\label{t}
\sum_{i=t}^{m}\rk(E^i/E^{i-1})\, \ge\, r.
\end{equation}
Equivalently, either $t\,=\, m$, or $t$ is the smallest integer such that
\begin{equation*}
\sum_{i\,=\,t+1}^{m}\,\textrm{rk}(E^{i}/E^{i-1})
\,=\,
\textrm{rk}(E/E^{t})
\,<\, r.
\end{equation*}
Define
\begin{equation}\label{thetap}
\theta_{E_*,r}
\,:=\,
\bigl(r-\rk(E/E^t)\bigr)\cdot 
\mathrm{par}\text{-}\mu\bigl((E^t/E^{t-1})_*\bigr)
\,+\,
\mathrm{par}\text{-}\textrm{deg}(E/E^t)_* .
\end{equation}
Now consider the Harder--Narasimhan filtration of $\widetilde{E}$ defined in \eqref{hno}. We define
\begin{equation}\label{thetat}
\theta_{\widetilde{E},r}
\,:=\,
(r\,-\,\textrm{rk}(\w{E}/\w{E}^{t}))
\cdot
\mu(\w{E}^{t}/\w{E}^{t-1})
\,+\,
\textrm{deg}(\w{E}/\w{E}^{t}),
\end{equation}
(see \cite{BP} for further details). Then one can see that
\begin{equation}\label{eq:theta-compare}
\theta_{\widetilde{E},r}
\,=\,
|\Gamma|\,\theta_{E_*,r}.
\end{equation}
Note that $\theta_{\e,r}$ in \eqref{thetap} need not be an integer, it is clear that $\theta_{\e,r}$ is well defined as an element in $N^{1}(\grp)$ because $\theta_{\e,r}\,\in\, \mathbb{Q}$.

\begin{lemma}\label{positivity}
The tautological line bundle $\mathcal{O}_{\Gr(r,E_*)}(1)$ satisfies the following:
\begin{enumerate}
\item[(i)] If $\theta_{E_*,r}\,>\,0$, then $\mathcal{O}_{\Gr(r,E_*)}(1)$ is ample.
\item[(ii)] If $\theta_{E_*,r}\,=\,0$, then $\mathcal{O}_{\Gr(r,E_*)}(1)$ is nef but not ample.
\item[(iii)] If $\theta_{E_*,r}\,<\,0$, then $\mathcal{O}_{\Gr(r,E_*)}(1)$ is not nef.
\end{enumerate}
\end{lemma}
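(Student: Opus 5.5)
The plan is to reduce everything to the orbifold Grassmann bundle $\Gr(r,\widetilde{E})$ over $Y$ via the finite surjective quotient map $\widetilde{p}_r$, and then invoke the known criterion of \cite{BP}. By \eqref{pullback} we have $\widetilde{p}_r^{\,*}\mathcal{O}_{\Gr(r,E_*)}(1)=\mathcal{O}_{\Gr(r,\widetilde{E})}(\lambda)$. By \eqref{eq:theta-compare}, $\theta_{\widetilde{E},r}=|\Gamma|\,\theta_{E_*,r}$, so $\theta_{\widetilde{E},r}$ and $\theta_{E_*,r}$ have the same sign. The result in \cite{BP} states that $\mathcal{O}_{\Gr(r,\widetilde{E})}(1)$ is ample, nef but not ample, or not nef according as $\theta_{\widetilde{E},r}>0$, $=0$ or $<0$. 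Since positive multiples do not change any of these properties, the same trichotomy holds for $\mathcal{O}_{\Gr(r,\widetilde{E})}(\lambda)$.

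Next we transfer each property along $\widetilde{p}_r$, using two standard facts about a finite surjective morphism $f$ of projective varieties. First, a line bundle $L$ is ample if and only if $f^*L$ is ample. The forward direction is immediate. For the converse, I would use the Nakai--Moishezon criterion, which works on singular projective varieties: for a subvariety $V\subset\Gr(r,E_*)$, choose a component $W$ of $\widetilde{p}_r^{-1}(V)$ dominating $V$; then $(f^*L)^{\dim W}\cdot W=\deg(W/V)\,L^{\dim V}\cdot V$. Second, $L$ is nef if and only if $f^*L$ is nef. Nefness pulls back under any proper morphism. Conversely, every curve $C$ in the base is dominated by a curve $C'$ upstairs, and the projection formula gives $\deg(f^*L|_{C'})=\deg(C'/C)\deg(L|_C)$. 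Both facts rest on the projection formula for $\widetilde{p}_{r*}$ together with surjectivity, which was already used in the proof of Proposition~\ref{prop:N1}.

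The three cases then follow. If $\theta_{E_*,r}>0$, the pullback is ample, so $\mathcal{O}_{\Gr(r,E_*)}(1)$ is ample. If $\theta_{E_*,r}=0$, the pullback is nef but not ample, so $\mathcal{O}_{\Gr(r,E_*)}(1)$ is nef, and it cannot be ample because the pullback of an ample bundle under a finite morphism is ample. If $\theta_{E_*,r}<0$, the pullback is not nef; since nefness is preserved by pullback, $\mathcal{O}_{\Gr(r,E_*)}(1)$ is not nef.

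The main obstacle is the identity \eqref{eq:theta-compare}, which the statement relies on. To justify it, I would use that the orbifold/parabolic correspondence preserves Harder--Narasimhan filtrations, as in \eqref{hno}, with ranks preserved and $\deg=|\Gamma|\cdot\textrm{par-deg}$ on each graded piece. This makes the index $t$ the same on both sides and scales every slope and degree by $|\Gamma|$. The degree of the quotient $\widetilde{E}/\widetilde{E}^t$ needs care: it corresponds to the parabolic quotient $(E/E^t)_*$ with its induced quotient parabolic structure. Given that, the remaining steps are formal.
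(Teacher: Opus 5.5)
Your proposal is correct and follows essentially the same route as the paper: it transfers the trichotomy of \cite[Theorem~3.4]{BP} for $\mathcal{O}_{\Gr(r,\widetilde{E})}(\lambda)$ to $\mathcal{O}_{\Gr(r,E_*)}(1)$, using $\widetilde{p}_r^{\,*}\mathcal{O}_{\Gr(r,E_*)}(1)=\mathcal{O}_{\Gr(r,\widetilde{E})}(\lambda)$ and the sign comparison \eqref{eq:theta-compare}. You add detail the paper omits, namely why ampleness and nefness both descend along and pull back under the finite surjective map $\widetilde{p}_r$, and why $\theta_{\widetilde{E},r}=|\Gamma|\,\theta_{E_*,r}$ holds, which the paper simply asserts.
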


\begin{proof}
By \eqref{eq:theta-compare}, $\theta_{\widetilde{E},r}$ and $\theta_{E_*,r}$ have the same sign. By \cite[Theorem~3.4]{BP}, the line bundle $\mathcal{O}_{\Gr(r,\widetilde{E})}(1)$ on $\Gr(r,\widetilde{E})$ is ample, nef but not ample, or not nef according as $\theta_{\widetilde{E},r}$ is positive, zero, or negative, respectively. Since $\lambda\,>\,0$, the line bundle $\mathcal{O}_{\Gr(r,\widetilde{E})}(\lambda)$ has the same positivity properties. Using \eqref{pullback}, we have
\[
\widetilde{p}_r^{\,*}\bigl(\mathcal{O}_{\Gr(r,E_*)}(1)\bigr)
\,=\,
\mathcal{O}_{\Gr(r,\widetilde{E})}(\lambda).
\]
Since $\widetilde{p}_r$ is proper (resp., finite) and surjective, $\mathcal{O}_{\Gr(r,E_*)}(1)$ is ample (resp., nef). This proves $(i)$, $(ii)$, and $(iii)$.
\end{proof}
Next, we compute the numerical group $N^{1}(\Gr(r,E_{\ast}))$ and the nef cone $\Nef^{1}(\Gr(r,E_{\ast}))$.

\begin{theorem}\label{thm:nef-cone}
The boundary of the nef cone of $\Gr(r,E_*)$ in
$N^{1}(\emph{Gr}(r,\e))$ is given by
$c_1\bigl(\mathcal{O}_{\Gr(r,E_*)}(1)\bigr)
-\lambda\theta_{E_*,r}\,c_{1}(\mathcal{L}_{r})$
and $c_1(\mathcal{L}_{r})$,
where $x\, \in\, X\setminus D$ and $\lambda$ is defined in \eqref{n}.
\end{theorem}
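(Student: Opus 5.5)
The plan is to reduce everything to the smooth Grassmann bundle $\Gr(r,\widetilde{E})$ over $Y$ via the finite quotient map $\widetilde{p}_r$. There the nef cone is known from \cite{BP}: the two boundary rays of $\Nef(\Gr(r,\widetilde{E}))$ are spanned by $c_1(\mathcal{O}_{\Gr(r,\widetilde{E})}(1))-\theta_{\widetilde{E},r}\,c_1(\widetilde{\mathcal{L}}_r)$ and $c_1(\widetilde{\mathcal{L}}_r)$. I will show that
\[
\Nef(\Gr(r,E_*))\,=\,(\widetilde{p}_r^{\,*})^{-1}\bigl(\Nef(\Gr(r,\widetilde{E}))\bigr),
\]
where $\widetilde{p}_r^{\,*}\colon N^1(\Gr(r,E_*))\to N^1(\Gr(r,\widetilde{E}))$ is the pullback. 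Then I identify which classes are sent to the two boundary rays.

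\textbf{Step 1: nefness is detected after pullback.} Let $\mathcal{M}$ be a line bundle on $\Gr(r,E_*)$. If $\mathcal{M}$ is nef, then $\widetilde{p}_r^{\,*}\mathcal{M}$ is nef, since nefness is preserved under proper pullback. Conversely, suppose $\widetilde{p}_r^{\,*}\mathcal{M}$ is nef, and let $C\subset \Gr(r,E_*)$ be an irreducible curve. Since $\widetilde{p}_r$ is finite and surjective, some irreducible component $C'$ of $\widetilde{p}_r^{-1}(C)$ maps finitely onto $C$. The projection formula gives
\[
\deg\bigl(\widetilde{p}_r^{\,*}\mathcal{M}|_{C'}\bigr)\,=\,\deg(C'/C)\cdot(\mathcal{M}\cdot C).
\]
The left side is nonnegative and $\deg(C'/C)>0$, so $\mathcal{M}\cdot C\geq 0$. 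The same argument works for $\mathbb{R}$-classes by linearity.

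This is the only point where the possible singularity of $\Gr(r,E_*)$ (Example~\ref{example}) matters. The pairing there is the Chern-class action on curves, and I expect the main care to lie in checking that the projection formula for this action holds for the finite map $\widetilde{p}_r$; this is \cite[Prop.~2.5]{Fu}.

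\textbf{Step 2: compute the pullback on the basis.} By Proposition~\ref{prop:N1}, $N^1(\Gr(r,E_*))$ has basis $c_1(\mathcal{O}_{\Gr(r,E_*)}(1))$ and $c_1(\mathcal{L}_r)$. By \eqref{pullback} and \eqref{eq:pullback-L},
\[
\widetilde{p}_r^{\,*}c_1(\mathcal{O}_{\Gr(r,E_*)}(1))\,=\,\lambda\, c_1(\mathcal{O}_{\Gr(r,\widetilde{E})}(1)),\qquad \widetilde{p}_r^{\,*}c_1(\mathcal{L}_r)\,=\,|\Gamma|\, c_1(\widetilde{\mathcal{L}}_r).
\]
So $\widetilde{p}_r^{\,*}$ is a linear isomorphism between two-dimensional spaces: it is injective by the proof of Proposition~\ref{prop:N1}, and both spaces have dimension two. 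A linear isomorphism carries boundary rays of a cone to boundary rays of its image, so the boundary rays of $\Nef(\Gr(r,E_*))$ are the preimages of those of $\Nef(\Gr(r,\widetilde{E}))$.

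\textbf{Step 3: identify the preimages of the boundary rays.} Using \eqref{eq:theta-compare}, $\theta_{\widetilde{E},r}=|\Gamma|\theta_{E_*,r}$, so
\[
\widetilde{p}_r^{\,*}\Bigl(c_1(\mathcal{O}_{\Gr(r,E_*)}(1))-\lambda\theta_{E_*,r}\,c_1(\mathcal{L}_r)\Bigr)
\,=\,\lambda\Bigl(c_1(\mathcal{O}_{\Gr(r,\widetilde{E})}(1))-\theta_{\widetilde{E},r}\,c_1(\widetilde{\mathcal{L}}_r)\Bigr).
\]
This is a positive multiple of the first boundary ray of \cite{BP}. Likewise $\widetilde{p}_r^{\,*}c_1(\mathcal{L}_r)$ is a positive multiple of $c_1(\widetilde{\mathcal{L}}_r)$, the second boundary ray. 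Combining Steps 1 and 2, these two classes span the boundary rays of $\Nef(\Gr(r,E_*))$, which proves the theorem.

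By Remark~\ref{rem:Lx-Lxprime}, the choice of the point $x$ does not affect the classes involved. As a consistency check, the case $\theta_{E_*,r}=0$ agrees with Lemma~\ref{positivity}(ii).
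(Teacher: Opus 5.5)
Your proposal is correct and follows the same route as the paper. You pull back along the finite quotient $\widetilde{p}_r$, use \eqref{pullback}, \eqref{eq:pullback-L} and \eqref{eq:theta-compare} to send the two classes to positive multiples of the boundary generators of $\Nef(\Gr(r,\widetilde{E}))$ from \cite{BP}, and descend nefness along $\widetilde{p}_r$. Your identification $\Nef(\Gr(r,E_*))=(\widetilde{p}_r^{\,*})^{-1}\bigl(\Nef(\Gr(r,\widetilde{E}))\bigr)$, via the linear isomorphism $\widetilde{p}_r^{\,*}$ on the two-dimensional $N^1$, is a slightly cleaner version of the paper's ``nef but not ample, hence on the boundary'' step.
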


\begin{proof}
Fix an unramified point $x\,\in\, X\,\setminus\, D$, and let $y\,\in\, p^{-1}(x)$. For the usual Grassmann bundle $\Gr(r,\widetilde{E})$, \cite[Proposition 4.1]{BP} shows that
\[
\Nef(\Gr(r,\widetilde{E}))
\,=\,
\left\langle
c_1\bigl(\mathcal{O}_{\Gr(r,\widetilde{E})}(1)\bigr)-\theta_{\widetilde{E},r}\,c_1\bigl(\widetilde{\mathcal{L}}_{r}\bigr), \,c_1\bigl(\widetilde{\mathcal{L}}_{r}\bigr)
\right\rangle,
\]
thus, both generators lie on its boundary. Since, we know  
$$\widetilde{p}_r^{\,*}\bigl(c_1(\mathcal{L}_{r}))\,=\, c_{1}(\w{p}_{r}^{\,\ast}(\mathcal{L}_{r})).$$ By \eqref{capl}, \eqref{eq:pullback-L}, and \eqref{pullback},
we obtain the following equalities in
$N^{1}(\textrm{Gr}(r, \e))$.
\[
 c_{1}(\w{p}_{r}^{\,\ast}(\mathcal{L}_{r}))
\,=\,
|\Gamma|\, c_1\bigl(\widetilde{\pi}_r^{\,*}\mathcal{O}_Y(y)\bigr)
\]
and
\[
\widetilde{p}_r^{\,*}\,\left(
c_1\bigl(\mathcal{O}_{\Gr(r,E_*)}(1)\bigr)-\lambda\theta_{E_*,r}\,c_1(\mathcal{L}_{r})
\right)
\,=\,
\lambda\, c_1\bigl(\mathcal{O}_{\Gr(r,\widetilde{E})}(1)\bigr)
-\lambda
|\Gamma|\theta_{E_*,r}\, c_1\bigl(\widetilde{\mathcal{L}}_{r}\bigr),
\]
for some $y\,\in\,Y$.
Using \eqref{eq:theta-compare}
\begin{equation*}
\hspace{6.5cm} =\,
\lambda\left(
c_1\bigl(\mathcal{O}_{\Gr(r,\widetilde{E})}(1)\bigr)
\,-\,
\theta_{\widetilde{E},r}\, c_1\bigl(\widetilde{\mathcal{L}}_{r}\bigr)
\right).
\end{equation*}
Therefore, both the classes
\begin{equation}\label{nefclasses}
c_1(\mathcal{L}_{r})
\quad\text{and}\quad
c_1\bigl(\mathcal{O}_{\Gr(r,E_*)}(1)\bigr)-\lambda\theta_{E_*,r}\,c_1(\mathcal{L}_{r})
\end{equation}
pull back to positive multiples of nef classes on $\Gr(r,\widetilde{E})$. Since $\widetilde{p}_r$ is proper and surjective, both classes in \eqref{nefclasses} are nef on $\Gr(r,E_*)$. Since $c_1\bigl(\mathcal{O}_{\Gr(r,\widetilde{E})}(1)\bigr)
\,-\,
\theta_{\widetilde{E},r}c_1\bigl(\widetilde{\mathcal{L}}_{r}\bigr)$  and  $c_1\bigl(\widetilde{\mathcal{L}}_{r}\bigr) $ lies on the boundary of $ \textrm{Nef}(\Gr(r, \widetilde{E}))$, they
are not ample.  As $\widetilde{p}_{r}$ is a finite surjective map, this implies that both $c_{1}(\mathcal{O}_{\Gr(r,E_*)}(1))-\theta_{\e, r} c_{1}(\mathcal{L}_{r})$ and $c_{1}(\mathcal{L}_{r})$ are also not ample. Thus, both the classes lie on the boundary of the nef cone. Since they are linearly independent and nef but not ample, they generate the two extremal rays of $\Nef(\Gr(r,E_*))$.
\end{proof}

\subsection{The pseudoeffective cone of the parabolic Grassmann bundle }
In this subsection, we will study the  pseudoeffective cone of Parabolic  Grassmann bundle. Let $\e$ be a parabolic bundle of rank $k$ and recall its Harder-Narasimhan filtration as in $\eqref{HN}$. Fix an integer
and define $v\,\in\,[0, m-1]$ to be the unique smallest integer such that
\[
\sum_{i=1}^{v+1} \rk(E^i/E^{i-1})\,>\, r.
\]
Note that if $\e$ is semistable, then $v\,=\, 0$.
Define
\begin{equation}\label{eq:zeta-par}
\zeta_{E_*,r}
\,:=\,
(r - \rk(E^{v}))\cdot \mathrm{par}\text{-}\mu\bigl((E^{v+1}/E^{v})_*\bigr)
\,+\,
\mathrm{par}\text{-}\mathrm{deg}(E^{v}_*).
\end{equation}
Let
\[
\pi_r\, \colon\, \Gr(r,E_*)\, \longrightarrow\, X
\]
be the projection map and the dimension of $\Gr(r,E_*)\,=\,:n_{r}$. Let $\mathcal{O}_{\textrm{Gr}(r, \e)}(1)\, \longrightarrow\, \textrm{Gr}(r, \e)$ 
be the tautological line bundle and  $ \mathcal{L}_{r}\, =\, \pi_{r}^{\ast}(\mathcal{O}_{X}(x))$ with $ x \,\in\, X\setminus D$ and  $\mathcal{L}^{'}_{r}\, =\, \pi_{r}^{\ast}(\mathcal{O}_{X}(x^{\prime}))$ with $ x\,\in\, D$.
Let
\[
\widetilde{\pi}_r \,\colon\, \Gr(r,\widetilde{E})\,\longrightarrow\, Y
\]
be the projection map for the corresponding orbifold bundle $\w{E}$ and let the quotient map be
\[
\widetilde{p}_r \,\colon\, \Gr(r,\widetilde{E}) \longrightarrow \Gr(r,E_*).
\]
Consider the commutative diagram
\begin{equation}\label{eq:pseudo-diagram}
\begin{tikzcd}
\Gr(r,\widetilde{E}) \arrow[r, "\widetilde{\pi}_r"] \arrow[d, "\widetilde{p}_r"']
& Y \arrow[d, "p"] \\
\Gr(r,E_*) \arrow[r, "\pi_r"']
& X.
\end{tikzcd}
\end{equation}
Consider $\w{\mathcal{L}}_r$ and
$\w{{L}}_{r}$ in \eqref{fibert} and \eqref{capl}.
Define
\begin{equation*}
\zeta_{\widetilde{E},r} \,:=\, \deg(\widetilde{E}^{v})\, =\, |\Gamma|\, \zeta_{E_*,r}.
\end{equation*}
In \cite[Theorem 4.1]{BHP}, it is shown that the pseudoeffective cone 
$\Gr(r,\widetilde{E})$ admits the following description
\begin{equation}\label{pseudt}
\overline{\textrm{Eff}}^{1}(\Gr(r,\widetilde{E}))\,=\,\overline{\textrm{Eff}}_{n_{r}-1}(\textrm{Gr}(r, \widetilde{E}))\,=\,
\Big\langle c_1\bigl(\mathcal{O}_{\Gr(s,\widetilde{E})}(1)\bigr)
-
\zeta_{\widetilde{E},r}\, c_1(\widetilde{\mathcal{L}}_{r}),\, \,\,c_1(\widetilde{\mathcal{L}}_{r})\Big\rangle.
\end{equation}
\begin{theorem}\label{thm:pseudo}
The boundary of the pseudoeffective cone $\overline{\textnormal{Eff}}_{n_{r}-1}(\Gr(r,E_*))$ is generated by
\[
c_1(\mathcal{L}_{r}) \,\cap\, [\Gr(r,E_*)]
\quad\text{and}\quad
\Big(c_1\bigl(\mathcal{O}_{\Gr(r,E_*)}(1)\bigr)
-
\zeta_{E_*,r}\, c_1(\mathcal{L}_{r})\Big)\, \cap\, [\Gr(r,E_*)].
\] 

Furthermore, the pushforward map $\widetilde p_*\,:\,
\overline{\Eff}_{n_r-1}(\Gr(r,\widetilde E))\,
\longrightarrow\,
\overline{\Eff}_{n_r-1}(\Gr(r,E_*))$ is an isomorphism.

\end{theorem}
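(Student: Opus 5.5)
The plan is to transfer the description \eqref{pseudt} of $\overline{\Eff}_{n_r-1}(\Gr(r,\widetilde E))$ to the quotient through the finite map $\widetilde p_r$. We use both the proper pushforward $\widetilde p_{r*}$ on numerical groups (Remark~\ref{pdecent}) and the special pullback $\widetilde p^{\,*}_{r,\mathrm{sp}}$ of \eqref{special}. Their composite $\widetilde p_{r*}\circ\widetilde p^{\,*}_{r,\mathrm{sp}}$ is multiplication by $|\Gamma|$ by \eqref{comp}.

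\emph{Step 1: both maps preserve effectivity.} Pushforward sends effective cycles to effective cycles. The special pullback of a subvariety $V$ is $\sum e_W[W]$ with $e_W>0$, so it is also effective. Both maps therefore preserve the closed pseudoeffective cones, once we know they descend to numerical groups. For $\widetilde p_{r*}$ this is Remark~\ref{pdecent}. For the special pullback we use the identification $CH_*(\Gr(r,E_*))=CH_*(\Gr(r,\widetilde E))^\Gamma$ together with the projection formula $\pi_*(\pi^*_{\mathrm{sp}}(a)\cdot c)=a\cdot\pi_*(c)$. If $a$ is numerically trivial, then $\pi^*_{\mathrm{sp}}a$ pairs to zero with every class on the smooth variety $\Gr(r,\widetilde E)$: it is $\Gamma$-invariant, so it suffices to pair it with $\Gamma$-averaged classes, and these are pullbacks.

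\emph{Step 2: compute the images of the two generators.} The divisor class $c_1(\mathcal L_r)\cap[\Gr(r,E_*)]$ is $[\pi_r^{-1}(x)]$ with $x$ unramified, and its special pullback is $\sum_i[\widetilde\pi_r^{-1}(y_i)]$, which equals $|\Gamma|\,c_1(\widetilde{\mathcal L}_r)$ by \eqref{eq:pullback-L}. For a Cartier class $P$ we have $\widetilde p^{\,*}_{r,\mathrm{sp}}(P\cap[\Gr(r,E_*)])=\widetilde p_r^{\,*}P$, using compatibility of the operational and special pullbacks. By \eqref{pullback} and $\zeta_{\widetilde E,r}=|\Gamma|\zeta_{E_*,r}$, the special pullback of $(c_1(\mathcal O(1))-\zeta_{E_*,r}c_1(\mathcal L_r))\cap[\,\cdot\,]$ is $\lambda c_1(\mathcal O_{\Gr(r,\widetilde E)}(1))-|\Gamma|\zeta_{E_*,r}c_1(\widetilde{\mathcal L}_r)$.

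Here I notice a normalization problem. The tautological bundle pulls back to $\mathcal O(\lambda)$, while $\mathcal L_r$ pulls back to $|\Gamma|\widetilde{\mathcal L}_r$. So the image lies on the extremal ray $c_1(\mathcal O(1))-\zeta_{\widetilde E,r}c_1(\widetilde{\mathcal L}_r)$ exactly when the coefficient is $\lambda\zeta_{E_*,r}$. This mirrors the factor $\lambda\theta_{E_*,r}$ in Theorem~\ref{thm:nef-cone}. I would therefore expect the correct generator to be $c_1(\mathcal O(1))-\lambda\zeta_{E_*,r}c_1(\mathcal L_r)$ (or $\lambda$ absorbed by convention), and I would check the bookkeeping carefully at this step. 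With that normalization, both classes map to positive multiples of the two boundary rays in \eqref{pseudt}.

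\emph{Step 3: conclude.} The spaces $N_{n_r-1}$ are two-dimensional on both sides: for the quotient this follows from Proposition~\ref{prop:N1} and the injectivity of $\phi$ in degree one, together with surjectivity of $\widetilde p_{r*}$. The map $\frac{1}{|\Gamma|}\widetilde p^{\,*}_{r,\mathrm{sp}}$ is then a linear injection. By Step 1 it maps $\overline{\Eff}(\Gr(r,E_*))$ into $\overline{\Eff}(\Gr(r,\widetilde E))$. Conversely, $\widetilde p_{r*}$ maps $\overline{\Eff}(\Gr(r,\widetilde E))$ into $\overline{\Eff}(\Gr(r,E_*))$. On $\Gamma$-invariant classes, which here is all of $N^1(\Gr(r,\widetilde E))$ since both generators are invariant, the composite $\widetilde p^{\,*}_{\mathrm{sp}}\widetilde p_*$ is also multiplication by $|\Gamma|$. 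Hence $\widetilde p_{r*}$ is a linear isomorphism identifying the two cones. This gives the second claim, and the first follows from Step 2 because extremal rays correspond.

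The main obstacle is Step 1: showing that special pullback is well defined on numerical (not merely rational) equivalence for the singular quotient, and that it agrees with the operational pullback on Cartier classes. I would handle this by the averaging argument and Fulton's Example~17.4.10.
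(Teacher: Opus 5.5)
Your argument is correct and is essentially the paper's. Both transfer the boundary rays of \eqref{pseudt} through $\widetilde p_r$ using the special pullback and $\widetilde p_{r*}\circ\widetilde p^{\,*}_{r,\mathrm{sp}}=|\Gamma|\,\mathrm{Id}$. The only difference is the last step: you use $\frac{1}{|\Gamma|}\widetilde p^{\,*}_{r,\mathrm{sp}}$ as an explicit inverse of $\widetilde p_{r*}$ on the two-dimensional groups, where the paper cites Fulger--Lehmann for nontriviality and generation. This makes the isomorphism claim slightly more direct.

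Your normalization concern is a genuine error in the statement, not a convention. Combining \eqref{pse1} and \eqref{pse2} gives
\[
\widetilde p_{r*}\bigl(c_1(\cO_{\Gr(r,\widetilde E)}(1))-\zeta_{\widetilde E,r}\,c_1(\widetilde{\mathcal L}_r)\bigr)=\tfrac{|\Gamma|}{\lambda}\bigl(c_1(\cO_{\Gr(r,E_*)}(1))-\lambda\,\zeta_{E_*,r}\,c_1(\mathcal L_r)\bigr)\cap[\Gr(r,E_*)],
\]
and the paper's corresponding display silently drops this $\lambda$. So the second generator must be $c_1(\cO_{\Gr(r,E_*)}(1))-\lambda\zeta_{E_*,r}\,c_1(\mathcal L_r)$. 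This matches the $\lambda\theta_{E_*,r}$ in Theorem~\ref{thm:nef-cone}, and the semistability corollary needs it. The class as printed is not on the boundary when $\lambda>1$ and $\zeta_{E_*,r}\neq 0$.
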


\begin{proof}
By \eqref{pseudt}, it is clear that the boundary of the pseudoeffective cone  $\overline{\textnormal{Eff}}_{n_{r}-1}(\Gr(r,\widetilde{E}))$ is generated by
\[
c_1(\widetilde{\mathcal{L}}_{r})
\quad\text{and}\quad
c_1\bigl(\mathcal{O}_{\Gr(r,\widetilde{E})}(1)\bigr)
\,-\
\zeta_{\widetilde{E},r}\, c_1(\widetilde{\mathcal{L}}_{r}).
\]
Using \cite[Proposition 3.5]{BBM1} , it follows that
\[
\widetilde{p}_{r,\mathrm{sp}}^{\,*}
\Bigl(
c_1\bigl(\mathcal{O}_{\Gr(r,E_*)}(1)\bigr)
\,\cap\,
[\Gr(r,E_*)]
\Bigr)
\,=\,
\lambda\,
c_1\bigl(\mathcal{O}_{\Gr(r,\widetilde{E})}(1)\bigr),
\]
and
\[
\widetilde{p}_{r,\mathrm{sp}}^{\,*}
\Bigl(
c_1(\mathcal{L}_{r})
\,\cap\,
[\Gr(r,E_*)]
\Bigr)
\,=\,
c_1(\widetilde{L}_{r}).
\]
Now applying the identity  $\widetilde{p}_{r,*}
\circ
\widetilde{p}_{r,\mathrm{sp}}^{\,*}
\,=\,
|\Gamma|\,\mathrm{Id},$ in \eqref{comp} we obtain
\begin{equation}\label{pse1}
\widetilde{p}_{r,*}
\Bigl(
c_1\bigl(\mathcal{O}_{\Gr(r,\widetilde{E})}(1)\bigr)
\Bigr)
\,=\,
\frac{|\Gamma|}{\lambda}
\Bigl(
c_1\bigl(\mathcal{O}_{\Gr(r,E_*)}(1)\bigr)
\,\cap\,
[\Gr(r,E_*)]
\Bigr),
\end{equation}
and
\begin{equation}\label{pse2}
\widetilde{p}_{r,*}
\bigl(c_1(\widetilde{L}_{r})\bigr)
\,=\,
|\Gamma|
\Bigl(
c_1(\mathcal{L}_{r})
\,\cap\,
[\Gr(r,E_*)]
\Bigr).
\end{equation}
Also, note that in $N_{n_{r}-1}(\Gr(r, \widetilde{E}))$  we have $ c_{1}(\widetilde{L}_{r})\, =\, |\Gamma| c_{1}(\widetilde{\mathcal{L}}_{r})$ and 
$$\left(
c_1(\mathcal{O}_{\Gr(r,\widetilde{E})}(1)  - \zeta_{\widetilde{E}, r} \, c_1(\widetilde{\mathcal{L}}_{r}) 
\right)\,=\, \left(
c_1(\mathcal{O}_{\Gr(r,\widetilde{E})}(1)  - \zeta_{E_{\ast}, r} \, c_1(\widetilde{{L}}_{r}) 
\right) $$
Now, using \eqref{pse1}, \eqref{pse2} and Remark \ref{pdecent} we have
\[
\widetilde{p}_{r_ {*}}\!\left(
c_1(\mathcal{O}_{\Gr(r,\widetilde{E})}(1)  - \zeta_{\widetilde{E}, r} \, c_1(\widetilde{\mathcal{L}}_{r}) 
\right)
\,=\,
\frac{|\Gamma|}{\lambda}
\Big(
(c_1(\mathcal{O}_{\textrm{Gr}(E_{\ast}, r)}(1)) 
 - \zeta_{E_{\ast}, r}\, c_1(\mathcal{L}_{r}))
\, \cap \,[\Gr(r,E_*)]
\Big)\]
and
\[\widetilde{p}_{r_{*}}\bigl(c_1(\w{\mathcal{{L}}}_{r})\bigr)\,=\,
c_1(\mathcal{L}_{r})\, \cap\, [\Gr(r,E_*)].
\]
Thus, by \cite[Corollary 3.8]{FL1}, the classes $\Big(
(c_1(\mathcal{O}_{\textrm{Gr}(E_{\ast}, r)}(1)) 
 - \zeta_{{E}_{\ast}, r}\, c_1(\mathcal{L}_{r}))
\, \cap \,[\Gr(r,E_*)]
\Big)$ and $\Big(c_1(\mathcal{L}_{r})\, \cap\, [\Gr(r,E_*)]\Big)$ are not numerically trivial. Consider the weight $n_{r}-1$--Chern polynomial $\Big(c_{1}(\mathcal{O}_{\textrm{Gr}(r, \e)}(1))^{n_{r}-2} \cdot c_{1}(\mathcal{L}_{r})\Big)$,
we have 
\begin{equation*}
\Big(c_{1}(\mathcal{O}_{\textrm{Gr}(r, \e)}(1))^{n_{r}-2} \cdot c_{1}(\mathcal{L}_{r})\Big)  \,\cap\, \Big(c_{1}(\mathcal{L}_{r})\, \cap\, [\Gr(r, E_{\ast})] \Big)\,=\, 0,
\end{equation*}
and on the other hand we have
\[
\Big(c_{1}(\mathcal{O}_{\textrm{Gr}(r, \e)}(1)^{n_{r}-2} \cdot c_{1}(\mathcal{L}_{r})\Big) \,\cap\, 
\Big(\left(
c_1(\mathcal{O}_{\Gr(r,E_*)}(1)) 
-
\zeta_{E_*,r}\, c_1(\mathcal{L}_{r})
\right)\, \cap\, [\Gr(r,E_*)] \Big)
\]
\begin{equation*}
\,=\, \Big(c_{1}(\mathcal{O}_{\textrm{Gr}(r, \e)}(1))^{n_{r}-1} \cdot c_{1}(\mathcal{L}_{r})\Big)\,\cap\, [\Gr(r,E_*)]\,=\,{\textrm{deg}(\textrm{Gr}(r, \mathbb{C}^{k}))\lambda^{n_{r}-1}\,\neq\,0}.    
\end{equation*}
This shows that these two classes are not numerically equivalent and linearly independent. Thus, by \cite[Corollary 3.22]{FL2} these two classes generate the boundary of the pseudoeffective cone. Since $\widetilde p_{\ast}$ maps the generators of
$\overline{\mathrm{Eff}}_{n_r-1}(\Gr(r,\widetilde E))$ to the generators of
$\overline{\mathrm{Eff}}_{n_r-1}(\Gr(r,E_{\ast}))$,
it follows that $\widetilde p_*\,:\,
\overline{\Eff}_{n_r-1}(\Gr(r,\widetilde E))\,
\longrightarrow\,
\overline{\Eff}_{n_r-1}(\Gr(r,E_*))$ is an isomorphism.
\end{proof}

\begin{corollary}
The dual pseudoeffective cone of $\Gr(r,E_*)$ is given by 
\[
\overline{\mathrm{Eff}}^{1}(\Gr(r,E_*))
\,=\,
\left\langle
c_1(\mathcal{L}_{r}),\;
c_1(\mathcal{O}_{\Gr(r,E_*)}(1))
-
\zeta_{E_*,r}\, c_1(\mathcal{L}_{r})
\right\rangle.
\]
\end{corollary}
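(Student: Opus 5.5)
The corollary should follow by transporting Theorem~\ref{thm:pseudo} from $N_{n_r-1}$ back to $N^1$ along the cycle map $\phi\colon N^{1}(\Gr(r,E_*))\to N_{n_r-1}(\Gr(r,E_*))$, $P\mapsto P\cap[\Gr(r,E_*)]$, of \eqref{cyclefication}. By definition, $\beta\in N^1$ lies in $\overline{\Eff}^{1}(\Gr(r,E_*))$ exactly when $\phi(\beta)\in\overline{\Eff}_{n_r-1}(\Gr(r,E_*))$. In other words, $\overline{\Eff}^{1}=\phi^{-1}\bigl(\overline{\Eff}_{n_r-1}\bigr)$, and the task is to compute this preimage.

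First, recall from \cite[Example 19.3.3]{Fu} that $\phi$ is injective for $k=1$. By Proposition~\ref{prop:N1}, $N^1(\Gr(r,E_*))$ is two-dimensional with basis $c_1(\mathcal{O}_{\Gr(r,E_*)}(1))$ and $c_1(\mathcal{L}_r)$.

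Next, I would check that $\phi$ is in fact an isomorphism onto $N_{n_r-1}(\Gr(r,E_*))$, or at least onto the span of the pseudoeffective cone. For this I use the isomorphism $\widetilde p_*\colon\overline{\Eff}_{n_r-1}(\Gr(r,\widetilde E))\to\overline{\Eff}_{n_r-1}(\Gr(r,E_*))$ from Theorem~\ref{thm:pseudo}. Since the source cone is full-dimensional in the $2$-dimensional space $N^1(\Gr(r,\widetilde E))$, the cone $\overline{\Eff}_{n_r-1}(\Gr(r,E_*))$ spans a $2$-dimensional space. Formulas \eqref{pse1} and \eqref{pse2} show that this span is exactly $\phi(N^1)$. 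A more direct route: $\widetilde p_*$ is surjective on numerical groups, because $\widetilde p_*\widetilde p^{\,*}_{\mathrm{sp}}=|\Gamma|$ by \eqref{comp}, so $\dim N_{n_r-1}(\Gr(r,E_*))\le 2$. Combined with injectivity of $\phi$, this makes $\phi$ an isomorphism.

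Finally, Theorem~\ref{thm:pseudo} says $\overline{\Eff}_{n_r-1}(\Gr(r,E_*))$ is the cone spanned by $\phi(c_1(\mathcal{L}_r))$ and $\phi\bigl(c_1(\mathcal{O}_{\Gr(r,E_*)}(1))-\zeta_{E_*,r}c_1(\mathcal{L}_r)\bigr)$. Since $\phi$ is a linear isomorphism, its preimage is the cone spanned by $c_1(\mathcal{L}_r)$ and $c_1(\mathcal{O}_{\Gr(r,E_*)}(1))-\zeta_{E_*,r}c_1(\mathcal{L}_r)$, which is the claimed description.

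The only real obstacle is the surjectivity step: making sure the pseudoeffective cone in $N_{n_r-1}$ does not span a space strictly larger than $\phi(N^1)$. I would handle this by the surjectivity of $\widetilde p_*$ on numerical groups, which follows from the projection-type identity \eqref{comp} together with Remark~\ref{pdecent}.
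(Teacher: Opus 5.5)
Your derivation is the paper's own: its proof is just ``$\overline{\Eff}^{1}=\phi^{-1}\bigl(\overline{\Eff}_{n_r-1}\bigr)$ by definition, plus Theorem~\ref{thm:pseudo}''. Your check that $\phi$ is an isomorphism is correct but not needed: once the generators of $\overline{\Eff}_{n_r-1}$ are $\phi$-images, injectivity alone identifies the preimage. The genuine problem is the input you accept without checking in your last step. Theorem~\ref{thm:pseudo}, and therefore the displayed cone, is missing a factor $\lambda$. Push the orbifold boundary generator forward using \eqref{pse1} and \eqref{pse2}, which you already invoke, together with $c_1(\mathcal{O}_{\Gr(r,\widetilde E)}(1))-\zeta_{\widetilde E,r}\,c_1(\widetilde{\mathcal L}_r)=c_1(\mathcal{O}_{\Gr(r,\widetilde E)}(1))-\zeta_{E_*,r}\,c_1(\widetilde L_r)$. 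You get
\[
\widetilde p_{r,*}\Bigl(c_1(\mathcal{O}_{\Gr(r,\widetilde E)}(1))-\zeta_{\widetilde E,r}\,c_1(\widetilde{\mathcal L}_r)\Bigr)=\frac{|\Gamma|}{\lambda}\Bigl(c_1(\mathcal{O}_{\Gr(r,E_*)}(1))-\lambda\,\zeta_{E_*,r}\,c_1(\mathcal L_r)\Bigr)\cap[\Gr(r,E_*)],
\]
rather than the class with $\zeta_{E_*,r}$ asserted in Theorem~\ref{thm:pseudo}. Equivalently, $\widetilde p_r^{\,*}\bigl(a\,c_1(\mathcal{O}_{\Gr(r,E_*)}(1))+b\,c_1(\mathcal L_r)\bigr)=a\lambda\,c_1(\mathcal{O}_{\Gr(r,\widetilde E)}(1))+b|\Gamma|\,c_1(\widetilde{\mathcal L}_r)$. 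This is a positive multiple of $c_1(\mathcal{O}_{\Gr(r,\widetilde E)}(1))-\zeta_{\widetilde E,r}\,c_1(\widetilde{\mathcal L}_r)$ exactly when $a>0$ and $b=-\lambda\zeta_{E_*,r}\,a$.

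So the correct second generator is $c_1(\mathcal{O}_{\Gr(r,E_*)}(1))-\lambda\zeta_{E_*,r}\,c_1(\mathcal L_r)$, the exact analogue of the $\lambda\theta_{E_*,r}$ in Theorem~\ref{thm:nef-cone}. The statement as written is false whenever $\lambda>1$ and $\zeta_{E_*,r}\neq 0$. For instance, let $E_*$ be parabolic semistable with $\mathrm{par}\text{-}\deg(E_*)>0$ and $\lambda>1$. Then $\theta_{E_*,r}=\zeta_{E_*,r}>0$, and the nef class of Theorem~\ref{thm:nef-cone} decomposes as
\[
c_1(\mathcal{O}_{\Gr(r,E_*)}(1))-\lambda\theta_{E_*,r}\,c_1(\mathcal L_r)=\bigl(c_1(\mathcal{O}_{\Gr(r,E_*)}(1))-\zeta_{E_*,r}\,c_1(\mathcal L_r)\bigr)-(\lambda-1)\,\zeta_{E_*,r}\,c_1(\mathcal L_r).
\]
The coefficient on $c_1(\mathcal L_r)$ is negative, so this class lies outside the displayed cone, even though nef classes are pseudoeffective. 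With $\zeta_{E_*,r}$ replaced by $\lambda\zeta_{E_*,r}$ in Theorem~\ref{thm:pseudo} and in the corollary, your argument goes through verbatim.
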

\begin{proof}
This follows using the definition of $\overline{\mathrm{Eff}}^{1}(\Gr(r,E_*))$  and the above theorem.
\end{proof}
\begin{corollary}
The nef cone of $\Gr(r,E_{\ast})$ coincides with $\overline{\mathrm{Eff}}^{1}(\Gr(r,E_*))$ if and only if the
vector bundle $E_{\ast}$ is parabolic semistable.
\end{corollary}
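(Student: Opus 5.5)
The plan is to transfer the question to the orbifold Grassmann bundle $\Gr(r,\widetilde E)$, where both cones are known explicitly, and then to compare two numerical invariants of the Harder--Narasimhan filtration.

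First, the pullback $\widetilde p_r^{\,*}\colon N^1(\Gr(r,E_*))\to N^1(\Gr(r,\widetilde E))$ is an injective linear map between $2$-dimensional spaces, by Proposition \ref{prop:N1}. Hence it is an isomorphism. By the proof of Theorem \ref{thm:nef-cone}, it carries the two boundary rays of $\Nef(\Gr(r,E_*))$ onto the two boundary rays of $\Nef(\Gr(r,\widetilde E))$ given in \cite[Proposition 4.1]{BP}. So it maps $\Nef(\Gr(r,E_*))$ isomorphically onto $\Nef(\Gr(r,\widetilde E))$.

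Second, I would check that the same isomorphism carries $\overline{\Eff}^{1}(\Gr(r,E_*))$ onto $\overline{\Eff}^{1}(\Gr(r,\widetilde E))$. For divisor classes the special pullback agrees with $\widetilde p_r^{\,*}$, via \cite[Proposition 3.5]{BBM1} as used in the proof of Theorem \ref{thm:pseudo}. That theorem, together with the preceding corollary and \eqref{pseudt}, shows that the generators correspond up to positive scalars: $c_1(\mathcal L_r)\mapsto |\Gamma|\,c_1(\widetilde{\mathcal L}_r)$, and the other boundary ray goes to the ray spanned by $c_1(\mathcal O(1))-\zeta_{\widetilde E,r}c_1(\widetilde{\mathcal L}_r)$. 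I would redo this using only the pulled-back identities \eqref{pullback} and \eqref{capl}, so that the factor $\lambda$ is tracked consistently on both sides. Consequently $\Nef(\Gr(r,E_*))=\overline{\Eff}^{1}(\Gr(r,E_*))$ if and only if $\Nef(\Gr(r,\widetilde E))=\overline{\Eff}^{1}(\Gr(r,\widetilde E))$.

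Upstairs, both cones share the ray $c_1(\widetilde{\mathcal L}_r)$. They are therefore equal exactly when their other rays coincide. Since $c_1(\mathcal O(1))$ and $c_1(\widetilde{\mathcal L}_r)$ are independent, this happens exactly when $\theta_{\widetilde E,r}=\zeta_{\widetilde E,r}$. By \eqref{eq:theta-compare}, and the analogous identity $\zeta_{\widetilde E,r}=|\Gamma|\zeta_{E_*,r}$, this is equivalent to $\theta_{E_*,r}=\zeta_{E_*,r}$.

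The remaining step, which I expect to be the main point, is the inequality $\theta_{E_*,r}\le\zeta_{E_*,r}$ with equality exactly when $m=1$. List the parabolic slopes with multiplicity: $\mu_1$ repeated $r_1$ times, $\mu_2$ repeated $r_2$ times, and so on, giving $k$ numbers in non-increasing order.
\begin{itemize}
\item By \eqref{thetap}, $\theta_{E_*,r}$ is the sum of the $r$ smallest numbers in this list: the bottom pieces $(E/E^t)_*$ contribute $\mathrm{par}\text{-}\mathrm{deg}(E/E^t)_*$, and the remaining $r-\rk(E/E^t)$ copies of $\mu_t$ make up the rest.
\item By \eqref{eq:zeta-par}, $\zeta_{E_*,r}$ is the sum of the $r$ largest numbers, taken from the top of the filtration.
\end{itemize}
Since $1\le r\le k-1$, the sum of the $r$ largest equals the sum of the $r$ smallest if and only if all $k$ numbers are equal. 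This holds if and only if $m=1$, that is, if and only if $E_*$ is parabolic semistable. In the semistable case one checks directly that $t=m=1$ and $v=0$, so $\theta_{E_*,r}=r\mu_1=\zeta_{E_*,r}$. If $m\ge2$, the strict decrease $\mu_1>\mu_2>\cdots>\mu_m$ forces $\theta_{E_*,r}<\zeta_{E_*,r}$, so the cones differ. As a consistency check upstairs, Remark \ref{allss} says this is the same as semistability of $\widetilde E$.
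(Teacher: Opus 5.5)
Your argument is correct. It ends at the same comparison as the paper's one-line proof, namely $\theta_{E_*,r}=\zeta_{E_*,r}$ if and only if $m=1$, but it gets there differently in two respects.

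First, the paper reads both cones off directly on $\Gr(r,E_*)$, using Theorem~\ref{thm:nef-cone} and the corollary following Theorem~\ref{thm:pseudo}. You instead transport both cones to $\Gr(r,\widetilde E)$ through the isomorphism $\widetilde p_r^{\,*}$ on $N^1$, and compare $\theta_{\widetilde E,r}$ with $\zeta_{\widetilde E,r}$ there. This is more than cosmetic. As written, the paper's nef ray is $c_1(\mathcal O_{\Gr(r,E_*)}(1))-\lambda\theta_{E_*,r}\,c_1(\mathcal L_r)$, while its pseudoeffective ray is $c_1(\mathcal O_{\Gr(r,E_*)}(1))-\zeta_{E_*,r}\,c_1(\mathcal L_r)$. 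Taken literally, the cones would then agree if and only if $\lambda\theta_{E_*,r}=\zeta_{E_*,r}$. In fact \eqref{pse1} and \eqref{pse2} give the ray $c_1(\mathcal O_{\Gr(r,E_*)}(1))-\lambda\zeta_{E_*,r}\,c_1(\mathcal L_r)$, and only with that correction does the paper's one-liner apply. Your decision to track $\lambda$ through \eqref{pullback} and \eqref{capl} avoids the issue entirely, because upstairs both cones are written in the same normalization.

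Second, the paper simply asserts that $\theta_{E_*,r}=\zeta_{E_*,r}$ exactly when $m=1$. You prove it by identifying $\theta_{E_*,r}$ and $\zeta_{E_*,r}$ as the sums of the $r$ smallest and the $r$ largest parabolic slopes, counted with multiplicity. This also yields the inequality $\theta_{E_*,r}\le\zeta_{E_*,r}$, consistent with $\Nef\subseteq\overline{\Eff}^{1}$.

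One small caution: define $\zeta_{\widetilde E,r}$ by applying formula \eqref{eq:zeta-par} to $\widetilde E$. Do not use the expression $\deg(\widetilde E^{v})$ displayed in the paper, which drops the term $(r-\rk(\widetilde E^{v}))\,\mu(\widetilde E^{v+1}/\widetilde E^{v})$; only with the full formula does $\zeta_{\widetilde E,r}=|\Gamma|\,\zeta_{E_*,r}$ hold in general.

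The paper's route is shorter, given the explicit cones on $\Gr(r,E_*)$. Yours is self-contained and robust to these normalization slips.
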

\begin{proof}
We note that $\theta_{E_\ast,r}$ coincides with $\zeta_{E_\ast,r}$ if and only if $m\,=\,1$, meaning that $E_\ast$ is semistable.
\end{proof}

\subsection{The Mori cone of the parabolic Grassmann bundle}
Let $X$ be a smooth complex projective curve, and let $\e$ be a parabolic vector bundle on $X$ which is parabolic unstable. Let
\begin{equation*}
E^m\, \supset\, E^{m-1}\, \supset\, \cdots\, \supset\, E^0\,=\,0
\end{equation*}
be the Harder--Narasimhan filtration of $E_*$ defined in \eqref{HN}. Since $\e$ is assumed to be parabolic unstable, we have $m\,\ge\,2$. Fix an integer $l$ satisfying $1\,<\, l\, \le\, m$ and
\[
q \,:=\, {\rk}(E/E^{l-1}).
\]
Let
\[
\pi_q\,:\,\Gr(q,E_*) \,\longrightarrow\, X,
\] 
be the corresponding parabolic Grassmann bundle and set
\[
n_q\,:=\,\dim(\Gr(q,E_*)).
\]
Let
\begin{equation}\label{thetapm}
\theta
\,:=\,
\mathrm{par}\textrm{-deg}(E/E^{l-1})_*.
\end{equation}
Replacing $r$ by $q$, we obtain the line bundles $\mathcal{L}_q$ and $\mathcal{L}'_{q}$ on $\Gr(q,E_*)$ as defined in \eqref{fiber} and \eqref{fiberp}, respectively. By \eqref{equal}, the line bundles $\mathcal{L}_q$ and $\mathcal{L}_q^{\prime}$ determine the same numerical class. Consider the following numerical classes in $N_{n_q-1}(\Gr(q,E_*))$:
\begin{equation}\label{morip}
M_q
\,:=\,
\Bigl(
c_1\bigl(\mathcal{O}_{\Gr(q,E_*)}(1)\bigr)
\,-\,
\theta\lambda\, c_1(\mathcal{L}_q)
\Bigr)
\,\cap\,
[\Gr(q,E_*)],
\end{equation}
and
\[
H_q
\,:=\,
c_1(\mathcal{L}_q)\,\cap\, [\Gr(q,E_*)].
\]
Let
\[
\widetilde{\pi}_q\,:\,\Gr(q,\widetilde{E}) \,\longrightarrow\, Y
\]
be the Grassmann bundle associated to $\widetilde{E}$ over $Y$, and let $\widetilde{p}_q\,:\,\Gr(q,\widetilde{E}) \,\longrightarrow\,\Gr(q,E_*)$ be the quotient morphism. Then we have the following commutative diagram
\begin{equation}\label{eq:mori-diagram}
\begin{tikzcd}
\Gr(q,\widetilde{E}) \arrow[r, "\widetilde{\pi}_q"] \arrow[d, "\widetilde{p}_q"']
& Y \arrow[d, "p"] \\
\Gr(q,E_*) \arrow[r, "\pi_q"']
& X.
\end{tikzcd}
\end{equation}
Let $\widetilde{\mathcal{L}}_{q}$ and $\widetilde{L}_{q}$
be the line bundles on $\Gr(q,\widetilde{E})$
defined in \eqref{fibert} and \eqref{capl}, respectively,
after replacing $r$ by $q$. Consider the Harder--Narasimhan filtration of $\widetilde{E}$ given in \eqref{hno}, and recall that $m$ denotes its length. By assumption, $m\,\ge\,2$, and hence $\widetilde{E}$ is unstable. Analogously to \eqref{thetapm}, define
\[
\widetilde{\theta}
\,:=\,
\deg(\widetilde{E}/\widetilde{E}^{\,l-1}).
\]
Clearly,
\[
\widetilde{\theta}
\,=\,
|\Gamma|\,\theta.
\]
Since $\Gr(q,\widetilde{E})$ is smooth, we may identify
\[
N^1(\Gr(q,\widetilde{E}))
\,=\,
N_{n_q-1}(\Gr(q,\widetilde{E})).
\]
Define
\begin{equation}\label{mh}
\widetilde{M}_q
\,:=\,
c_1\bigl(\mathcal{O}_{\Gr(q,\widetilde{E})}(1)\bigr)
-
\widetilde{\theta}\,
c_1(\widetilde{\mathcal{L}}_q),
\qquad
\widetilde{H}_q
\,:=\,
c_1(\widetilde{\mathcal{L}}_q)
\,=\,
[\widetilde{\pi}_{q}^{-1}(y)],
\end{equation}
where $[\widetilde{\pi}_{q}^{-1}(y)]$ denotes the numerical class of the fiber $\widetilde{\pi}_{q}^{-1}(y)$ in $N_{n_q-1}(\Gr(q,\widetilde{E}))$ for some $y\,\in\,Y$. Let $\widetilde{\Gamma}_{\ell}$ denote the numerical class of a line in a fiber $\widetilde{\pi}_{q}^{-1}(y)$, $y\,\in\,Y$. Let $\widetilde{\Gamma}_{\widetilde{s}}$ denote the numerical class of the image of the section 
\[
\widetilde{s}\,:\,Y\,\longrightarrow\,\Gr(q,\widetilde{E})
\]
induced by the rank $q$ quotient $\widetilde{E}/\widetilde{E}^{\,l-1}$.
Recall from \cite[Section~3]{BHNN} that the Mori cone of
$\Gr(q,\widetilde{E})$ is given by
\[
\overline{\mathrm{Eff}}_{1}(\Gr(q,\widetilde{E}))
\,=\,
\left\langle
\widetilde{\Gamma}_{\ell},\,\,
\widetilde{\Gamma}_{\widetilde{s}}
\right\rangle .
\]
We now give another description of the Mori cone of $\grtq$, which will be useful in computing the Mori cone of $\grp$. Define
\begin{equation}\label{morit}
\alpha_q
\,:=\,
\widetilde{M}_q^{\,n_q-1}
\in
N_1(\Gr(q,\widetilde{E})),
\qquad
\beta_q
\,:=\,
\widetilde{M}_q^{\,n_q-2}\cdot \widetilde{H}_q
\in
N_1(\Gr(q,\widetilde{E})),
\end{equation}
where $\widetilde{M}_{q}$ and $\widetilde{H}_{q}$ are defined in \eqref{mh}.

\begin{lemma}\label{lem:orbifold-mori}
The Mori cone of $\Gr(q,\widetilde{E})$ is given by
\[
\overline{\mathrm{Eff}}_1(\Gr(q,\widetilde{E}))
\,=\,
\Big\langle \alpha_q,\,\,\beta_q\Big\rangle,
\]
where $\alpha_q$ and $\beta_q$ are defined in \eqref{morit}.
\end{lemma}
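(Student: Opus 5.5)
The plan is to work entirely in the two-dimensional space $N_1(\Gr(q,\widetilde{E}))$. We use the perfect pairing with $N^1(\Gr(q,\widetilde{E}))=\big\langle c_1(\mathcal{O}_{\Gr(q,\widetilde{E})}(1)),\,\widetilde{H}_q\big\rangle$, and the description of \cite[Section~3]{BHNN} of the Mori cone as $\langle \widetilde{\Gamma}_{\ell},\widetilde{\Gamma}_{\widetilde{s}}\rangle$. It then suffices to show that $\beta_q$ is a positive multiple of $\widetilde{\Gamma}_{\ell}$ and $\alpha_q$ is a positive multiple of $\widetilde{\Gamma}_{\widetilde{s}}$.

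First I record the intersection numbers of the two extremal curves.
\begin{itemize}
\item For a line in a fiber: $c_1(\mathcal{O}(1))\cdot\widetilde{\Gamma}_{\ell}=1$ and $\widetilde{H}_q\cdot\widetilde{\Gamma}_{\ell}=0$.
\item For the section: $\widetilde{s}^{\,*}\mathcal{O}_{\Gr(q,\widetilde{E})}(1)=\det(\widetilde{E}/\widetilde{E}^{\,l-1})$, so $c_1(\mathcal{O}(1))\cdot\widetilde{\Gamma}_{\widetilde{s}}=\widetilde{\theta}$ and $\widetilde{H}_q\cdot\widetilde{\Gamma}_{\widetilde{s}}=1$.
\end{itemize}
Hence $\widetilde{M}_q\cdot\widetilde{\Gamma}_{\widetilde{s}}=0$ and $\widetilde{M}_q\cdot\widetilde{\Gamma}_{\ell}=1$. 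In particular $\widetilde{M}_q$ is nef and spans a boundary ray of $\Nef(\Gr(q,\widetilde{E}))$, and $\widetilde{H}_q$ spans the other one.

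Next I treat $\beta_q$. Since $\widetilde{H}_q^2=0$, we get $\beta_q\cdot\widetilde{H}_q=0$. Writing $d:=\deg\Gr(q,\C^k)$ in the Plücker embedding, we also get $\beta_q\cdot\widetilde{M}_q=c_1(\mathcal{O}(1))^{n_q-1}\cdot\widetilde{H}_q=d>0$. Expressing $\beta_q$ in the dual basis given by the two curves, this yields $\beta_q=d\,\widetilde{\Gamma}_{\ell}$.

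For $\alpha_q$, write $\alpha_q=a\,\widetilde{\Gamma}_{\ell}+b\,\widetilde{\Gamma}_{\widetilde{s}}$. Pairing with $\widetilde{H}_q$ gives $b=\widetilde{M}_q^{\,n_q-1}\cdot\widetilde{H}_q=d>0$. Pairing with $\widetilde{M}_q$ gives $a=\widetilde{M}_q^{\,n_q}$, which is $\ge 0$ because $\widetilde{M}_q$ is nef. So $\alpha_q$ already lies in the Mori cone, and the lemma reduces to proving
\[
a=\widetilde{M}_q^{\,n_q}=0,
\]
that is, that the nef boundary class $\widetilde{M}_q$ is not big.

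This vanishing is the main obstacle. Expanding and using $\widetilde{H}_q^2=0$ gives $\widetilde{M}_q^{\,n_q}=c_1(\mathcal{O}(1))^{n_q}-n_q\,\widetilde{\theta}\,d$. I would compute $c_1(\mathcal{O}(1))^{n_q}$ by pushing forward to $Y$ with the Grassmann-bundle (Segre/Schubert class) formula, which expresses it as an explicit multiple of $\deg\widetilde{E}$ times $d$.

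Since the identity need not hold for arbitrary $\widetilde{E}$, this is where the HN assumption and the choice $q=\rk(\widetilde{E}/\widetilde{E}^{\,l-1})$ must enter. The first thing I would try is a geometric route: show $\widetilde{M}_q$ is not big by exhibiting the projection/contraction that $\widetilde{M}_q$ induces, which collapses the sections coming from the destabilizing quotient. Alternatively, I would compare $\widetilde{\theta}$ with the pseudoeffective threshold $\zeta_{\widetilde{E},q}$ of \eqref{pseudt}: if $\widetilde{M}_q$ lies on the boundary of $\overline{\Eff}^1$, then $\widetilde{M}_q^{\,n_q}=0$ is immediate.

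Once $a=0$ is established, $\alpha_q=d\,\widetilde{\Gamma}_{\widetilde{s}}$ and $\beta_q=d\,\widetilde{\Gamma}_{\ell}$, which gives the claimed description of the Mori cone.
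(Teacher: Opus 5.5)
Your reduction is correct, and more careful than the paper's. In the dual basis $\widetilde{\Gamma}_{\ell},\widetilde{\Gamma}_{\widetilde{s}}$ you get $\beta_q=d\,\widetilde{\Gamma}_{\ell}$ and $\alpha_q=\widetilde{M}_q^{\,n_q}\,\widetilde{\Gamma}_{\ell}+d\,\widetilde{\Gamma}_{\widetilde{s}}$, so everything hinges on $\widetilde{M}_q^{\,n_q}=0$. That step cannot be supplied, because it is false under the standing hypotheses $m\ge 2$, $1<l\le m$.

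Finish your own computation. The pushforward $\widetilde{\pi}_{q*}\bigl(c_1(\mathcal{O}_{\Gr(q,\widetilde{E})}(1))^{n_q}\bigr)$ is a universal constant times $c_1(\widetilde{E})$. Testing on $\widetilde{E}=L^{\oplus k}$, where $\Gr(q,\widetilde{E})\cong Y\times\Gr(q,\C^k)$ and $\mathcal{O}(1)=L^{q}\boxtimes\mathcal{O}_{\Gr(q,\C^k)}(1)$, gives $c_1(\mathcal{O}(1))^{n_q}=n_q\,d\,q\,\mu(\widetilde{E})$. Hence
\[
\widetilde{M}_q^{\,n_q}\,=\,n_q\,d\,\bigl(q\,\mu(\widetilde{E})-\widetilde{\theta}\bigr)\,=\,n_q\,d\,q\,\bigl(\mu(\widetilde{E})-\mu(\widetilde{E}/\widetilde{E}^{\,l-1})\bigr)\,>\,0,
\]
since $\widetilde{E}^{\,l-1}\neq 0$ is destabilizing. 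Your second route gives the same verdict. The number $\widetilde{\theta}=\theta_{\widetilde{E},q}$ is the sum of the $q$ smallest HN slopes, while $\zeta_{\widetilde{E},q}$ is the sum of the $q$ largest. So $\widetilde{\theta}<\zeta_{\widetilde{E},q}$, and $\widetilde{M}_q$ lies in the interior of $\overline{\mathrm{Eff}}^{1}$; this is exactly the paper's corollary that the nef and pseudoeffective cones coincide only in the semistable case. Thus $\widetilde{M}_q$ is nef and big, the morphism it induces is birational, and your geometric route fails as well.

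The simplest instance is $\widetilde{E}=\mathcal{O}\oplus\mathcal{O}(-1)$ on $\mathbb{P}^1$ with $q=1$. Then $\Gr(1,\widetilde{E})=\mathbb{F}_1$, $\widetilde{\Gamma}_{\widetilde{s}}=C_0$ is the $(-1)$-curve, and $\widetilde{M}_q=C_0+f$ is the pullback of a line from $\mathbb{P}^2$, with $\widetilde{M}_q^{2}=1$. So $\alpha_q=C_0+f$ and $\beta_q=f$, and $\langle C_0+f,\,f\rangle$ does not contain $C_0$.

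Hence $\alpha_q$ lies in the interior of the Mori cone, and the lemma as stated cannot be proved. What your computation does establish is $\overline{\mathrm{Eff}}_1(\Gr(q,\widetilde{E}))=\bigl\langle \beta_q,\ \alpha_q-\tfrac{1}{d}\widetilde{M}_q^{\,n_q}\,\beta_q\bigr\rangle$. The paper's proof has the same hole: it deduces $\widetilde{M}_q\cdot\alpha_q=0$ merely from $\widetilde{M}_q$ being nef but not ample, and that does not imply $\widetilde{M}_q$ is not big.
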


\begin{proof}
Clearly, $\alpha_q$ and $\beta_q$ are pseudoeffective, since they are obtained as intersections of nef classes. We first show that $\alpha_q$ lies on the ray generated by
$\widetilde{\Gamma}_{\widetilde{s}}$. Since $\widetilde{M}_q$ generates the nef cone of $\Gr(q,\widetilde{E})$ (see, \cite[Proposition 4.1]{BP}), it is nef but not ample. Thus,
\[
\widetilde{M}_q\cdot \alpha_q \,=\, 0.
\]
On the other hand,
\[
\widetilde{H}_q\cdot \alpha_q
\,=\,
\widetilde{H}_q\cdot \widetilde{M}_q^{\,n_q-1}
\,=\,
\widetilde{H}_q\cdot
c_1\bigl(\mathcal{O}_{\Gr(q,\widetilde{E})}(1)\bigr)^{n_q-1}
\,>\,0,
\]
because $\mathcal{O}_{\Gr(q,\widetilde{E})}(1)$ is relatively ample over $Y$. Let 
\[
\alpha_q
\,=\,
a\,\widetilde{\Gamma}_{\widetilde{s}}
+
b\,\widetilde{\Gamma}_{\ell},
\qquad a,b\,\ge\, 0.
\]
Intersecting with $\widetilde{M}_q$, we obtain
\[
0
\,=\,
\widetilde{M}_q\cdot \alpha_q
\,=\,
a\bigl(\widetilde{M}_q\cdot \widetilde{\Gamma}_{\widetilde{s}}\bigr)
+
b\bigl(\widetilde{M}_q\cdot \widetilde{\Gamma}_{\ell}\bigr).
\]
Since
\[
\widetilde{M}_q\cdot \widetilde{\Gamma}_{\widetilde{s}}\,=\,0
\quad\text{and}\quad
\widetilde{M}_q\cdot \widetilde{\Gamma}_{\ell}\,>\,0,
\]
it follows that $b\,=\,0$. Moreover, since
\[
\widetilde{H}_q\cdot \alpha_q\,>\,0,
\]
we must have $a\,>\,0$. Thus, $\alpha_q$ lies on the ray generated by $\widetilde{\Gamma}_{\widetilde{s}}$. Since $\widetilde{H}_q^2\,=\,0$, we have
\[
\widetilde{H}_q\cdot \beta_q \,=\, 0.
\]
Write
\[
\beta_q
\,=\,
a'\,\widetilde{\Gamma}_{\widetilde{s}}
\,+\,
b'\,\widetilde{\Gamma}_{\ell},
\quad a',b'\,\ge\, 0.
\]
Intersecting with $\widetilde{H}_q$, we obtain
\[
0
\,=\,
\widetilde{H}_q\cdot \beta_q
\,=\,
a'\bigl(\widetilde{H}_q\cdot \widetilde{\Gamma}_{\widetilde{s}}\bigr)
\,+\,
b'\bigl(\widetilde{H}_q\cdot \widetilde{\Gamma}_{\ell}\bigr).
\]
Since
\[
\widetilde{H}_q\cdot \widetilde{\Gamma}_{\widetilde{s}}\,>\,0
\quad\text{and}\quad
\widetilde{H}_q\cdot \widetilde{\Gamma}_{\ell}\,=\,0,
\]
it follows that $a'\,=\,0$. Also,
\[
\widetilde{M}_q\cdot \beta_q\,>\,0,
\]
and hence $b'\,>\,0$. Therefore, $\beta_q$ lies on the ray generated by $\widetilde{\Gamma}_{\ell}$. Consequently,
$$\overline{\mathrm{Eff}}_1(\Gr(q,\widetilde{E}))
\,=\,
\Big\langle \alpha_q,\,\,\beta_q\Big\rangle.$$ This completes the proof.
\end{proof}

\begin{theorem}\label{thm:mori-parabolic}
The Mori cone of $\Gr(q,E_*)$ is given by
\[
\overline{\mathrm{Eff}}_1(\Gr(q,E_*))
\,=\,
\left\langle
M_q^{\,n_q-1},\;
M_q^{\,n_q-2}\cdot H_q
\right\rangle,
\]
where the classes $M_q$ and $H_q$ are defined in \eqref{morip}.
\end{theorem}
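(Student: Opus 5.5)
The plan is to transport Lemma~\ref{lem:orbifold-mori} along the finite quotient $\widetilde{p}_q\colon \Gr(q,\widetilde{E})\to\Gr(q,E_*)$, using proper pushforward on $1$-cycles and the special pullback. The first step is to compare the classes. By \eqref{pullback} and \eqref{capl}, together with $\widetilde{\theta}=|\Gamma|\theta$ and $c_1(\widetilde{L}_q)=|\Gamma|\,c_1(\widetilde{\mathcal{L}}_q)$, we have
\[
\widetilde{p}_q^{\,*}\Bigl(c_1(\mathcal{O}_{\Gr(q,E_*)}(1))-\theta\lambda\, c_1(\mathcal{L}_q)\Bigr)=\lambda\,\widetilde{M}_q,
\qquad
\widetilde{p}_q^{\,*}c_1(\mathcal{L}_q)=|\Gamma|\,\widetilde{H}_q .
\]
Since the Chern-class action commutes with proper pushforward (the projection formula $\widetilde{p}_{q*}(\widetilde{p}_q^{\,*}P\cap \gamma)=P\cap \widetilde{p}_{q*}\gamma$) and $\widetilde{p}_{q*}[\Gr(q,\widetilde{E})]=|\Gamma|\,[\Gr(q,E_*)]$, it follows that
\[
\widetilde{p}_{q*}(\alpha_q)=\frac{|\Gamma|}{\lambda^{n_q-1}}\,M_q^{\,n_q-1},
\qquad
\widetilde{p}_{q*}(\beta_q)=\frac{1}{\lambda^{n_q-2}}\,M_q^{\,n_q-2}\cdot H_q
\]
in $N_1(\Gr(q,E_*))$. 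Here $M_q^{\,n_q-1}$ means the Chern polynomial $\bigl(c_1(\mathcal{O}(1))-\theta\lambda c_1(\mathcal{L}_q)\bigr)^{n_q-1}$ capped with $[\Gr(q,E_*)]$, and similarly for the other class. So both generators in the statement are positive multiples of pushforwards of the generators from Lemma~\ref{lem:orbifold-mori}.

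Next I show that $\widetilde{p}_{q*}$ maps $\overline{\mathrm{Eff}}_1(\Gr(q,\widetilde{E}))$ onto $\overline{\mathrm{Eff}}_1(\Gr(q,E_*))$.
\begin{enumerate}
\item Pushforward of an effective curve is effective, and $\widetilde{p}_{q*}$ is linear and continuous on the finite-dimensional spaces, so the image of the pseudoeffective cone is contained in $\overline{\mathrm{Eff}}_1(\Gr(q,E_*))$.
\item For the reverse inclusion, take an irreducible curve $C\subset\Gr(q,E_*)$. By \eqref{special}, $\widetilde{p}_{q,\mathrm{sp}}^{\,*}[C]$ is an effective cycle, and by \eqref{comp} its pushforward is $|\Gamma|[C]$. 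Hence $[C]=\frac{1}{|\Gamma|}\widetilde{p}_{q*}(\text{effective})$.
\item The image of the closed polyhedral cone $\langle\alpha_q,\beta_q\rangle$ under a linear map is a closed cone. Therefore every effective class, and hence every pseudoeffective class, lies in $\widetilde{p}_{q*}\langle\alpha_q,\beta_q\rangle$.
\end{enumerate}
Combined with Lemma~\ref{lem:orbifold-mori} and the formulas above, this gives $\overline{\mathrm{Eff}}_1(\Gr(q,E_*))=\langle M_q^{\,n_q-1},\,M_q^{\,n_q-2}\cdot H_q\rangle$.

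It remains to check that the two rays are distinct and nonzero, so that the description is genuinely two-dimensional; $N_1(\Gr(q,E_*))$ is dual to $N^1$, which is two-dimensional by Proposition~\ref{prop:N1}. Pair the two classes with $c_1(\mathcal{L}_q)$ and with $c_1(\mathcal{O}(1))-\theta\lambda c_1(\mathcal{L}_q)$. Against $c_1(\mathcal{L}_q)$, the class $M_q^{\,n_q-2}\cdot H_q$ gives $0$ because $c_1(\mathcal{L}_q)^2=0$ (it is pulled back from a curve). The class $M_q^{\,n_q-1}$ gives $\lambda^{n_q-1}\deg\Gr(q,\C^k)>0$, computed on a general fibre using \eqref{eq:taut-restrict-fiber}. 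Against $c_1(\mathcal{O}(1))-\theta\lambda c_1(\mathcal{L}_q)$, the first class gives $0$: this follows from $\widetilde{M}_q\cdot\alpha_q=0$ by the projection formula, or directly, since $\theta=\theta_{E_*,q}$ for this $q$ and so $M_q$ is the nef boundary class of Theorem~\ref{thm:nef-cone}. The second class gives a positive value, again a fibre computation.

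The main obstacle I expect is justifying the projection-formula manipulations on the possibly singular $\Gr(q,E_*)$. Specifically, one must check that $\widetilde{p}_{q*}$ of a product of pulled-back Chern classes capped with the fundamental class equals the corresponding Chern polynomial capped with $|\Gamma|[\Gr(q,E_*)]$, and that numerical equivalence is respected (Remark~\ref{pdecent}). I would handle this using the operational Chow ring and its projection formula \cite[Ch.~17]{Fu}, together with the identity \eqref{comp} already used in the proof of Theorem~\ref{thm:pseudo}.
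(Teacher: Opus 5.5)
\textbf{The gap: Lemma~\ref{lem:orbifold-mori} is false, and so is the statement.}

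Your route is the paper's route: push $\alpha_q,\beta_q$ forward along $\widetilde p_q$, and use that $\widetilde p_{q*}$ carries $\overline{\Eff}_1(\Gr(q,\widetilde E))$ onto $\overline{\Eff}_1(\Gr(q,E_*))$. Those parts of your argument are sound. Your direct surjectivity proof via the special pullback, and your projection-formula computation of $\widetilde p_{q*}\alpha_q$ and $\widetilde p_{q*}\beta_q$, are cleaner than the paper's appeal to \cite{FL2}.

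The problem is the input. Lemma~\ref{lem:orbifold-mori} is false. So is your claim in the last paragraph that $c_1(\mathcal O_{\Gr(q,E_*)}(1))-\theta\lambda\,c_1(\mathcal L_q)$ has degree $0$ on $M_q^{\,n_q-1}$. Neither of your two justifications works. First, $\widetilde M_q\cdot\alpha_q=\widetilde M_q^{\,n_q}$ is a top self-intersection. Second, a nef class spanning a boundary ray of the nef cone only vanishes on \emph{some} nonzero class of $\overline{\Eff}_1$; it can still be big, and here it is.

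Write $\xi=c_1(\mathcal O_{\Gr(q,\widetilde E)}(1))$ and $\delta=\deg\Gr(q,\C^k)$. Then $\xi^{n_q-1}\cdot\widetilde H_q=\delta$ and $\widetilde H_q^2=0$. Also $\xi^{n_q}=n_q\,\delta\,\tfrac{q}{k}\deg\widetilde E$, since it is a universal multiple of $\deg\widetilde E$, normalized on $L^{\oplus k}$. Hence
\[
\widetilde M_q^{\,n_q}=n_q\,\delta\,q\,\bigl(\mu(\widetilde E)-\mu(\widetilde E/\widetilde E^{\,l-1})\bigr)>0 .
\]
This is strictly positive because $\widetilde E^{\,l-1}\neq 0$ and every Harder--Narasimhan slope of $\widetilde E^{\,l-1}$ exceeds every slope of $\widetilde E/\widetilde E^{\,l-1}$.

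The smallest instance is $Y=\mathbb P^1$, $\widetilde E=\mathcal O\oplus\mathcal O(-1)$, $q=1$, $l=2$. Here $\Gr(q,\widetilde E)=\mathbb F_1$, $\widetilde\theta=-1$, and $\widetilde M_q=\xi+f$ with $\widetilde M_q^2=1$. Then $\langle\alpha_q,\beta_q\rangle=\langle\xi+f,\,f\rangle$, which misses the class $\xi$ of the $(-1)$-curve. The paper's proof of the lemma takes exactly the same unjustified step (``nef but not ample, thus $\widetilde M_q\cdot\alpha_q=0$''), and its proof of the theorem inherits it.

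\textbf{Consequence for the theorem.} The statement fails for every parabolic unstable $E_*$, so no argument along these lines can be completed. Put $P=c_1(\mathcal O_{\Gr(q,E_*)}(1))-\theta\lambda\,c_1(\mathcal L_q)$. By your own projection-formula bookkeeping,
\[
\deg(P\cap M_q^{\,n_q-1})=\tfrac{\lambda^{n_q}}{|\Gamma|}\widetilde M_q^{\,n_q}>0,\qquad \deg(P\cap M_q^{\,n_q-2}\cdot H_q)=\lambda^{n_q-1}\delta>0 .
\]
So every nonzero class of $\langle M_q^{\,n_q-1},M_q^{\,n_q-2}\cdot H_q\rangle$ has positive $P$-degree. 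Now take $\gamma=\widetilde p_{q*}\widetilde\Gamma_{\widetilde s}$, which is effective. It satisfies
\[
\deg(P\cap\gamma)=\lambda\,\widetilde M_q\cdot\widetilde\Gamma_{\widetilde s}=\lambda\bigl(\deg(\widetilde E/\widetilde E^{\,l-1})-\widetilde\theta\bigr)=0,
\]
while $\deg(c_1(\mathcal L_q)\cap\gamma)=|\Gamma|\neq 0$. So $\gamma$ is a nonzero pseudoeffective class outside the proposed cone.

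What your surjectivity argument does prove, combined with the description $\langle\widetilde\Gamma_\ell,\widetilde\Gamma_{\widetilde s}\rangle$ from \cite{BHNN}, is
\[
\overline{\Eff}_1(\Gr(q,E_*))=\langle\gamma,\;M_q^{\,n_q-2}\cdot H_q\rangle .
\]
In terms of your classes, $\gamma$ spans the same ray as
\[
|\Gamma|\,\delta\,M_q^{\,n_q-1}-\lambda\,\widetilde M_q^{\,n_q}\,M_q^{\,n_q-2}\cdot H_q ,
\]
which is not a nonnegative combination of the two proposed generators.
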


\begin{proof}
It is known that the special pullback map
\[
\widetilde{p}_{q,\mathrm{sp}}^{\,*}
\,:\,
CH_*(\Gr(q,E_*))
\,\longrightarrow\,
CH_*(\Gr(q,\widetilde{E}))
\]
defined in \eqref{special} is a ring homomorphism, it follows from \cite[Proposition 3.5]{BBM1} that
\begin{equation}\label{push1}
\widetilde{p}_{q,\mathrm{sp}}^{\,*}\bigl(M_q^{\,n_q-1}\bigr)
\,=\,
\lambda^{n_q-1}\,\widetilde{M}_q^{\prime\,n_q-1},
\end{equation}
where
\[
\widetilde{M}_q^{\prime}
\,=\,
c_1\bigl(\mathcal{O}_{\Gr(q,\widetilde{E})}(1)\bigr)
-
\widetilde{\theta}\,
c_1(\widetilde{L}_q).
\]
Similarly,
\[
\widetilde{p}_{q,\mathrm{sp}}^{\,*}
\bigl(M_q^{\,n_q-2}\cdot H_q\bigr)
\,=\,
{\lambda^{n_q-2}}\,
\widetilde{M}_q^{\prime\,n_q-2}
\cdot
\widetilde{H}^{\prime}_{q},
\]
where $\widetilde{H}^{\prime}_{q}\,=\, c_{1}(\widetilde{L}_{q})$. After replacing $r$ by $q$ in \eqref{eq:pullback-L}, we obtain
\begin{equation}\label{push2}
\widetilde{p}_{q,\mathrm{sp}}^{\,*}
\bigl(M_q^{\,n_q-2}\cdot H_q\bigr)
\,=\,
\lambda^{n_q-2}|\Gamma|\,
\widetilde{M}_q^{\,n_q-2}\cdot \widetilde{H}_q.
\end{equation}
By Lemma~\ref{lem:orbifold-mori}, the curve classes $\alpha_{q}\,=\,\widetilde{M}_q^{\,n_q-1}$\,\text{and}\,
$\beta_{q}\,=\,\widetilde{M}_q^{\,n_q-2}\cdot \widetilde{H}_q$ generate the boundary of the Mori cone of $\Gr(q,\widetilde{E})$. Now using the identity $\widetilde{p}_{q,*}\circ \widetilde{p}_{q,\mathrm{sp}}^{\,*}
\,=\,
|\Gamma|\,\mathrm{Id}$ in \eqref{comp}, together with \eqref{push1} and \eqref{push2}, we obtain
\[
\widetilde{p}_{q,*}
\bigl(\widetilde{M}_q^{\,n_q-1}\bigr)
\,=\,
\frac{|\Gamma|}{\lambda^{n_{q}-1}}\,M_q^{\,n_q-1},
\]
and
\[
\widetilde{p}_{q,*}
\bigl(\widetilde{M}_q^{\,n_q-2}\cdot \widetilde{H}_q\bigr)
\,=\,
\frac{1}{\lambda^{n_q-2}}\,
\bigl(M_q^{\,n_q-2}\cdot H_q\bigr).
\]
Since we have $  \overline{\mathrm{Eff}}_1(\Gr(q,\widetilde{E}))\,\xlongrightarrow{\widetilde{p}_{q,\ast}}\,\overline{\mathrm{Eff}}_1(\Gr(q,E_*))$, thus $M_q^{\,n_q-1}$ and
$M_q^{\,n_q-2}\cdot H_q$ are effective classes on $\Gr(q,E_*)$. Moreover, by \cite[Corollary~3.2]{FL1}, these classes are not numerically trivial. It remains to show that they are linearly independent. \text{red}{change these lines} Since $c_1(\widetilde{L}_q)^2\,=\,0$. Consider the class $c_{1}(\mathcal{L}_{q})\in N^{1}(\Gr(q,\e))$. We obtain
\[
c_1(\mathcal{L}_q)
\,\cap\,
\bigl(M_q^{\,n_q-2}\cdot H_q\bigr)
\,=\,
M_q^{\,n_q-2}\cdot H_q^2.
\]
Thus,
\[
\deg\!\left(
c_1(\mathcal{L}_q)
\,\cap\,
\bigl(M_q^{\,n_q-2}\cdot H_q\bigr)
\right)
\,=\,
0.
\]
On the other hand,
\[
\deg\!\left(
c_1(\mathcal{L}_q)\,\cap\, M_q^{\,n_q-1}
\right)
=\,1. 
\]
Therefore, $M_q^{\,n_q-1}$ and $M_q^{\,n_q-2}\cdot H_q$ are linearly independent. Now by \cite[Corollary 3.22]{FL2}, it follows that $M_q^{\,n_q-1}$ and
$M_q^{\,n_q-2}\cdot H_q$ generates the Mori cone of $\Gr(q, \e)$.
\end{proof}

\section{Fiber product of parabolic Grassmann bundles}
Let $X$ be a smooth projective curve, and let $D\subset X$ be a reduced effective divisor. 
Let $E_{1*}$ and $E_{2*}$ be parabolic vector bundles of ranks $k_{1}$ and $k_{2}$, respectively, on $X$, endowed with parabolic structures along $D$. Assume that $E_{1*}$ and $E_{2*}$ satisfy \cite[Assumption~3.2]{Bi1}. Let $\w{E}_{1}$ and $\w{E}_{2}$
be the corresponding orbifold bundle on $Y$.
Fix integers
\[
1\,\le\, r_1\,\le\, k_1-1\quad \textrm{and}
\quad
1\,\le\, r_2\,\le\, k_2-1.
\]
Let
\[
\pi_{r_1}\,\colon\, \Gr(r_1,E_1{_*})\longrightarrow X,\quad \textrm{and}\quad
\pi_{r_2}\,\colon\, \Gr(r_2,E_2{_*})\,\longrightarrow\, X
\]
be the corresponding parabolic Grassmann bundles. On the orbifold side, let
\[
\widetilde{\pi}_{r_1}\colon \Gr(r_1,\widetilde E_1)\longrightarrow Y,
\qquad
\widetilde{\pi}_{r_2}\colon \Gr(r_2,\widetilde E_2)\longrightarrow Y
\]
be the $\Gamma$--Grassmann bundles.
Consider the diagonal action of $\Gamma$ on 
\begin{equation*}
\w{S}\,=\,\Gr(r_1,\widetilde E_1)\,\times_Y\, \Gr(r_2,\widetilde E_2).
\end{equation*}
Now take the $\Gamma$--quotient of $\w{S}$, we have:
\begin{equation*}
S\,:=\,\w{S}/\Gamma\,=\,  \Big(\Gr(r_1,\widetilde E_1)\,\times_Y\, \Gr(r_2,\widetilde E_2)\Big)/\Gamma\,=\,\Gr(r_1,\widetilde E_1)/\Gamma \,\times_{X}\, \Gr(r_2,\widetilde E_2)/\Gamma 
\end{equation*}
\begin{equation*}
\hspace{8cm}=\,\Gr(r_1, E_1{_*})\,\times_X\, \Gr(r_2,E_2{_*}).    
\end{equation*}
Let $\widetilde p\,\colon\, \widetilde S\,\longrightarrow\,S \,\,\,\textrm{denotes the quotient morphism}.$ We also denote the natural projection morphisms by
\[
\widetilde{\varepsilon}_1 \,\colon\, \widetilde{S}\,\longrightarrow\,\Gr(r_1,\widetilde{E}_1),
\qquad
\widetilde{\varepsilon}_2 \,\colon\, \widetilde{S}\,\longrightarrow\,\Gr(r_2,\widetilde{E}_2),
\]
and
\[
\varepsilon_1 \,\colon\, S\,\longrightarrow\, \Gr(r_1, E_{1_*}),
\qquad
\varepsilon_2 \,\colon\, S\,\longrightarrow\, \Gr(r_2, E_{2_*}).
\]

Consider the following commutative diagram
\begin{equation}
\hspace{-1.5cm}
\begin{tikzcd}
Y \arrow[d, "p"]    
&  \wee \arrow[l, "\widetilde{\pi}_{r_2}"] \arrow[d, "\widetilde{p}_{r_2}"]    
& \widetilde{S} \,:=\, \wS
\arrow[l, "\widetilde{\varepsilon}_{2}"]
\arrow[d, "\widetilde{p}"]
\arrow[r,"\widetilde{\varepsilon}_{1}"]   
& \we
\arrow[r, "\widetilde{\pi}_{r_{1}}"]
\arrow[d, "\widetilde{p}_{r_1}"]    
& Y\arrow[d, "p"]\\
X                        
&  \paee  \arrow[l, "\pi_{r_2}"]              
& S \,:=\,\pS
\arrow[r, "\varepsilon_{1}"]
\arrow[l, "\varepsilon_{2}"]           
& \pae
\arrow[r, "\pi_{r_1}"]           
& X.
\end{tikzcd}
\end{equation}

\subsection{Tautological line bundles}
Let $N(E_{1\ast})$ and $N(E_{2\ast})$ denote the least common multiples of the orders of the isotropy subgroups arising from the actions of $\textrm{GL}(k_{1},\mathbb C)$ and $\textrm{GL}(k_{2},\mathbb C)$ on the ramified principal bundles
$E_{1,\textrm{GL}(k_{1},\mathbb C)}$ and
$E_{2, \textrm{GL}(k_{2},\mathbb C)}$
corresponding to the parabolic bundles
$E_{1\ast}$ and $E_{2\ast}$, respectively. For simplicity of notation, we set
\[
\lambda_1\,:=\,N(E_{1_*}),
\qquad
\lambda_2\,:=\,N(E_{2_*}).
\]
By construction of the tautological line bundles, we have
\begin{equation}\label{eq:taut-pullback-1}
\widetilde{p}^{\,*}\Bigl(\varepsilon_1^*\mathcal{O}_{\Gr(r_1,E_{1_*})}(1)\Bigr)
\,=\,
\widetilde{\varepsilon}_1^{\,*}
\mathcal{O}_{\Gr(r_1,\widetilde{E}_1)}(\lambda_1),
\end{equation}
and
\begin{equation}\label{eq:taut-pullback-2}
\widetilde{p}^{\,*}\Bigl(\varepsilon_2^*\mathcal{O}_{\Gr(r_2,E_{2_*})}(1)\Bigr)
\,=\,
\widetilde{\varepsilon}_2^{\,*}
\mathcal{O}_{\Gr(r_2,\widetilde{E}_2)}(\lambda_2).
\end{equation}

\subsection{Divisors induced from the base curve}

Let \(x\in X\setminus D\) be an unramified point. Define line bundles on \(S\) by
\[
\mathcal L
\,:=\,
(\pi_{r_1}\circ \varepsilon_1)^*\mathcal O_X(x)\,=\,
(\pi_{r_2}\circ \varepsilon_2)^*\mathcal O_X(x).
\]

Similarly, for a point \(x'\,\in\, D\), define
\[
\mathcal L'
\,:=\,
(\pi_{r_1}\circ \varepsilon_1)^*\mathcal O_X(x')
\,=\,
(\pi_{r_2}\circ \varepsilon_2)^*\mathcal O_X(x').
\]

We now describe their pullbacks to \(\widetilde S\).
Let $x\,\in\, X\setminus D$
\[
p^{-1}(x)\,=\,\{y_1,\ldots,y_{|\Gamma|}\},
\qquad y_i\,\in\, Y.
\]
Then
\[
\widetilde L
\,:=\,
\widetilde p^{\,*}\mathcal L
\,=\,
(\widetilde\pi_{r_1}\circ \widetilde\varepsilon_1)^*
p^*\mathcal O_X(x)
\,=\,
\bigotimes_{i=1}^{|\Gamma|}
(\widetilde\pi_{r_1}\circ \widetilde\varepsilon_1)^*
\mathcal O_Y(y_i)\,=\,(\widetilde\pi_{r_2}\circ \widetilde\varepsilon_2)^*
p^*\mathcal O_X(x).
\]

Now let \(x'\,\in\, D\), and write
\begin{equation*}
p^{-1}(x')
\,=\,
\{y'_1,\ldots,y'_t\},
\qquad y'_i\,\in\, Y.
\end{equation*}
Assume that \(y'_i\) occurs with multiplicity \(e_i\) for each
\(i=1,\ldots,t\), so that
\[
e_1\,+\,\cdots\,+\,e_t\,=\,|\Gamma|.
\]

Then
\[
\widetilde L'
\,:=\,
\widetilde p^{\,*}\mathcal L'
\,=\,
\widetilde\varepsilon_1^{\,*}
\Bigl(
\widetilde\pi_{r_1}^{\,*}\mathcal O_Y(y'_1)^{e_1}
\otimes\cdots\otimes
\widetilde\pi_{r_1}^{\,*}\mathcal O_Y(y'_t)^{e_t}
\Bigr).
\]
Since \(X\) and \(Y\) are smooth projective curves, any two points on \(X\) are numerically equivalent, and the same holds for \(Y\). Consequently,
\[
c_1\!\bigl(\mathcal O_X(x)\bigr)
\,=\, 
c_1\!\bigl(\mathcal O_X(x')\bigr)
\qquad\text{in}\,\,N^1(X).
\]
Thus,
\begin{equation*}
c_{1}(\mathcal{L})\,=\, c_{1}(\mathcal{L}^{'}),\qquad c_{1}(\widetilde{L})\,=\,|\Gamma| c_{1}(\widetilde{\mathcal{L}}), \quad\textrm{and} \quad c_{1}(\widetilde{L}^{\prime})\,=\,|\Gamma| c_{1}(\widetilde{\mathcal{L}})
\end{equation*}
in $N^{1}(S)$ and $N^{1}( \widetilde{S}),$ respectively.

Therefore, for the purpose of studying divisor and cycle cones on \(S\), it
is sufficient to work with the numerical class associated with an
unramified point \(x\in X\setminus D\).

\begin{remark}
Although the divisor classes arising from ramified and unramified points
are numerically equivalent, the geometry of the corresponding fibers is
different. If \(x\in X\setminus D\) is unramified, then the fiber of the
natural projection
\[
S\, \xlongrightarrow{\pi_{r}\,\circ\,\varepsilon_{1}}\, X
\]
is isomorphic to
\[
\Gr(r_1,\C^{k_1}) \,\times\, \Gr(r_2,\C^{k_2}). 
\]
On the other hand, if \(x\in D\), the isotropy subgroup associated with
the covering \(p\,:\,Y\,\longrightarrow\, X\) acts nontrivially on this product, and the
fiber is obtained as the corresponding quotient. Thus the distinction
between ramified and unramified points remains geometrically significant,
even though it disappears at the level of numerical equivalence classes.
\end{remark}
\subsection{The Nef and Pseudoeffective Cones of the Fiber Product of Parabolic Grassmann Bundles} Let $E_{1*}$ and $E_{2*}$ be parabolic vector bundles over $X$. For $j\,=\,1,2$, let
\begin{equation}\label{hnpj}
E_j
\,=\,
E_j^{m_j}
\supsetneq
E_j^{m_j-1}
\supsetneq
\cdots
\supsetneq
E_j^0
\,=\,
0
\end{equation}
be the Harder--Narasimhan filtration of $E_{j*}$. Thus, for each
$i\,=\,1,\ldots,m_j$, the parabolic quotient
\[
Q^i_{j*}
\,:=\,
\left(E_j^i/E_j^{i-1}\right)_*
\]
is parabolic semistable, and the corresponding parabolic slopes satisfy
\[
\mu_{j1}\,>\,\mu_{j2}\,>\,\cdots\,>\,\mu_{jm_j}.
\]

By the correspondence between parabolic bundles and orbifold bundles, the Harder--Narasimhan filtrations correspond to each other. Hence the corresponding orbifold bundles $\widetilde E_1$ and $\widetilde E_2$ admit the Harder--Narasimhan filtrations corresponding to \eqref{hnpj}. 

For $j\,=\,1,2$, let
\begin{equation}\label{hnoj}
\widetilde E_j
\,=\,
\widetilde E_j^{m_j}
\supsetneq
\widetilde E_j^{m_j-1}
\supsetneq
\cdots
\supsetneq
\widetilde E_j^0
\,=\,
0
\end{equation}
be the Harder--Narasimhan filtration of $\widetilde E_j$. Thus, for each
$i\,=\,1,\ldots,m_j$, the quotient
\[
\widetilde Q_j^i
\,:=\,
\widetilde E_j^i/\widetilde E_j^{i-1}
\]
is semistable, and the corresponding slopes satisfy
\[
\widetilde\mu_{j1}
\,>\,
\widetilde\mu_{j2}
\,>\,
\cdots
\,>\,
\widetilde\mu_{jm_j}.
\]


Let the Harder--Narasimhan filtration of $E_{1*}$ and $E_{2*}$ be as in
\eqref{hnpj} 
let $m_1$ and $m_2$ denote its length. Fix integers
\[
r_1\,\in\, [1,\,k_{1}-1]\quad ,\quad r_2\,\in\, [1,\,k_{2}-1].
\]
Let $t_1$ and $t_{2}$ be the largest integer such that
\[
\sum_{i=t_1}^{m_1}
\rk(E_1^i/E_1^{i-1})
\,\ge\, r_1 \quad , \quad \sum_{i=t_2}^{m_2}
\rk(E_2^i/E_2^{i-1})
\,\ge\, r_2.
\]
Define
\[
\theta_{E_{1*},r_1}
\,:=\,
\bigl(r_1-\rk(E_1/E_1^{t_1})_*\bigr)
\cdot
\mathrm{par}\text{-}\mu\!\left((E_1^{t_1}/E_1^{t_1-1})_*\right)
+
\mathrm{par}\text{-}\deg(E_1/E_1^{t_1})_*
\]
and 
\begin{equation*}
\theta_{E_{2*},r_2}
\,:=\,
\bigl(r_2-\rk(E_2/E_2^{t_2})_*\bigr)
\cdot
\mathrm{par}\text{-}\mu\!\left((E_2^{t_2}/E_2^{t_{2}-1})_*\right)
+
\mathrm{par}\text{-}\deg(E_2/E_2^{t_2})_*.
\end{equation*}
Analogously, for orbifold bundles $\widetilde{E}_{1}$ and $ \widetilde{E}_{2}$ we can define $ \theta_{\widetilde{E}_{1}, r_{1}}$, and $ \theta_{\widetilde{E}_{2}, r_{2}}$. Also, we note that 
\begin{equation}\label{th}
\theta_{\widetilde{E}_{1}, r_{1}}\,=\, |\Gamma|\theta_{{E_{1}}_{\ast},r_{1}}   \quad  \textrm{and} \quad \theta_{\widetilde{E}_{2}, r_{2}}\, =\, |\Gamma| \theta_{{E_{2}}_{\ast},r_{2}}.
\end{equation}

\begin{theorem}\label{nef cone prod}
The nef cone of $S \,=\, \Gr(r_1,E_{1*})\, \times_X\, \Gr(r_2,E_{2*})$ is given by
\[
\emph{Nef}(S)
\,=\,
\Big\langle
c_1(\varepsilon_{1}^{\ast}\Oponee)-\lambda_1\theta_{E_{1*},r_1}\,c_1(\mathcal{L}),\,\,\,
c_1(\varepsilon_{2}^{\ast}\Optwoo)-\lambda_2\theta_{E_{2*},r_2}\,c_1(\mathcal{L}),\,\,\,
c_1(\mathcal{L})
\Big\rangle.
\]
Moreover, each of the classes $c_1(\varepsilon_{1}^{\ast}\Oponee)-\lambda_1\theta_{E_{1*},r_1}\,c_1(\mathcal{L}),\,\,$
$c_1(\varepsilon_{2}^{\ast}\Optwoo)-\lambda_2\theta_{E_{2*},r_2}\,c_1(\mathcal{L}),\,\,$ and $c_1(\mathcal{L})$ lie on the boundary of $\emph{Nef}(S)$.
\end{theorem}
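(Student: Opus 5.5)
The plan is to reduce to the orbifold fiber product $\widetilde S=\Gr(r_1,\widetilde E_1)\times_Y\Gr(r_2,\widetilde E_2)$ via the finite surjective quotient $\widetilde p\colon\widetilde S\to S$, just as in Theorem~\ref{thm:nef-cone}. On the orbifold side I will use the known description of the nef cone of a fiber product of two Grassmann bundles over a curve from \cite{MR}:
\[
\Nef(\widetilde S)=\Big\langle \widetilde\varepsilon_1^{\,*}\big(c_1(\mathcal O_{\Gr(r_1,\widetilde E_1)}(1))-\theta_{\widetilde E_1,r_1}c_1(\widetilde{\mathcal L}_{r_1})\big),\ \widetilde\varepsilon_2^{\,*}\big(c_1(\mathcal O_{\Gr(r_2,\widetilde E_2)}(1))-\theta_{\widetilde E_2,r_2}c_1(\widetilde{\mathcal L}_{r_2})\big),\ c_1(\widetilde{\mathcal L})\Big\rangle .
\]
Here $\widetilde{\mathcal L}$ is the pullback of a point of $Y$, and each generator spans an extremal ray.

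First I determine $N^1(S)$. Since $\widetilde p$ is finite surjective, $\widetilde p^{\,*}$ is injective on $N^1$ by the projection formula, as in Proposition~\ref{prop:N1}. Since $N^1(\widetilde S)$ is $3$-dimensional, this gives $\dim N^1(S)\le 3$. By \eqref{eq:taut-pullback-1}, \eqref{eq:taut-pullback-2} and $c_1(\widetilde L)=|\Gamma|c_1(\widetilde{\mathcal L})$, the classes $c_1(\varepsilon_1^*\Oponee)$, $c_1(\varepsilon_2^*\Optwoo)$ and $c_1(\mathcal L)$ pull back to $\lambda_1\widetilde\varepsilon_1^{\,*}c_1(\mathcal O(1))$, $\lambda_2\widetilde\varepsilon_2^{\,*}c_1(\mathcal O(1))$ and $|\Gamma|c_1(\widetilde{\mathcal L})$. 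These are linearly independent, so the three classes form a basis of $N^1(S)$.

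Next I compute the pullbacks of the three proposed generators. Using \eqref{th},
\[
\widetilde p^{\,*}\big(c_1(\varepsilon_1^*\Oponee)-\lambda_1\theta_{E_{1*},r_1}c_1(\mathcal L)\big)=\lambda_1\big(\widetilde\varepsilon_1^{\,*}c_1(\mathcal O(1))-\theta_{\widetilde E_1,r_1}c_1(\widetilde{\mathcal L})\big),
\]
and similarly for the index $2$. Also $\widetilde p^{\,*}c_1(\mathcal L)=|\Gamma|c_1(\widetilde{\mathcal L})$. So $\widetilde p^{\,*}$ sends the three proposed generators to positive multiples of the three generators of $\Nef(\widetilde S)$. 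The two cones then match by the following step.

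For a finite surjective morphism, a class $D$ on $S$ is nef if and only if $\widetilde p^{\,*}D$ is nef. Nefness pulls back along proper maps. For the converse, every curve $C\subset S$ is dominated by a curve $\widetilde C\subset\widetilde S$, and $\widetilde p^{\,*}D\cdot\widetilde C=\deg(\widetilde C/C)\,D\cdot C$. Hence $\Nef(S)=(\widetilde p^{\,*})^{-1}\Nef(\widetilde S)$. Because $\widetilde p^{\,*}$ is a linear isomorphism $N^1(S)\to N^1(\widetilde S)$ (both are $3$-dimensional and it is injective), it carries the cone spanned by the three classes onto $\Nef(\widetilde S)$. This proves the equality of cones. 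The boundary statement follows the same way: each image class is nef but not ample on $\widetilde S$. By Nakai's criterion under finite maps (ampleness descends and ascends along finite surjective morphisms), the original classes are nef but not ample, so they lie on $\partial\Nef(S)$.

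The main obstacle is the orbifold input, namely the exact form of the \cite{MR} description of $\Nef(\widetilde S)$. If it is not available in exactly this form, I would prove it directly by duality. Each of the three classes is nef, being a pullback of a nef class under $\widetilde\varepsilon_j$ or from $Y$. For extremality I would exhibit curves that are zero against two of the generators and positive against the third:
\begin{itemize}
\item a line in a fiber of $\Gr(r_1,\widetilde E_1)$ crossed with a point in $\Gr(r_2,\widetilde E_2)$, and the symmetric line for the index $2$;
\item the section $Y\to\widetilde S$ induced by the two quotients computing $\theta_{\widetilde E_j,r_j}$, built from the HN filtrations as in \cite{BP}.
\end{itemize}
These three curves span a simplicial cone dual to the proposed one, which forces equality.
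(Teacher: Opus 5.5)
Your proposal is correct and follows the same route as the paper: invoke \cite{MR} for $\Nef(\widetilde S)$, then check via \eqref{eq:taut-pullback-1}, \eqref{eq:taut-pullback-2} and \eqref{th} that $\widetilde p^{\,*}$ sends the three proposed classes to positive multiples of its generators. Your extra step, showing that $\widetilde p^{\,*}\colon N^1(S)\to N^1(\widetilde S)$ is an isomorphism and that $\Nef(S)=(\widetilde p^{\,*})^{-1}\Nef(\widetilde S)$, is what turns the paper's ``nef but not ample'' conclusion into the stated equality of cones; the fallback via section curves is not needed, and as written it would require a limiting argument whenever no rank-$r_j$ quotient of degree exactly $\theta_{\widetilde E_j,r_j}$ exists.
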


\begin{proof}
By \cite[Proposition~5.5]{MR}, the nef cone of $\widetilde{S}$ is given by
\begin{equation*}
\textrm{Nef}(\widetilde{S})\,=\,\Big \langle c_1\bigl(\widetilde{\varepsilon}_{1}^{\,\ast}\mathcal{O}_{\Gr(r_1,\widetilde{E}_1)}(1)\bigr)
-
\theta_{\widetilde{E}_1,r_1}\,\,\,
c_1(\widetilde{\mathcal{L}}),\,\,c_1\bigl(\widetilde{\varepsilon}_{2}^{\,\ast}\mathcal{O}_{\Gr(r_2,\widetilde{E}_2)}(1)\bigr)
-
\theta_{\widetilde{E}_2,r_2}\,
c_1(\widetilde{\mathcal{L}}),\,\,\,  c_1(\widetilde{\mathcal{L}})\Big\rangle.
\end{equation*}Consider the pullback map
\[
\widetilde{p}^{\,*}
\,:\,
N^1(S)
\,\longrightarrow\,
N^1(\widetilde{S}).
\]

Using \cite[Proposition 3.5]{BBM1}, together with
\eqref{eq:taut-pullback-1}  \eqref{eq:taut-pullback-2}, and \eqref{th} we obtain 
$$\widetilde{p}^{\,\ast}(c_{1}(\mathcal{L}))\,=\,c_{1}(L)\,=\,|\Gamma|c_{1}(\widetilde{\mathcal{L}}),$$
$$\widetilde{p}^{\,*}\Bigl(
c_1({\varepsilon}_{1}^{\,\ast}\Oponee)
-
\lambda_1\theta_{E_{1*},r_1}\,
c_1(\mathcal{L})
\Bigr)
\,=\, \lambda_1\Bigl(
c_1\bigl(\widetilde{\varepsilon}_{1}^{\,\ast}\mathcal{O}_{\Gr(r_1,\widetilde{E}_1)}(1)\bigr)
-
\theta_{\widetilde{E}_1,r_1}\,
c_1(\widetilde{\mathcal{L}})
\Bigr),$$
and
$$\widetilde{p}^{\,*}\Bigl(
c_1(\varepsilon_{2}^{\ast}\Optwoo)
-
\lambda_2\theta_{E_{2*},r_2}\,
c_1(\mathcal{L})
\Bigr)\,=\,\lambda_2\Bigl(
c_1\bigl(\widetilde{\varepsilon}_{2}^{\,\ast}\mathcal{O}_{\Gr(r_2,\widetilde{E}_2)}(1)\bigr)
-
\theta_{\widetilde{E}_2,r_2}\,
c_1(\widetilde{\mathcal{L}})
\Bigr).$$
Since the above classes lie on the boundary of
\(\textrm{Nef}(\widetilde{S})\), they are nef but not ample.
As \(\widetilde{p}\) is a proper surjective morphism, nefness descends along \(\widetilde{p}\). Hence the classes $c_1(\varepsilon_{1}^{\ast}\Oponee)-\lambda_1\theta_{E_{1*},r_1}\,c_1(\mathcal{L}),$ $c_1(\varepsilon_{2}^{\ast}\Optwoo)-\lambda_2\theta_{E_{2*},r_2}\,c_1(\mathcal{L}),$ and $c_1(\mathcal{L})$ are nef but not ample on \(S\). Therefore, they lie on the boundary of \(\textrm{Nef}(S)\), proving the theorem.
\end{proof}
Next, we compute the pseudoeffective cone of $S$. We first set up the necessary notation. For $j\,=\,1,2$, let
\[
v_j\,\in\, [0,m_j-1]
\]
be the unique smallest integer such that
\[
\sum_{i=1}^{v_j+1}
\rk(E_j^i/E_j^{i-1})_*
\,>\,
r_j.
\]
Define
\begin{equation}\label{ztj}
\zeta_{E_{j*},r_j}
\,:=\,
\bigl(r_j-\rk(E_j^{v_j})\bigr)\cdot
\mathrm{par}\text{-}\mu\!\left((E_j^{v_j+1}/E_j^{v_j})_*\right)
+
\mathrm{par}\text{-}\deg(E_{j*}^{\,v_j}),
\qquad j\,=\,1,2.
\end{equation}
Analogously, for $j\,=\,1,2$, we define
$\zeta_{\widetilde E_j,r_j}$ for the orbifold bundle
$\widetilde E_j$. By the correspondence between parabolic and orbifold bundles, we have
\[
\zeta_{\widetilde E_j,r_j}
\,=\,
|\Gamma|\,
\zeta_{E_{j*},r_j},
\qquad
j\,=\,1,2.
\]
Let
\[
n_{r_1r_2}
\,:=\,
\dim(\widetilde S).
\]
Since the quotient morphism
\[
\widetilde p\,:\,\widetilde S\,\longrightarrow\, S
\]
is finite, we have 
\[
\dim(\widetilde{S})\,=\,\dim( S)\,=\,n_{r_1r_2}.
\]

\begin{theorem}
The boundary of $\overline{\emph{Eff}}_{n_{r_{1}r_{2}}-1}(S)$ is generated by the classes
$$\left(c_{1}(\varepsilon_{1}^{\ast}\mathcal{O}_{\Gr(r_{1}, {E_{1}}_{\ast})}(1))\,-\, \lambda_{1}\zeta_{{E_{1}}_{\ast}, r_{1}}\,c_{1}(\mathcal{L})\right)\,\cap\,[S], \quad\left(c_{1}(\varepsilon_{2}^{\ast}\mathcal{O}_{\Gr(r_{2}, {E_{2}}_{\ast})}(1))\,-\, \zeta_{{E_{2}}_{\ast}, r_{2}}\,c_{1}(\mathcal{L})\right)\,\cap\,[S]$$ and $c_{1}(\mathcal{L})\,\cap\,[S]$. Here  $\zeta_{{E_{1}}_{\ast}, r_{1}}$ and $ \zeta_{{E_{2}}_{\ast}, r_{2}}$ are defined in \eqref{ztj}. Furthermore, the pushforward map $\widetilde p_*\,:\,
\overline{\Eff}_{n_{r_{1}r_{2}-1}}(\widetilde{S})\,
\longrightarrow\,
\overline{\Eff}_{n_{r_{1}r_{2}-1}}(S)$ is an isomorphism.
\end{theorem}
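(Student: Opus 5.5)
I will follow the pattern of the proof of Theorem~\ref{thm:pseudo}: first identify the cone on the orbifold side, then transport its generators to $S$ with the special pullback $\widetilde p_{\mathrm{sp}}^{\,*}$ and the identity $\widetilde p_*\circ\widetilde p_{\mathrm{sp}}^{\,*}=|\Gamma|\,\mathrm{Id}$ from \eqref{comp}.

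The orbifold input is the pseudoeffective cone of a fiber product of two Grassmann bundles over a curve, computed in \cite{MR}:
\[
\overline{\mathrm{Eff}}_{n_{r_1r_2}-1}(\widetilde S)=\Big\langle c_1(\widetilde\varepsilon_1^{\,*}\mathcal O_{\Gr(r_1,\widetilde E_1)}(1))-\zeta_{\widetilde E_1,r_1}c_1(\widetilde{\mathcal L}),\ c_1(\widetilde\varepsilon_2^{\,*}\mathcal O_{\Gr(r_2,\widetilde E_2)}(1))-\zeta_{\widetilde E_2,r_2}c_1(\widetilde{\mathcal L}),\ c_1(\widetilde{\mathcal L})\Big\rangle .
\]
All three generators lie on the boundary.

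\textbf{Step 1: special pullbacks of the candidate classes.} Using \cite[Proposition 3.5]{BBM1} together with \eqref{eq:taut-pullback-1} and \eqref{eq:taut-pullback-2}, I get
\[
\widetilde p_{\mathrm{sp}}^{\,*}\big(c_1(\varepsilon_j^*\mathcal O(1))\cap[S]\big)=\lambda_j\,c_1(\widetilde\varepsilon_j^{\,*}\mathcal O(1)),\qquad \widetilde p_{\mathrm{sp}}^{\,*}\big(c_1(\mathcal L)\cap[S]\big)=|\Gamma|\,c_1(\widetilde{\mathcal L}).
\]
Applying $\widetilde p_*$ then gives
\[
\widetilde p_*\big(c_1(\widetilde\varepsilon_j^{\,*}\mathcal O(1))\big)=\tfrac{|\Gamma|}{\lambda_j}\,c_1(\varepsilon_j^*\mathcal O(1))\cap[S],\qquad \widetilde p_*\big(c_1(\widetilde{\mathcal L})\big)=c_1(\mathcal L)\cap[S].
\]
Since $\zeta_{\widetilde E_j,r_j}=|\Gamma|\zeta_{E_{j*},r_j}$, the pushforward of the $j$-th generator is
\[
\widetilde p_*\Big(c_1(\widetilde\varepsilon_j^{\,*}\mathcal O(1))-\zeta_{\widetilde E_j,r_j}c_1(\widetilde{\mathcal L})\Big)=\tfrac{|\Gamma|}{\lambda_j}\Big(c_1(\varepsilon_j^*\mathcal O(1))-\lambda_j\zeta_{E_{j*},r_j}c_1(\mathcal L)\Big)\cap[S].
\]
So each generator maps to a positive multiple of the stated class. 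This computation also shows that the coefficient $\lambda_j$ must appear for both $j=1$ and $j=2$; the second class in the statement is presumably missing a factor $\lambda_2$, and I would prove the corrected version. Because pushforward sends effective cycles to effective cycles and is continuous, the images are pseudoeffective.

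\textbf{Step 2: linear independence.} By \cite[Corollary 3.8]{FL1} the three classes are not numerically trivial, but I also need them to be linearly independent in $N_{n_{r_1r_2}-1}(S)$. I would test them against weight $n_{r_1r_2}-1$ Chern polynomials built from $A_j=c_1(\varepsilon_j^*\mathcal O(1))$ and $c_1(\mathcal L)$. Write $d_j=\dim\Gr(r_j,\C^{k_j})$.
\begin{itemize}[label=$\circ$]
\item The polynomial $A_1^{d_1}A_2^{d_2-1}c_1(\mathcal L)$ vanishes on $c_1(\mathcal L)\cap[S]$ because $c_1(\mathcal L)^2=0$. On the $j=2$ class it gives a nonzero multiple of $\deg\Gr(r_1,\C^{k_1})\deg\Gr(r_2,\C^{k_2})\lambda_1^{d_1}\lambda_2^{d_2}$. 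On the $j=1$ class it gives $0$, since a fiber carries no room for an extra power of $A_1$.
\item The symmetric polynomial $A_1^{d_1-1}A_2^{d_2}c_1(\mathcal L)$ detects the $j=1$ class in the same way.
\item A polynomial such as $A_1^{d_1}A_2^{d_2}$ pairs nontrivially with $c_1(\mathcal L)\cap[S]$.
\end{itemize}
The resulting $3\times 3$ pairing matrix is triangular with nonzero diagonal entries, so the three classes are independent. To evaluate these degrees I would pull back by $\widetilde p_{\mathrm{sp}}^{\,*}$ and compute on the smooth variety $\widetilde S$, where intersections on a fiber $\Gr\times\Gr$ are standard.

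\textbf{Step 3: conclusion.} Since $N_{n_{r_1r_2}-1}(S)$ has dimension at most $3$ (by the injectivity argument of Proposition~\ref{prop:N1}), $\widetilde p_*$ maps the orbifold cone surjectively onto a full-dimensional cone. Every effective divisor on $S$ is $\tfrac1{|\Gamma|}\widetilde p_*$ of its special pullback, which is effective, so $\overline{\mathrm{Eff}}_{n_{r_1r_2}-1}(S)$ equals this image. A linear isomorphism maps boundary rays to boundary rays, so the three classes generate the boundary. Because $\widetilde p_*$ is a linear isomorphism of the ambient spaces carrying generators to generators, its restriction to the cones is an isomorphism.

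The main obstacle is Step 2, namely getting the degree computations right on a possibly singular $S$. I would handle it entirely through $\widetilde p_{\mathrm{sp}}^{\,*}$ and the projection formula $\pi_*(\pi_{\mathrm{sp}}^*a\cdot c)=a\cdot\pi_*c$.
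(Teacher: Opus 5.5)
Your proposal is correct and follows the same route as the paper. You push the three generators of $\overline{\mathrm{Eff}}_{n_{r_1r_2}-1}(\widetilde S)$ forward using $\widetilde p_*\circ\widetilde p^{\,*}_{\mathrm{sp}}=|\Gamma|\,\mathrm{Id}$, check linear independence with test Chern polynomials, and conclude generation; your explicit $3\times 3$ pairing and the effectivity of special pullbacks fill in what the paper abbreviates as ``the same argument as in Theorem~\ref{thm:pseudo}''. You are also right that the second class needs the factor $\lambda_2$, which is exactly what the paper's own pushforward computation produces.
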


\begin{proof}
By \cite[Theorem 4.8]{MR}, the pseudoeffective cone of $\widetilde{S}$ has the following description
\[
\overline{\textrm{Eff}}_{n_{r_{1}r_{2}}-1}(\widetilde{S})
\,=\,
\Bigg\langle
\left(
c_{1}(\widetilde{\varepsilon}_{1}^{\,\ast}\mathcal{O}_{\Gr(r_{1}, \widetilde{E}_{1})}(1))
-\zeta_{\widetilde{E}_{1},r_{1}}\, c_{1}(\widetilde{\mathcal{L}})
\right),
\left(
c_{1}(\widetilde{\varepsilon}_{2}^{\,\ast}\mathcal{O}_{\Gr(r_{2}, \widetilde{E}_{2})}(1))
-\zeta_{\widetilde{E}_{2},r_{2}}\, c_{1}(\widetilde{\mathcal{L}})
\right),
c_{1}(\widetilde{\mathcal L})
\Bigg\rangle.
\]
Moreover, in $ \textrm{N}_{n_{r_{1}r_{2}}-1}(\widetilde{S})$ we have
$c_1\bigl(\widetilde{L}\bigr)
\,=\,
|\Gamma|\, c_1\bigl( \widetilde{\mathcal{L}}\bigr)$. From the equation 
\begin{equation*}
\zeta_{\widetilde E_j,r_j}
\,=\,
|\Gamma|\,
\zeta_{E_{j*},r_j},
\,\,\,
j\,=\,1,2
\end{equation*}
we see that in $\textrm{N}_{n_{r_{1}r_{2}}-1}(\widetilde{S})$ we have
$$\left(
c_{1}(\widetilde{\varepsilon}_{1}^{\,\ast}\mathcal{O}_{\Gr(r_{1}, \widetilde{E}_{1})}(1))
-\zeta_{\widetilde{E}_{1},r_{1}}\, c_{1}(\widetilde{\mathcal{L}})
\right)\,=\,\left(c_{1}(\widetilde{\varepsilon}_{1}^{\,\ast}{ \mathcal{O}_{\Gr(r_{1},\widetilde{E}_{1})}(1)             })- {\zeta_{E_{1}}}_{\ast,r_{1}}\, c_{1}(\widetilde{{L}})\right),$$ 
and
$$
\left(
c_{1}(\widetilde{\varepsilon}_{2}^{\,\ast}\mathcal{O}_{\Gr(r_{2}, \widetilde{E}_{2})}(1))
-\zeta_{\widetilde{E}_{2},r_{2}}\, c_{1}(\widetilde{\mathcal{L}})
\right)\,=\,\left(c_{1}(\widetilde{\varepsilon}_{2}^{\,\ast}{\mathcal{O}_{\Gr(r_{2},\widetilde{E}_{2})}(1)})- {\zeta_{E_{2}}}_{\ast,r_{2}}\, c_{1}(\widetilde{{L}})\right).$$ Using the identity $\widetilde{p}_{\ast}\circ \widetilde{p}^{\,\ast}_{\textrm{sp}}\,=\,|\Gamma|\textrm{Id}$ and \cite[Proposition 3.5]{BBM1}, it follows that under the map $\widetilde{p}_{\ast}\,:\,N_{n_{r_{1}r_{2}}-1}(\widetilde{S})\,\longrightarrow\,N_{n_{r_{1}r_{2}}-1}(S)$, we have 
\begin{equation*}
\begin{aligned}
{\p}_{\ast}\left(c_{1}(\widetilde{\varepsilon}_{1}^{\,\ast}{ \mathcal{O}_{\Gr(r_{1},\widetilde{E}_{1})}(1)             })- {\zeta_{E_{1}}}_{\ast,r_{1}}\, c_{1}(\widetilde{{L}})\right)
&\,=\, \frac{|\Gamma|}{\lambda_{1}}\Big(\left(c_{1}({\varepsilon}_{1}^{\,\ast}\mathcal{O}_{\Gr(r_{1}, {E_{1}}_{\ast})}(1))\,-\, \lambda_{1}\zeta_{{E_{1}}_{\ast}, r_{1}}\, c_{1}(\mathcal{L})\right)\,\cap\,[S]\Big), \\
{\p}_{\ast}\left(c_{1}(\widetilde{\varepsilon}_{2}^{\,\ast}{\mathcal{O}_{\Gr(r_{2},\widetilde{E}_{2})}(1)})- {\zeta_{E_{2}}}_{\ast,r_{2}}\, c_{1}(\widetilde{{L}})\right)
&\,=\, \frac{|\Gamma|}{\lambda_{2}}\Big((c_{1}(\varepsilon_{2}^{\ast}\mathcal{O}_{\Gr(r_{2},{E_{2}}_{\ast})}(1)) \,-\,\lambda_{2} {\zeta_{E_{2}}}_{\ast,r_{2}}\, c_{1}(\mathcal{L}))\,\cap\, [S]\Big), \\
{\p}_{\ast} \left(c_{1}(\widetilde{L})\right)
&\,=\, c_{1}(\mathcal{L}).
\end{aligned}
\end{equation*}
  These classes are pseudoeffective, as pushforword of pseudoeffective cycle is pseudoeffective . Now, using the same argument as in the proof of Theorem~\ref{thm:pseudo}, we show that these pseudoeffective cycles on $S$ are linearly independent and generate the pseudoeffective cone.
\end{proof}

\begin{corollary}
The dual pseudoeffective cone of $S$ is 
\[
\overline{\emph{Eff}}^{1}(S)
\,=\,
\Big\langle\Big(
c_{1}(\varepsilon_{1}^{\ast}\mathcal{O}_{\Gr(r_{1},{E_{1}}_{\ast})}(1))
- \zeta_{{E_{1}}_{\ast}, r_{1}}
\, c_{1}(\mathcal{L})
\Big),
\Big(
c_{1}(\varepsilon_{2}^{\ast}\mathcal{O}_{\Gr(r_{2},{E_{2}}_{\ast})}(1))
- \zeta_{{E_{2}}_{\ast}, r_{2}}
\, c_{1}(\mathcal{L}),\, c_{1}(\mathcal{L})
\Big)
\Big\rangle .
\]
\end{corollary}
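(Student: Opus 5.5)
The plan is to mirror the proof of the first corollary following Theorem~\ref{thm:pseudo}. By definition, a class $\beta\in N^{1}(S)$ is pseudoeffective exactly when $\phi(\beta)=\beta\cap[S]$ lies in $\overline{\Eff}_{n_{r_1r_2}-1}(S)$, where $\phi$ is the cyclefication map \eqref{cyclefication}. For $k=1$ this map $\phi\colon N^{1}(S)\to N_{n_{r_1r_2}-1}(S)$ is injective (\cite[Example 19.3.3]{Fu}). Hence $\overline{\Eff}^{1}(S)$ is identified with $\phi^{-1}\bigl(\overline{\Eff}_{n_{r_1r_2}-1}(S)\bigr)$, so it suffices to know which classes in $N^1(S)$ have image in the cone described by the preceding theorem.

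First I would check that $N^{1}(S)$ is three-dimensional, spanned by $c_1(\varepsilon_1^*\Oponee)$, $c_1(\varepsilon_2^*\Optwoo)$ and $c_1(\mathcal{L})$. I would argue exactly as in Proposition~\ref{prop:N1}. The pullback $\widetilde p^{\,*}\colon N^1(S)\to N^1(\widetilde S)$ is injective by the projection formula, since $\widetilde p_*\widetilde p^{\,*}=|\Gamma|$. Moreover $N^1(\widetilde S)$ has dimension three, and by \eqref{eq:taut-pullback-1}, \eqref{eq:taut-pullback-2} the three classes pull back to nonzero multiples of its standard basis. This step also shows that the nef classes in Theorem~\ref{nef cone prod} really span.

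Next I would note that the three generating cycles of the preceding theorem are each of the form $\phi(\beta)$ for explicit $\beta\in N^1(S)$. They are linearly independent in $N_{n_{r_1r_2}-1}(S)$ by the intersection argument of Theorem~\ref{thm:pseudo}, and they span the whole cone. So every pseudoeffective cycle of codimension one lies in $\phi(N^1(S))$. Taking $\phi^{-1}$ of the simplicial cone then gives a simplicial cone in $N^1(S)$ generated by the three preimages, which is the asserted description.

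The main obstacle is bookkeeping of the factors $\lambda_j$. The previous theorem's generators carry $\lambda_1\zeta_{E_{1*},r_1}$, while the displayed corollary writes $\zeta_{E_{j*},r_j}$. I would recompute the pullbacks via $\widetilde p^{\,*}$. By \eqref{eq:taut-pullback-1} and $\zeta_{\widetilde E_j,r_j}=|\Gamma|\zeta_{E_{j*},r_j}$, the class $c_1(\varepsilon_j^*\mathcal{O}(1))-\lambda_j\zeta_{E_{j*},r_j}c_1(\mathcal{L})$ pulls back to $\lambda_j$ times the orbifold generator of \cite[Theorem 4.8]{MR}. The consistent normalization is therefore the one with $\lambda_j$, and I would state the generators with $\lambda_j\zeta_{E_{j*},r_j}$, matching Theorem~\ref{nef cone prod}, or equivalently read the corollary with that normalization. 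The remaining verification, that $\phi$ carries the preimages onto the generators, is then immediate.
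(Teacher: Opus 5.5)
Your proposal follows the paper's proof, which is the single line that the corollary follows from the definition of $\overline{\Eff}^{1}(S)$ (through $\phi$) and the preceding theorem; your checks that $\phi$ is injective on $N^{1}(S)$ and that $\dim N^{1}(S)=3$ only make that line explicit. Your normalization remark is also right: the pushforward formulas in the preceding theorem's proof produce $\lambda_j\zeta_{E_{j*},r_j}$ for both $j=1,2$, while the theorem's statement keeps $\lambda_1$ but drops $\lambda_2$ and the corollary drops both. So what you and the paper actually prove is the cone generated by $c_1(\varepsilon_j^{*}\mathcal{O}_{\Gr(r_j,E_{j*})}(1))-\lambda_j\zeta_{E_{j*},r_j}\,c_1(\mathcal{L})$ for $j=1,2$ together with $c_1(\mathcal{L})$, and this agrees with the displayed cone only when $\lambda_j\zeta_{E_{j*},r_j}=\zeta_{E_{j*},r_j}$ for both $j$.
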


\begin{proof}
This follows using the definition of $\overline{\mathrm{Eff}}^{1}(S)$  and the above theorem. 
\end{proof}

\subsection{Mori cone of the fiber product of Parabolic Grassmann bundles}
In
this subsection, we describe the Mori cone of the fiber product of parabolic grassmann bundle over smooth projective curve $X$. Let ${E_{1}}_{\ast}$ and ${E_{2}}_{\ast}$ be  parabolic bundles on $X$ which are parabolic unstable (i.e., not parabolic semistable).

The Harder--Narasimhan filtrations of $E_{1*}$ and $E_{2*}$ are given in
\eqref{hnpj}, with lengths $m_1$ and $m_2$, respectively. Since $E_{1*}$ and $E_{2*}$ are parabolic unstable, we have
\[
m_1,m_2\,\ge\, 2.
\]

For $j\,=\,1,2$, fix an integer
\[
1<l_j\,\le\, m_j,
\]
and define
\[
q_j
\,:=\,
\rk(E_j/E_j^{\,l_j-1}),
\qquad
\theta_j
\,:=\,
\operatorname{par-}\deg(E_j/E_j^{\,l_j-1})_*.
\]
Let $\widetilde{E}_{1}$ and $\widetilde{E}_{2}$ be the orbifold bundles corresponding to
$E_{1*}$ and $E_{2*}$, respectively. Define
\[
\widetilde\theta_j
\,:=\,
\deg(\widetilde E_j/\widetilde E_j^{\,l_j-1}),
\qquad
j\,=\,1,2.
\]
As in the beginning of this section, consider the fiber product of the $\Gamma$--Grassmann bundles
\[
\widetilde S
\,:=\,
\Gr(q_1,\widetilde E_1)
\,\times_Y\,
\Gr(q_2,\widetilde E_2).
\]
By the correspondence between parabolic and orbifold bundles, the diagonal action of $\Gamma$ on $\widetilde S$ induces a quotient morphism
\[
\widetilde p\,:\,\widetilde S\,\longrightarrow\, S,
\]
where
\[
S
\,:=\,
\Gr(q_1,E_{1*})
\,\times_X\,
\Gr(q_2,E_{2*})
\]
is the fiber product of the corresponding parabolic Grassmann bundles over \(X\). Denote its dimension by $n_{q_{1}q_{2}}$.

We follow the notation for line bundles introduced in Subsections~4.1 and~4.2, replacing the indices $r_{1}$ and $r_{2}$ by $q_{1}$ and $q_{2}$, respectively.

Now we compute the Mori cone of $\widetilde{S}$. Let $\mathcal{O}_{\Gr(q_{1},\widetilde{E}_{1})}(1)$ and $\mathcal{O}_{\Gr(q_{2},\widetilde{E}_{2})}(1)$ be the tautological line bundles over $\Gr(q_{1},\widetilde{E}_{1})$ and
$\Gr(q_{2},\widetilde{E}_{2}),$ respectively. Define the numerical classes
\begin{equation}\label{m}
\widetilde{M}_{1}\,=\,c_{1}\bigl( \widetilde{\varepsilon}_{1}^{\, \ast}\mathcal{O}_{\Gr(q_{1},\widetilde{E}_{1})}(1)\bigr)
-
\widetilde{\theta}_{1}\,
c_{1}\bigl(\widetilde{\mathcal{L}}\bigr),
\end{equation}

\begin{equation*}
\widetilde{M}_{2}\,=\,c_{1}\bigl(\widetilde{\varepsilon}_{2}^{\, \ast}\mathcal{O}_{\Gr(q_{2},\widetilde{E}_{2})}(1)\bigr)
-
\widetilde{\theta}_{2}\,
c_{1}\bigl(\widetilde{\mathcal{L}}\bigr),
\end{equation*}
and
\begin{equation}\label{h}
\widetilde{H}\,=\,c_{1}(\widetilde{\mathcal{L}})
\end{equation}
in $ N^{1}(\widetilde{S})$.
Suppose that the ranks of vector bundle $\widetilde{E}_{1}$ and $ \widetilde{E}_{2}$ are $k_{1}$ and $ k_{2}$, respectively. Then 
\begin{equation*}
\textrm{dim}\,\, \widetilde{S}\,=\, (q_{1}(k_{1}\,-\,q_{1})\,+\,q_{2}(k_{2}\,-\,q_{2}))\,+\,1\,=\,n_{q_{1}q_{2}}
\end{equation*}
Set
\[
a\,=\, q_{1}(k_{1}\,-\,q_{1})
\quad \textrm{and} \quad
b\,=\,q_{2}(k_{2}\,-\,q_{2}).
\]

Then $a$ and $b$ are the dimensions of fibers of the maps $ \widetilde{\pi}_{q_{1}}$ and $ \widetilde{\pi}_{q_{2}}$, respectively.

\begin{proposition}\label{morif}

The Mori cone of $\widetilde{S}$ is generated by the classes 

$$\overline{\emph{Eff}}_{1}(\widetilde{S})\,=\,\Big\langle \widetilde{M}_{1}^{\,a-1} \widetilde{M}_{2}^{\,b} \widetilde{H}, \,\, \widetilde{M}_{1}^{\,a} \widetilde{M}_{2}^{\,b-1} \widetilde{H}, \,\, \widetilde{M}_{1}^{a} \widetilde{M}_{2}^{b}\Big\rangle.$$
    
\end{proposition}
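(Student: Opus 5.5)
We will prove this by duality, mirroring Lemma~\ref{lem:orbifold-mori}. First, $N^1(\widetilde S)$ is three-dimensional with basis $c_1(\widetilde\varepsilon_1^{\,*}\mathcal O(1))$, $c_1(\widetilde\varepsilon_2^{\,*}\mathcal O(1))$, $\widetilde H$, so $N_1(\widetilde S)$ is also three-dimensional. By \cite[Proposition~5.5]{MR}, $\mathrm{Nef}(\widetilde S)$ is the simplicial cone spanned by the three boundary classes $\widetilde M_1,\widetilde M_2,\widetilde H$; here we use that for $q_j=\rk(\widetilde E_j/\widetilde E_j^{\,l_j-1})$ the integer $t_j$ equals $l_j$ and $\theta_{\widetilde E_j,q_j}=\widetilde\theta_j$. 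The Mori cone is then the dual simplicial cone. Its three extremal rays are the curve classes $\gamma_1,\gamma_2,\gamma_3$ characterised by
\[
\widetilde M_2\cdot\gamma_1=\widetilde H\cdot\gamma_1=0,\quad \widetilde M_1\cdot\gamma_2=\widetilde H\cdot\gamma_2=0,\quad \widetilde M_1\cdot\gamma_3=\widetilde M_2\cdot\gamma_3=0,
\]
each pairing positively with the remaining generator. So it suffices to show that the three proposed products are nonzero multiples of $\gamma_1,\gamma_2,\gamma_3$ with positive coefficient. Each product is an intersection of nef classes, hence pseudoeffective.

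The vanishings come from the geometry of the two projections. Note that $\widetilde M_j$ is pulled back from $\Gr(q_j,\widetilde E_j)$ via $\widetilde\varepsilon_j$, and $\widetilde H$ is pulled back from $Y$. Consequently:
\begin{itemize}
\item $\widetilde H^2=0$, since $\dim Y=1$.
\item $\widetilde M_1^{a+1}$ is the pullback of a class of degree $a+1=\dim\Gr(q_1,\widetilde E_1)$. Since $\widetilde M_1$ is nef but not big there (it sits on the boundary of the pseudoeffective cone as well, being $\widetilde M_q$ from Lemma~\ref{lem:orbifold-mori} with self-intersection $\widetilde M_q\cdot\alpha_q=0$), we get $\widetilde M_1^{a+1}=0$. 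Similarly $\widetilde M_2^{b+1}=0$.
\item $\widetilde M_1^a\widetilde H=0$, because $\widetilde M_1^a\widetilde H$ restricted to a fibre $\Gr(q_1,\widetilde E_1)_y$ has degree $a$ in the $a$-dimensional fibre, and pulling back from $\Gr(q_1,\widetilde E_1)$ the class $\widetilde M_1^a\widetilde H$ is a top-degree (degree $a+1$) class there. More precisely, $\widetilde M_1^{a}\widetilde H$ is the pullback of $\widetilde M_{q_1}^{a}\widetilde H_{q_1}$, a multiple of a point class on $\Gr(q_1,\widetilde E_1)$.
\end{itemize}
The last point needs care: the pullback of a point class is not zero. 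Instead, I will use that $\widetilde M_1^{a+1}$, $\widetilde M_1^{a}\widetilde H$, etc.\ are zero in degree $a+2$.

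With these vanishings the three intersections are checked as follows.
\begin{itemize}
\item For $\gamma_3=\widetilde M_1^a\widetilde M_2^b$: $\widetilde M_1\cdot\gamma_3=\widetilde M_1^{a+1}\widetilde M_2^b=0$, and likewise $\widetilde M_2\cdot\gamma_3=0$. Also $\widetilde H\cdot\gamma_3=\widetilde H\cdot c_1(\mathcal O_1)^a c_1(\mathcal O_2)^b=\deg\Gr(q_1,\C^{k_1})\cdot\deg\Gr(q_2,\C^{k_2})>0$, since on a fibre over $y$ the class $\widetilde H$ kills all correction terms.
\item For $\gamma_1=\widetilde M_1^{a-1}\widetilde M_2^b\widetilde H$: $\widetilde H\cdot\gamma_1=0$ because $\widetilde H^2=0$, and $\widetilde M_2\cdot\gamma_1=0$ because it contains $\widetilde M_2^{b+1}$ multiplied by other pullback factors. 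By the projection formula this is the pullback of $\widetilde M_{q_2}^{b+1}=0$ from $\Gr(q_2,\widetilde E_2)$. Finally $\widetilde M_1\cdot\gamma_1$ equals the fibre degree computed above, which is positive.
\item $\gamma_2$ is handled symmetrically.
\end{itemize}

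The three classes are therefore linearly independent and span the three extremal rays of the dual of $\mathrm{Nef}(\widetilde S)$. Since $\widetilde S$ is smooth projective, $\overline{\mathrm{Eff}}_1(\widetilde S)=\mathrm{Nef}(\widetilde S)^\vee$, which gives the claim.

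The main obstacle is justifying $\widetilde M_j^{\dim+1}=0$, i.e.\ that $\widetilde M_{q}^{\,n_q}=0$ on $\Gr(q,\widetilde E)$. I would get this from Lemma~\ref{lem:orbifold-mori}, where $\widetilde M_q\cdot\alpha_q=\widetilde M_q^{n_q}=0$ was shown. Given that, the products containing $\widetilde M_j^{\dim+1}$ vanish by pulling back along $\widetilde\varepsilon_j$. A second point to double-check is the identification of $\mathrm{Nef}(\widetilde S)$ with the cone generated by exactly $\widetilde M_1,\widetilde M_2,\widetilde H$ for these specific $q_j$, via \cite[Proposition~5.5]{MR}.
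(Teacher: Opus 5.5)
Your duality framework is sound. Since $\theta_{\widetilde E_j,q_j}=\widetilde\theta_j$, the nef cone is $\mathrm{Nef}(\widetilde S)=\langle\widetilde M_1,\widetilde M_2,\widetilde H\rangle$, and your handling of $\gamma_1,\gamma_2$ works. One correction there: $\widetilde M_2\cdot\gamma_1=0$ holds because $\widetilde M_2^{\,b+1}\widetilde H$ is $\widetilde\varepsilon_2^{\,*}$ of a degree $b+2$ class on the $(b+1)$-dimensional $\Gr(q_2,\widetilde E_2)$, not because $\widetilde M_{q_2}^{\,b+1}=0$.

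The step you flagged as the main obstacle is false, and the $\gamma_3$ claim fails with it. Over a curve, $\int c_1(\mathcal O_{\Gr(q,\widetilde E)}(1))^{n_q}=n_q\,q\,\mu(\widetilde E)\deg\Gr(q,\C^k)$: the number is linear in $\deg\widetilde E$, and the constant is fixed by twisting with a line bundle. Hence
\[
\widetilde M_q^{\,n_q}=n_q\deg\Gr(q,\C^k)\,\bigl(q\,\mu(\widetilde E)-q\,\mu(\widetilde E/\widetilde E^{\,l-1})\bigr)>0
\]
whenever $\widetilde E$ is unstable and $l>1$. So $\widetilde M_q$ is nef and big. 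It lies on the boundary of the nef cone but in the interior of $\overline{\mathrm{Eff}}^1$; compare the corollary that these cones agree only in the semistable case. The step in Lemma~\ref{lem:orbifold-mori} deducing $\widetilde M_q\cdot\alpha_q=0$ ``because $\widetilde M_q$ is nef but not ample'' confuses non-ample with non-big, so you cannot rely on it. Already for $\mathbb P(\mathcal O\oplus\mathcal O(-e))$, with $C_0$ the negative section and $f$ a fibre, one gets $\alpha_q=C_0+ef$, which is not on the ray of $C_0$. Consequently $\widetilde M_1\cdot\widetilde M_1^{a}\widetilde M_2^{b}=\bigl(\int\widetilde M_{q_1}^{a+1}\bigr)\deg\Gr(q_2,\C^{k_2})>0$. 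Here $\widetilde M_1^{a+1}$ is $\widetilde\varepsilon_1^{\,*}$ of a positive multiple of a point, and $\widetilde M_2$ restricts to $\mathcal O(1)$ on the fibres $\Gr(q_2,\C^{k_2})$ of $\widetilde\varepsilon_1$. So $\widetilde M_1^a\widetilde M_2^b$ is not on $\gamma_3$.

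This gap cannot be closed, because the statement itself fails. Take $\widetilde E_1=\widetilde E_2=\mathcal O_Y\oplus L$ with $\deg L=-e<0$ (e.g.\ $\Gamma$ trivial) and $q_1=q_2=1$, so $a=b=1$. With $\xi_j$ the tautological classes, $\xi_j^2=-e\,\xi_j\widetilde H$, $\widetilde H^2=0$, $\xi_1\xi_2\widetilde H=1$, and $\widetilde M_j=\xi_j+e\widetilde H$. Pairing against $(\widetilde M_1,\widetilde M_2,\widetilde H)$ gives $\widetilde M_2\widetilde H\mapsto(1,0,0)$, $\widetilde M_1\widetilde H\mapsto(0,1,0)$ and $\widetilde M_1\widetilde M_2\mapsto(e,e,1)$. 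The section $\sigma$ defined by the quotients $\widetilde E_j\to L$ gives $(0,0,1)$. Hence $[\sigma]=\widetilde M_1\widetilde M_2-e\,\widetilde M_1\widetilde H-e\,\widetilde M_2\widetilde H$ is effective but lies outside the proposed cone.

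In general the third extremal ray is the class of the section $Y\to\widetilde S$ given by the two quotients $\widetilde E_j\to\widetilde E_j/\widetilde E_j^{\,l_j-1}$, not $\widetilde M_1^a\widetilde M_2^b$. That section pairs to $0$ with $\widetilde M_1,\widetilde M_2$ and to $1$ with $\widetilde H$. The paper's own proof has the same defect. Its displayed vanishings $\widetilde M_1^{a+1}\widetilde M_2^b\widetilde H=0$ are in degree $a+b+2>\dim\widetilde S$ and so are vacuous, and it never checks $\widetilde M_1\cdot\widetilde M_1^a\widetilde M_2^b$.
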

\begin{proof}
We first show that the classes $\widetilde{M}_{1}, \widetilde{M}_{2}$ and    $\widetilde{H}$ are nef. Since $c_{1}(\mathcal{O}_{Y}(y))$, for some $y\,\in\,Y$ is nef, we have
\[
\widetilde{H}
\,=\,
c_{1}\!\left(
(\widetilde{\pi}_{q_{1}} \circ \widetilde{\varepsilon}_{1})^{*}
\bigl(\mathcal{O}_{Y}(y)\bigr)
\right) \,=\, \big[
(\widetilde{\pi}_{q_{1}} \circ \widetilde{\varepsilon}_{1})^{-1}(y)
\big].
\]
Hence $\widetilde{H}$ is nef. By \cite[Lemma $2.2$]{BHNN},
\[
c_{1}\!(
\mathcal{O}_{\Gr(q_{1},\widetilde{E}_{1})}(1)
)
-
\widetilde{\theta}_{1}\,
\widetilde{\pi}_{q_{1}}^{*}
c_{1}\!(\mathcal{O}_{Y}(y))
\]
and
\[
c_{1}\!
(\mathcal{O}_{\Gr(q_{2},\widetilde{E}_{2})}(1)
)
-
\widetilde{\theta}_{2}\,
\widetilde{\pi}_{q_{2}}^{*}
c_{1}\!\left(\mathcal{O}_{Y}(y)\right)
\]
are nef classes on $\Gr(q_{1},\widetilde{E}_{1})$ and
$\Gr(q_{2},\widetilde{E}_{2})$, respectively. Therefore, their pullbacks to $\widetilde{S}$ are also nef. Hence $\widetilde{M}_{1},$
$\widetilde{M}_{2}$, and $\widetilde{H}$, are nef classes on $\widetilde{S}$. Using Lemma 2.2 of \cite{BHNN}, together with \cite[Theorem, 5.5]{MR}  we conclude that $\widetilde{M}_{1},$ $\widetilde{M}_{2},$ and $\widetilde{H}$ generate the nef cone of $\widetilde{S}$.  Observe that
\[
\widetilde{M}_{1}
\left(
\widetilde{M}_{1}^{\,a-1}
\widetilde{M}_{2}^{\,b}
\widetilde{H}
\right)
\,=\,
\widetilde{M}_{1}^{\,a}
\widetilde{M}_{2}^{\,b}
\widetilde{H}.
\]
For any $y\,\in\,Y$, we have
\[
(\widetilde{\pi}_{q_{1}}\circ \widetilde{\varepsilon}_{1})^{-1}(y)
\,=\,
\Gr(q_{1},\mathbb{C}^{k_{1}})
\times_{\mathbb{C}}
\Gr(q_{2},\mathbb{C}^{k_{2}}).
\]
Moreover,
\[
\widetilde{M}_{1}\Big|
_{\Gr(q_{1},\mathbb{C}^{k_{1}})
\times_{\mathbb{C}}
\Gr(q_{2},\mathbb{C}^{k_{2}})}
\,=\,
c_{1}\!\left(
\widetilde{\varepsilon}_{1}^{\,*}
\mathcal{O}_{\Gr(q_{1},\mathbb{C}^{k_{1}})}(1)
\right),
\]
and
\[
\widetilde{M}_{2}\Big|
_{\Gr(q_{1},\mathbb{C}^{k_{1}})
\times_{\mathbb{C}}
\Gr(q_{2},\mathbb{C}^{k_{2}})}
\,=\,
c_{1}\!\left(
\widetilde{\varepsilon}_{2}^{\,*}
\mathcal{O}_{\Gr(q_{2},\mathbb{C}^{k_{2}})}(1)
\right).
\]
Using Lemma \ref{relamp}, we have 
\[
\widetilde{M}_{1}^{\,a}
\widetilde{M}_{2}^{\,b}
\widetilde{H}
\,=\,
\Big(c_{1}\!\left(
\widetilde{\varepsilon}_{1}^{\,*}
\mathcal{O}_{\Gr(q_{1},\mathbb{C}^{k_{1}})}(1)
\right)^{a}\Big)
\cdot
 \Big( c_{1}\!\left(
\widetilde{\varepsilon}_{2}^{\,*}
\mathcal{O}_{\Gr(q_{2},\mathbb{C}^{k_{2}})}(1)
\right)^{b} \Big)
\,>\,0.
\]
Clearly,
\[
\widetilde{M}_{1}^{\,a+1}
\widetilde{M}_{2}^{\,b}
\widetilde{H}
\,=\,
0
\,=\,
\widetilde{M}_{1}^{\,a}
\widetilde{M}_{2}^{\,b+1}
\widetilde{H}.
\]
Using the above computations, we  conclude that the curve classes
\[
\widetilde{M}_{1}^{\,a-1}\widetilde{M}_{2}^{\,b}\widetilde{H},
\qquad
\widetilde{M}_{1}^{\,a}\widetilde{M}_{2}^{\,b-1}\widetilde{H},
\quad\textrm{and}\quad
\widetilde{M}_{1}^{\,a}\widetilde{M}_{2}^{\,b}
\]
generate  the cone $\overline{\textrm{Eff}}_{1}(\widetilde{S}).$
\end{proof}
Next, we discuss the closed cone of curves of $S$. To simplify notation, define
\[
M_{1}
\,=\,
\Big(c_{1}\!\left(
\varepsilon_{1}^{*}
\bigl(\mathcal{O}_{\Gr(q_{1},E_{1}{_{\ast}})}(1)\bigr)
\right)
-
\theta_{1}\lambda_{1}\,
c_{1}(\mathcal{L})\Big)\,\cap\, [S],
\]
\[
M_{2}
\,=\,
\Big(c_{1}\!\left(
\varepsilon_{2}^{*}
\bigl(\mathcal{O}_{\Gr(q_{2},E_{2}{_{\ast}})}(1)\bigr)
\right)
-
\theta_{2}\lambda_{2}\,
c_{1}(\mathcal{L})\Big)\,\cap\, [S],
\]
and
\[
H
\,=\,
c_{1}(\mathcal{L})\,\cap\, [S].
\]
\begin{theorem}\label{morifp}
The Mori cone of $S$ is generated by
\begin{equation*}
\overline{\emph{Eff}}_{1}({S})\,=\,\Big\langle M_{1}^{\,a-1}M_{2}^{\,b}H,\,\,
M_{1}^{\,a}M_{2}^{\,b-1}H,\,\,
M_{1}^{\,a}M_{2}^{\,b}\Big\rangle.
\end{equation*}
\end{theorem}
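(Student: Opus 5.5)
We follow the template of the proof of Theorem~\ref{thm:mori-parabolic}, replacing Lemma~\ref{lem:orbifold-mori} by Proposition~\ref{morif}. We transport the three generators of $\overline{\mathrm{Eff}}_1(\widetilde S)$ to $S$ via the quotient map $\widetilde p$, and then check that their images are nontrivial and linearly independent in the three-dimensional space $N_1(S)$. That $N_1(S)$ is three-dimensional follows from Theorem~\ref{nef cone prod}, since $N^1(S)$ is three-dimensional and $N^{n-1}(S)\to N_1(S)$ is onto.

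\textbf{Step 1 (special pullback).} Use that $\widetilde p^{\,*}_{\mathrm{sp}}$ is a ring homomorphism together with \cite[Proposition 3.5]{BBM1} and \eqref{eq:taut-pullback-1}, \eqref{eq:taut-pullback-2}. This gives
\[
\widetilde p^{\,*}_{\mathrm{sp}}(M_j)=\lambda_j\bigl(c_1(\widetilde\varepsilon_j^{\,*}\mathcal O(1))-\widetilde\theta_j\,c_1(\widetilde{\mathcal L})\bigr)=\lambda_j\widetilde M_j,\qquad \widetilde p^{\,*}_{\mathrm{sp}}(H)=c_1(\widetilde L)=|\Gamma|\widetilde H .
\]
For the first identity we use $\theta_j\lambda_j c_1(\widetilde L)=\lambda_j\widetilde\theta_j c_1(\widetilde{\mathcal L})$, which holds because $\widetilde\theta_j=|\Gamma|\theta_j$ and $c_1(\widetilde L)=|\Gamma|c_1(\widetilde{\mathcal L})$. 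Hence
\[
\widetilde p^{\,*}_{\mathrm{sp}}(M_1^{a-1}M_2^bH)=\lambda_1^{a-1}\lambda_2^b|\Gamma|\,\widetilde M_1^{a-1}\widetilde M_2^b\widetilde H,
\]
and analogously for the other two monomials. For instance, $\widetilde p^{\,*}_{\mathrm{sp}}(M_1^aM_2^b)=\lambda_1^a\lambda_2^b\,\widetilde M_1^a\widetilde M_2^b$.

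\textbf{Step 2 (pushforward).} Apply $\widetilde p_*\circ\widetilde p^{\,*}_{\mathrm{sp}}=|\Gamma|\,\mathrm{Id}$ from \eqref{comp}, together with Remark~\ref{pdecent}. This shows that $\widetilde p_*$ sends each generator of $\overline{\mathrm{Eff}}_1(\widetilde S)$ from Proposition~\ref{morif} to a positive multiple of the corresponding class $M_1^{a-1}M_2^bH$, $M_1^aM_2^{b-1}H$, $M_1^aM_2^b$. Pushforward preserves pseudoeffectivity, and $\widetilde p_*:\overline{\mathrm{Eff}}_1(\widetilde S)\to\overline{\mathrm{Eff}}_1(S)$ is surjective, because every curve on $S$ is the image of a curve on $\widetilde S$ ($\widetilde p$ is finite surjective). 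Therefore the three classes are pseudoeffective and they span $\overline{\mathrm{Eff}}_1(S)$ as a cone.

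\textbf{Step 3 (independence).} Pair the three classes against the Chern classes $c_1(\mathcal L)$, $M_1$, $M_2$ and form the $3\times3$ intersection matrix. Each entry can be computed upstairs via the projection formula $\pi_*(\pi^*_{\mathrm{sp}}a\cdot c)=a\cdot\pi_*c$, using the relations from the proof of Proposition~\ref{morif}: $\widetilde H^2=0$, $\widetilde M_1^{a+1}\widetilde M_2^b\widetilde H=0=\widetilde M_1^a\widetilde M_2^{b+1}\widetilde H$, and $\widetilde M_1^a\widetilde M_2^b\widetilde H>0$. The resulting matrix is triangular up to reordering with nonzero diagonal:
\begin{itemize}
\item $c_1(\mathcal L)$ kills the first two classes but not $M_1^aM_2^b$;
\item $M_1$ kills $M_1^aM_2^{b-1}H$ but not $M_1^{a-1}M_2^bH$;
\item $M_2$ is nonzero on $M_1^aM_2^{b-1}H$.
\end{itemize}
So the three classes are linearly independent, and by \cite[Corollary 3.22]{FL2} they generate $\overline{\mathrm{Eff}}_1(S)$.

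\textbf{Main obstacle.} The delicate point is Step~3 on the singular variety $S$, where intersection numbers must be interpreted through the Chern class action. We would justify each pairing by the projection formula, for example
\[
\deg\bigl(M_1\cap M_1^{a-1}M_2^bH\bigr)=\tfrac{1}{|\Gamma|}\deg\bigl(\widetilde p^{\,*}M_1\cdot\widetilde p^{\,*}_{\mathrm{sp}}(M_1^{a-1}M_2^bH)\bigr),
\]
and then track the constants $\lambda_j$ and $|\Gamma|$ to make sure no diagonal entry vanishes.
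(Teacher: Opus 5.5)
\textbf{Verdict: genuine gap, and it cannot be closed, because the statement as written is false under its own hypotheses.}

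You follow the paper's own route: special pullback, then $\widetilde p_*\circ\widetilde p^{\,*}_{\mathrm{sp}}=|\Gamma|\,\mathrm{Id}$, then independence. The computations in your Steps~1 and~3 are correct. The gap is what you import in Step~2 from Proposition~\ref{morif}, namely that $\widetilde M_1^{a}\widetilde M_2^{b}$ spans an extremal ray of $\overline{\mathrm{Eff}}_1(\widetilde S)$. This is false whenever both bundles are unstable, which is always the case under the hypotheses of the theorem.

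To see this, compute the two entries you left unspecified in your matrix. Write $\widetilde M_1=\widetilde\varepsilon_1^{\,*}\widetilde M_{q_1}$, where $\widetilde M_{q_1}$ is the class \eqref{mh} on $\Gr(q_1,\widetilde E_1)$. Note that $\widetilde M_2$ restricts to the Plücker class on every fibre $\Gr(q_2,\mathbb{C}^{k_2})$ of $\widetilde\varepsilon_1$. Use $\deg\xi^{a+1}=\frac{(a+1)q_1d_1}{k_1}\deg\widetilde E_1$ for $\xi=c_1(\mathcal O_{\Gr(q_1,\widetilde E_1)}(1))$, and $\widetilde\theta_1=q_1\,\mu(\widetilde E_1/\widetilde E_1^{\,l_1-1})$. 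Then
\[
\widetilde M_1\cdot\widetilde M_1^{a}\widetilde M_2^{b}\,=\,d_2\,\deg\bigl(\widetilde M_{q_1}^{\,a+1}\bigr)\,=\,(a+1)\,d_1d_2\,q_1\Bigl(\mu(\widetilde E_1)-\mu\bigl(\widetilde E_1/\widetilde E_1^{\,l_1-1}\bigr)\Bigr)\,>\,0,
\]
where $d_j$ is the Plücker degree of $\Gr(q_j,\mathbb{C}^{k_j})$. So $\widetilde M_{q_1}$ is nef and \emph{big}, not merely nef and non-ample, and its top power does not vanish. The same holds for $\widetilde M_2$.

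By the projection formula you propose, $M_1^aM_2^b$ therefore has positive degree against each of $M_1$, $M_2$ and $c_1(\mathcal L)$. It lies in the interior of $\overline{\mathrm{Eff}}_1(S)$, which is dual to the simplicial cone of Theorem~\ref{nef cone prod}, not on an extremal ray. Your independence check in Step~3 cannot detect this. The same slip sits in Lemma~\ref{lem:orbifold-mori}, on which both the paper's proof and your template rest: there ``nef but not ample'' is used to conclude $\widetilde M_q\cdot\alpha_q=\widetilde M_q^{\,n_q}=0$.

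\textbf{Smallest counterexample.} Take $\Gamma$ trivial, $X=Y=\mathbb{P}^1$ and $\widetilde E_1=\widetilde E_2=\mathcal O\oplus\mathcal O(-1)$. Then $q_j=a=b=1$, $\widetilde S=\mathbb{F}_1\times_{\mathbb{P}^1}\mathbb{F}_1$ and $\widetilde M_j=\xi_j+f$. Against $(\widetilde M_1,\widetilde M_2,\widetilde H)$, your three classes have intersection vectors $(1,0,0)$, $(0,1,0)$ and $(1,1,1)$. The curve $\sigma$ given by the two quotients $\widetilde E_j\to\mathcal O(-1)$ is effective with vector $(0,0,1)$, so it lies outside your cone.

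\textbf{General case.} Let $\widetilde\sigma\subset\widetilde S$ be the image of the $\Gamma$-equivariant section $Y\to\widetilde S$ defined by the Harder--Narasimhan quotients $\widetilde E_j\to\widetilde E_j/\widetilde E_j^{\,l_j-1}$. Then $\widetilde M_1\cdot\widetilde\sigma=\widetilde M_2\cdot\widetilde\sigma=0$ and $\widetilde H\cdot\widetilde\sigma=1$. So $\widetilde p_*[\widetilde\sigma]$ is effective on $S$, with $M_1$-degree $0$ and positive $c_1(\mathcal L)$-degree. Now set $A=M_1^{a-1}M_2^bH$, $B=M_1^aM_2^{b-1}H$ and $C=M_1^aM_2^b$, and consider any $xA+yB+zC$ with $x,y,z\ge 0$. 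If it has positive $c_1(\mathcal L)$-degree, then $z>0$, and hence its $M_1$-degree is positive. Therefore $\widetilde p_*[\widetilde\sigma]$ is not in the claimed cone.

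\textbf{Repair.} Replace $M_1^aM_2^b$ by $\widetilde p_*[\widetilde\sigma]$. On $\widetilde S$, the classes $\widetilde M_1^{a-1}\widetilde M_2^b\widetilde H$, $\widetilde M_1^a\widetilde M_2^{b-1}\widetilde H$ and $[\widetilde\sigma]$ are effective and, up to positive scalars, form the dual basis to the nef classes $\widetilde M_1,\widetilde M_2,\widetilde H$. Hence they span $\overline{\mathrm{Eff}}_1(\widetilde S)$. Your Step~2 argument (surjectivity of $\widetilde p_*$ on Mori cones) then gives
\[
\overline{\mathrm{Eff}}_1(S)=\bigl\langle M_1^{a-1}M_2^bH,\,M_1^aM_2^{b-1}H,\,\widetilde p_*[\widetilde\sigma]\bigr\rangle .
\]
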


\begin{proof}
As the special pullback map
\[
\widetilde{p}_{\textrm{sp}}^{\,*}
\,:\,
CH_{\ast}(S)
\,\longrightarrow\,
CH_{\ast}(\widetilde{S})
\]
is a ring homomorphism, it follows from \cite[Proposition 3.5]{BBM1}, that
\[
\widetilde{p}_{\textrm{sp}}^{\,*}(M_{1})
\,=\,
\lambda_{1}\,\widetilde{M}^{\prime}_{1},
\]
\[
\widetilde{p}_{\textrm{sp}}^{\,*}(M_{2})
\,=\,
\lambda_{2}\,\widetilde{M}^{\prime}_{2},
\]
and 
\[\widetilde{p}_{\textrm{sp
}}^{\,\ast}(H)\,=\, \widetilde{H}^{\prime},\]
where $ \widetilde{M}_{1}^{\prime}\, =\, c_{1}(\mathcal{O}_{\Gr(q_{1},\widetilde{E}_{1})}(1))-{\theta}_{1} c_{1}(\widetilde{L})$,\,  $\widetilde{M}_{2}^{\prime} \,=\, c_{1}(\mathcal{O}_{\Gr(q_{2},\widetilde{E}_{2})}(1)) - {\theta}_{2} c_{1}(\widetilde{L})$ and $ \widetilde{H}^{\prime}\,=\,c_{1}(\widetilde{L})$.
Therefore,
\[
\widetilde{p}_{\textrm{sp}}^{\,*}
\left(
M_{1}^{\,a-1}M_{2}^{\,b}H
\right)
\,=\,
\lambda_{1}^{\,a-1}\lambda_{2}^{\,b}\,
\widetilde{M}_{1}^{\prime\,a-1}
\widetilde{M}_{2}^{\prime \,b}
\widetilde{H},
\]
\[
\widetilde{p}_{\textrm{sp}}^{\,*}
\left(
M_{1}^{\,a}M_{2}^{\,b-1}H
\right)
\,=\,
\lambda_{1}^{\,a}\lambda_{2}^{\,b-1}\,
\widetilde{M}_{1}^{\prime \,a}
\widetilde{M}_{2}^{\prime \,b-1}
\widetilde{H},
\]
and
\[
\widetilde{p}_{\textrm{sp}}^{\,*}
\left(
M_{1}^{\,a}M_{2}^{\,b}
\right)
\,=\,
\lambda_{1}^{\,a}\lambda_{2}^{\,b}\,
\widetilde{M}_{1}^{\prime \,a}
\widetilde{M}_{2}^{\prime \,b}.
\]
In $N_{n_{q_{1}q_{2}}-1}(\widetilde{S})$ we have
\begin{equation*}
\widetilde{H}'
\,=\,
|\Gamma|\widetilde{H},\quad \widetilde{M}_{1}'
\,=\,
\widetilde{M}_{1},\quad \textrm{and}\quad \widetilde{M}_{2}'
\,=\,
\widetilde{M}_{2},
\end{equation*}
where $\widetilde{M}_{1}$, $\widetilde{M}_{2}$
and $\widetilde{H}$ are defined in $\eqref{m}$ and \eqref{h}. Moreover, in $N_{1}(\widetilde{S})$ we have
\[
\lambda_{1}^{\,a-1}\lambda_{2}^{\,b}\,
\widetilde{M}_{1}^{\,a-1}
\widetilde{M}_{2}^{\,b}
\widetilde{H}'
\,=\,
\lambda_{1}^{\,a-1}\lambda_{2}^{\,b}|\Gamma|\,
\widetilde{M}_{1}^{\,a-1}
\widetilde{M}_{2}^{\,b}
\widetilde{H},
\]
and
\[
\lambda_{1}^{\,a}\lambda_{2}^{\,b-1}\,
\widetilde{M}_{1}^{\prime \,a}
\widetilde{M}_{2}^{\prime \,b-1}
\widetilde{H}^{\prime}
\,=\,
\lambda_{1}^{\,a}\lambda_{2}^{\,b-1}|\Gamma|\,
\widetilde{M}_{1}^{\,a}
\widetilde{M}_{2}^{\,b-1}
\widetilde{H}.
\]
Thus all three classes
\[
M_{1}^{\,a-1}M_{2}^{\,b}H,
\quad
M_{1}^{\,a}M_{2}^{\,b-1}H,
\quad \textrm{and}\quad 
M_{1}^{\,a}M_{2}^{\,b}
\]
pull back to positive multiples of effective classes lying on the boundary of the Mori cone of $\widetilde{S}$. Now, using the same idea as in the proof of Theorem \ref{thm:mori-parabolic}, the proof of the theorem follows.
\end{proof}


\begin{thebibliography}{999999999}
\bibitem[Bi1]{Bi1}Biswas, I.:
Parabolic bundles as orbifold bundles.
\emph{Duke Math. J.} \textbf{88} (1997), 305--325.
\bibitem[Bi2]{Bi2} Biswas, I.: Connections on a parabolic principal bundle, II.  \emph{Canadian Math. Bull} \textbf{52} (2009), 175--185.
\bibitem[Br]{Br}
Brion, M.:
Lectures on the geometry of flag varieties. \url{https://doi.org/10.48550/arXiv.math/0410240}, (2004).


\bibitem[BBM1]{BBM1}
Bansal, A., Biswas, I., Majumder, S.:
On Positive Cones of Finite Normal Quotients of a Smooth Variety.
arXiv preprint, arXiv:2606.29803, 2026.


\bibitem[BBM2]{BBM2} Bansal, A., Biswas, I., Majumder, S.: Positive cones of the projectivization of a parabolic bundle over a curve. arXiv  preprint, arXiv:2506.17594, 2025.


\bibitem[BBN]{BBN}
Balaji, V., Biswas, I., Nagaraj, D. S.:
Ramified \(G\)-bundles as parabolic bundles.
\emph{J. Ramanujan Math. Soc.} \textbf{18} (2003), 123--138.

\bibitem[BL]{BL}
Biswas, I., Laytimi, F.:
Parabolic \(k\)-ample bundles.
\emph{Internat. J. Math.} \textbf{22} (2011), 1647--1660.

\bibitem[BP]{BP}
Biswas, I., Parameswaran, A. J.:
Nef cone of flag bundles over a curve.
\emph{Kyoto J. Math.} \textbf{54}  (2014), 353--366 .

\bibitem[BHP]{BHP}
Biswas, I., Hogadi, A., Parameswaran, A. J.:
Pseudo-effective cone of Grassmann bundles over a curve.
\emph{Geom. Dedicata} \textbf{172} (2014), 69--77.

\bibitem[BHNN]{BHNN}
Biswas, I., Hanumanthu, K., Nagaraj, D. S., Newstead, P. E.:
Seshadri constants and Grassmann bundles over curves.
\emph{Ann. Inst. Fourier} \textbf{70} (2020), 1477--1496.


\bibitem[DELV]{DELV}
Debarre, O., Ein, L., Lazarsfeld, R., Voisin, C.:
Pseudoeffective and nef classes on abelian varieties.
\emph{Compos. Math.} \textbf{147} (2011), 1793--1818.

\bibitem[Flt]{Fu}
Fulton, W.:
\emph{Intersection Theory}, 2nd edition,
Springer-Verlag, Berlin, 1998.

\bibitem[FL1]{FL1}
Fulger, M., Lehmann, B.:
Morphisms and faces of pseudo-effective cones.
\emph{Proc. London Math. Soc.} \textbf{112} (2016), 651--676.

\bibitem[FL2]{FL2}
Fulger, M., Lehmann, B.:
Positive cones of dual cycle classes.
\emph{Algebraic Geometry} \textbf{4} (2017), 1--28.
\bibitem[Flg]{Ful}
Fulger, M.:
The cones of effective cycles on projective bundles over curves.
\emph{Math. Z.} \textbf{269} (2011), 449--459.
\bibitem[KMR]{KMR}
Karmakar, R., Misra, S., Ray, N.:
Nef and pseudoeffective cones of the product of projective bundles over a curve.
\emph{Bull. Sci. Math.} \textbf{151} (2019), 1--12.
\bibitem[La]{La}
Lazarsfeld, R.:
\emph{Positivity in Algebraic Geometry I: Classical Setting},
Springer-Verlag, Berlin, 2004.

\bibitem[Mi]{Mi}
Miyaoka, Y.:
The Chern classes and Kodaira dimension of a minimal variety.
In: \emph{Algebraic Geometry}, Sendai, 1985,
\emph{Adv. Stud. Pure Math.} \textbf{10}  (1987), 449--476.
\bibitem[MS]{MS} Mehta, V.B., Seshadri, C.S.: Moduli of vector bundles on curves with parabolic structure. \textit{Math. Ann.} \textbf{248} (1980), 205--239.
\bibitem[MY]{MY}
Maruyama, M., Yokogawa, K.:
Moduli of parabolic stable sheaves.
\emph{Math. Ann.} \textbf{293} (1992), 77--99.
\bibitem[Se]{Se}
Seshadri, C. S.:
Moduli of vector bundles on curves with parabolic structures.
\emph{Bull. Amer. Math. Soc.} \textbf{83} (1977), 124--126.

\bibitem[SP]{SP}
The Stacks Project Authors:
\emph{Stacks Project}.
\url{https://stacks.math.columbia.edu} (2018).
\bibitem[MR]{MR}
Misra, S., Ray, N.:
Slope semistability and positive cones of Grassmann bundles.
\emph{J. Algebra Appl.} \textbf{23} (2024).
\bibitem[Yo]{Yo}
Yokogawa, K.:
Infinitesimal deformation of parabolic Higgs sheaves.
\emph{Int. J. Math.} \textbf{6} (1995), 125--148.
\end{thebibliography}
\end{document}